\algrenewcommand\textproc{}
\newtheorem{theorem}{Theorem}[section]
\newtheorem{proposition}[theorem]{Proposition}
\newtheorem{dyn}[theorem]{Dynamics}
\newtheorem{corollary}[theorem]{Corollary}
\newtheorem{lemma}[theorem]{Lemma}
\newtheorem{assumption}{Assumption}
\newtheorem{remark}[theorem]{Remark}
\numberwithin{equation}{section}
\theoremstyle{definition} 
\theoremstyle{definition} 
\newcommand{\Zset}{\mathbb{V}}
\newcommand{\Zbarset}{\overline{\mathbb{V}}}
\newcommand{\Z}{\nu}
\newcommand{\Zbar}{\bar{\Z}}
\newcommand{\Zhat}{\chi}
\newcommand{\Zbarhat}{\bar{\Zhat}}
\newcommand{\Zhatset}{\mathbb{W}}
\newcommand{\Zbarhatset}{\overline{\Zhatset}}
\newcommand{\eg}{\textit{e.g. }}
\newcommand{\minus}{\scalebox{0.35}[1.0]{$-$}}
\newcommand{\correct}[1]{\textcolor{black}{#1}}
\algrenewcommand\algorithmicrequire{\textbf{Input:}}
\algrenewcommand\algorithmicensure{\textbf{Output:}}
\numberwithin{equation}{section}
\begin{document}
\renewcommand{\labelenumii}{(\roman{enumi}).\alph{enumii}}

\title[A spinal construction for general type-space populations with interactions]
{Spinal constructions for continuous type-space branching processes with interactions}

\author{Charles Medous}
\address{\parbox{\linewidth}{Charles Medous, Univ. Grenoble Alpes, CNRS, IF, 38000 Grenoble, France;}}
\address{\parbox{\linewidth}{Univ. Grenoble Alpes, INRIA, 38000 Grenoble, France}}
\email{charles.medous@univ-grenoble-alpes.fr}

\begin{abstract}
We consider branching processes describing structured, interacting  populations in continuous time. Dynamics of each individual's characteristics and branching properties can be influenced by the entire population. We propose a \correct{Girsanov-type result based on a spinal construction, and establish a many-to-one formula}. By combining this result with the spinal decomposition, we derive a \correct{generalized} continuous-time version of the Kesten-Stigum theorem that incorporates interactions. Additionally, we propose an alternative approach of the spine construction for exact simulations of stochastic size-dependent populations.
 \end{abstract}
\maketitle
 
%\vspace{0.2in}

%\noindent {\sc Key words and phrases}: non-local branching processes, interacting populations, spine, Many-to-One formula, Kesten-Stigum theorem, Stochastic algorithm

%\bigskip

%\noindent MSC 2000 subject classifications: 60J80, 60J85, 60H10.

%\vspace{0.5cm}

%\tableofcontents
\addtocontents{toc}{\setcounter{tocdepth}{-10}}
%%%%%%%%%%%%%%%%%%%%%%%%%%%%%%%%%%%%%%%%%%%%%%%%%%%%%%%%%%%%%%%%%%%

\section*{Introduction}
\label{section: intro}

Spine techniques and spinal trees are classical tools in the general context of branching processes since the work of  Kallenberg \cite{Kallenberg77}, Chauvin and Rouault \cite{Chauvin88,Chauvin91}, and later Kurtz, Lyons, Pemantle and Peres \cite{LPP95,Lyons97,KLPP97}. 
Spinal trees are constructed based on an original branching process by distinguishing a lineage, called the spine. Its only living representative- the spinal individual- follows a biased reproduction law compared to the other individuals in the process, ensuring that the spine does not die out. In the specific yet widely studied case of size-biased trees, the reproductive law $(\widehat{p}_n,n\geq 0)$ of the spinal individual is defined by
$$
\widehat{p}_n = \frac{np_n}{m}, \quad \textrm{for} \ n\geq 0,
$$
where $m$ is the mean value of the law of reproduction $(p_n,n\geq 0)$ in the branching process. This size-biased reproductive law of the spinal individual is closely related to the biased ancestral reproduction \cite{Chauvin91,Georgii03}. In fact the spine was found to characterize  the process of the trait of a uniformly sampled individual, in a large population approximation see \eg \cite{Marguet19}. The sampling of $k\geq 2$ distinct individuals from those living in a population at a time $t$ is associated with a $k$-spines construction. For further literature on multiple-particles sampling we refer the reader to \cite{harris2017many, harris2020coalescent,johnston2019coalescent,cheek2023ancestral} and the references herein.

Many-to-One formulas \cite{harris1996large, Georgii03} are prominent among classical spine results. Such formulas \correct{are derived} from a Girsanov-type result on the change of probability measure associated with the spine, which can be regarded as a Doob's transform, as described in \cite{chetrite2015nonequilibrium, Athreya00} for instance. These formulas give expectations of sums over particles in the branching process in terms of a Feynman-Kac path integral expectation related to the spinal individual. Consequently, the spinal individual is often referred to as a "typical individual" within the population. The connection to Feynman-Kac path integrals implies a shared foundation between these concepts. For a comprehensive overview on this subject, we refer to \cite{Delmoral04}. 

Another interesting property of spinal constructions is the "spinal decomposition" \cite{Chauvin88, LPP95}. It establishes that the spine process is equivalent to the initial branching process, with the removal of one individual and the addition of an immigration source. The introduction of new individuals into the population follows the biased reproductive law specific to the spine. In recent decades, the spinal decomposition has emerged as a highly valuable tool for investigating branching processes. One of its notable contributions is providing a new proof of the $L\log L$ criteria, which were originally proved by Kesten and Stigum for Galton-Watson (GW) processes \cite{KS66} and by Biggins for continuous-time branching processes \cite{Biggins77} using analytical methods. These results give specific conditions on the reproductive law, ensuring the non-degeneracy of the martingale involved in the spinal change of measure at infinity.
By combining the spinal decomposition with a previously known result from measure theory, Lyons, Pemantle, and Peres \cite{LPP95} provided a probabilistic "conceptual" proof of Kesten and Stigum's theorem for single-type GW processes. This method proved to be easily generalizable to continuous-time structured branching processes \cite{Athreya00,Georgii03,biggins2004measure} as well as branching Brownian motions \cite{kyprianou2004travelling,englander2007branching,git2007exponential}.
More recently Bertoin and Mallein extended this proof for general branching Levy processes \cite{bertoin2018biggins}. 
For equivalent results on superprocesses we refer to \cite{Liu09, Liu11, Liu13, eckhoff2015spines,ren2016spine}. 
Finally, we mention Hardy and Harris \cite{Hardy09} who adapted the spinal decomposition to prove the $\mathcal{L}^p$-convergence of some key martingales, which was later used to establish strong laws of large numbers \cite{englander2010strong}.

The fundamental assumption in the aforementioned works is the branching property, which assumes that the behavior of all particles in the process during their life is independent of one another. However, in various systems of population dynamics such as genetics, epidemiology, chemistry, and even queueing systems, interactions between individuals do occur and this fundamental hypothesis falls apart. 
Recently, Bansaye \cite{B21} established a spine construction and Many-to-One formulas for interacting branching populations, where the branching rates and reproductive laws depend on the traits of all individuals in the population. These traits belong to a finite set and are fixed during the life of the individuals. Using the spinal decomposition, Bansaye has found an $L\log L$ criterion for a single-type, density-dependent population. We also mention a recent work on spine processes for density-dependent individual-based models in a large population approximation \cite{henry2023time}.
 
In this article we consider a wide class of continuous-time structured branching processes with general interactions. These processes are used to model structured populations, where the behavior of each individual is influenced by the overall population state. Every individual in the population is characterized by its trait, taking values in a compact subset of $\mathbb{R}^d$. The lifespan of each individual is exponentially distributed with a time-inhomogeneous rate that depends on the traits of all individuals. Upon an individual's death, a random number of children are generated, each inheriting random traits at birth, that are influenced by the traits of all individuals in the population. Between these branching events, the evolution of the traits of all individuals in the population is deterministic and also influenced by the entire population's state, \correct{extending the work of Bansaye in \cite{B21} to include many physical models}. Notice that the branching parameters are determined by the traits of all individuals, thereby  the branching property no longer holds in this framework.
We introduce a comprehensive $\psi$-spinal construction for those processes, using a change of measure associated with a positive weight function $\psi$ that depends on the trait of the spinal individual and of those of every individual. For a fixed function $\psi$ both the spinal individual and those outside the spine are subject to a bias. 
We derive a Girsanov-type formula associated with this change of measure, taking the form of a path-integral formulation that involves a non-linear operator. A classical approach to establishing limiting results, such as the central limit theorem or large deviations, involves determining the eigenfunctions of such operators \cite{biggins2004measure,englander2007branching,Cloez17}. However, due to the presence of interactions, this operator is contingent on the entire population, necessitating the eigenfunctions to be dependent on the traits of all individuals. Thus, the weight function $\psi$ must rely on both the trait of the spinal individual and the traits of all individuals in order to account for this dependency. 
Under certain non-explosion assumptions regarding the branching parameters and the set of weight functions, we obtain a modified Many-to-One formula. Unlike the classical Many-to-One formula- that describes the behavior of the branching process using only the behavior of the spine- our formula relies on the whole spinal population. 

Subsequently, we use this result in conjunction with the spinal decomposition to establish a generalized $L\log L$ criteria for processes with interactions, \correct{generalizing the work of \cite{B21} for non constant functions $\psi$}. More precisely, we exhibit both a sufficient condition and a necessary condition for the non-degeneracy at infinity of the additive martingale associated with the $\psi$-spinal change of measure, for a large set of $\psi$ functions.

\correct{Finally we propose an alternative use of the spinal construction, where the function $\psi$ is not chosen as a an eigenfunction but to simplify the dynamics of the associated spine process.} As an example we study a particular case of structured Yule process with  mass loss events happening at size-dependent rates. Yule processes are pure birth processes that are widely used in population genetics to model and reconstruct phylogenetic trees, see \textit{e.g.} Aldous' review \cite{Aldous01}. We use the spinal construction with a multiplicative weight function to retrieve a conditional branching property in the associated spine process, and propose an efficient algorithmic construction based on this property.
\\
\textbf{Notation.}
In the sequel $\mathbb{N^*} = \{1,2,\cdots\}$ will denote the set of positive integers, $\mathbb{R}_+:=[0,+\infty)$ the real line, $\overline{\mathbb{R}}_+:= \mathbb{R}_+\cup\{+\infty\}$ and $\mathbb{R}_+^*:=(0,+\infty)$. We will denote respectively by $\mathfrak{B}\left(A,B\right)$ (resp. $\mathcal{C}^i\left(A,B\right)$) the set of measurable (resp. $i$ times continuously differentiable) $B$-valued functions on a set $A$. For every couple $(f,g)$ of real-valued measurable functions on a set $A$, we denote for all $x$, $fg(x)$ the product $f(x)g(x)$. 

The set of trait $\mathcal{X}$ is a \correct{subset} of $\mathbb{R}^d$ \correct{and for all $(x,y) \in \mathcal{X}^2$, we will denote $x\cdot y$ and $\vert x\vert$ respectively the canonical scalar product and the $\ell_1$-norm on $\mathbb{R}^d$}. 

\correct{We introduce the Ulam-Harris-Neveu notations \cite{Neveu86} to label the individuals of the population:}
$$\mathcal{U} := \left\{\emptyset\right\} \cup \bigcup_{k \geq 0}\left(\mathbb{N}^*\right)^{k+1}.$$
We consider branching processes starting from multiple initial individuals, thus the root $\emptyset$ will be treated as a phantom individual and its direct descendants will be the ancestor generation. 
For two elements $u,v$ of $\mathcal{U} \backslash \left\{\emptyset\right\}$, there exist two positive integers $n,p$ such that $u = (u_1, \ldots, u_n)$ and $v = (\correct{v_1},\ldots, v_p)$ and we write $uv:=(u_1,\ldots,u_n,v_1,\ldots,v_p)$ the concatenation of $u$ and $v$. We identify both $\emptyset u$ and $u \emptyset$ with $u$.
An individual $v \in \mathcal{U}$ is a descendant of $u$ if there exists $w \in \mathcal{U}$ such that $v=uw$. In this case we denote $u \preceq v$.

Let us introduce $\Zbarset$ the set composed of all finite point measures on $\mathcal{U}\times\mathcal{X}$
\begin{equation*}%\label{def: Z barre space}
\Zbarset := \left\{\sum_{i=1}^{N} \delta_{(u^i,x^i)},\  N \in \mathbb{N}, \ \left(u^i, 1 \leq i \leq N\right) \in  \mathcal{U}^{N}, \  \left(x^i, 1 \leq i \leq N\right) \in  \mathcal{X}^{N}   \right\}.
\end{equation*}
We also define the set of marginal population measures, that is
\begin{equation*}%\label{def: Z space}
\Zset := \left\{\sum_{i=1}^N \delta_{x^i},\ N \in \mathbb{N}, \ \left(x^i, 1 \leq i \leq N\right) \in  \mathcal{X}^{N} \right\}.
\end{equation*}
For any measure $\bar{\nu} = \sum_{i=1}^{N} \delta_{(u^i,x^i)}$ in $\Zbarset$, we will write $\nu := \sum_{i=1}^{N} \delta_{x^i}$ its projection on $\Zset$. By convention, if the number of points in the measure is $N=0$, $\bar{\nu}$ and $\nu$ are the trivial zero measures on $\mathcal{U}\times\mathcal{X}$ and $\mathcal{X}$.
We introduce for every $\bar{\nu} \in \Zbarset$, every $g \in \mathfrak{B}\left(\mathcal{U}\times \mathcal{X},\mathbb{R}\right)$ and every $f\in \mathfrak{B}\left(\mathcal{X},\mathbb{R}\right)$  
\begin{equation*}%\label{def: bracket}
\langle \bar{\nu}, g \rangle := \int_{\mathcal{U} \times \mathcal{X}}g(u,x)\bar{\nu}(\textrm{d}u,\textrm{d}x), \ \textrm{ and } \ \langle \nu, f \rangle := \int_{\mathcal{X}}f(x)\nu(\textrm{d}x).
\end{equation*}
Finally, we denote by $\mathbb{D}\left(A,B\right)$ the Skorohod space of $B$-valued càdlàg functions on a subset $A$ of $\mathbb{R}_+$. 
For every process $\left(\Z_t, t \in A \right) \in \mathbb{D}\left(A,B\right)$ and $x \in B$, we will denote 
\begin{equation*}%\label{def: init}
\mathbb{E}_x\left[f\left(\Z_t\right) \right] := \mathbb{E}\left[ f\left(\Z_t\right) \vert X_0=x\right] \correct{.}
\end{equation*}

\section{Definition of the population}
\label{section: model}
In this section we describe informally the population process. In Section \ref{section: proof model} we give a rigorous definition as a strong solution of a stochastic differential equation (SDE).

The population is described at any time $t \in \mathbb{R}_+$ by the finite point measure $\Zbar_t \in \Zbarset$ giving the label and the trait of every individual living in the population at this time. We introduce 
\begin{equation*}%\label{def: G(t)}
\mathbb{G}\left( t\right) := \left\{u \in \mathcal{U}: \int_{\mathcal{U}\times\mathcal{X}}\mathbbm{1}_{\{v=u\}}\Zbar_t\left(\textrm{d}v,\textrm{d}x\right) \correct{\geq 1} \right\}
\end{equation*} 
the set of labels of living individuals at time $t$. For every individual labeled by $u \in \mathbb{G}\left( t\right)$ we \correct{denote by} $X^u_t$ its trait, and with a slight abuse of notation, $X^u_s$ will denote the trait of its unique ancestor living at time $s \in [0,t]$.
%The stochastic point process $\left(\Zbar_t, t\geq 0 \right) \in \mathbb{D}\left(\mathbb{R}_+,\Zbarset\right)$ describing the evolution of the population and its associated marginal process are given, for every $t \geq 0$, by
Using these notations, we can write 
\begin{equation*}
\Zbar_t = \sum_{u \in \mathbb{G}\left( t\right)}\delta_{\left(u,X^u_t\right)} \quad \textrm{and} \quad \Z_t := \sum_{u \in \mathbb{G}\left( t\right)}\delta_{X^u_t}.
\end{equation*}
%Let us remark that at all time $t$, $\Zbar_t$ encodes the trait and lineage of every living individual, and its marginal on $\Zset$ keeps the information on the trait of each individual only.

The initial population is given by a measure $\bar{z} = \sum_{i=1}^{N}\delta_{\left(u^i,x^i\right)}$. 
During their lives, the traits of the individuals in the population evolve according to population-dependent dynamics. We introduce $\mu\in\mathfrak{B}(\mathcal{X} \times \Zset \times \mathbb{R}_+,\mathcal{X})$, such that, in a population $\Zbar_t$ at time $t$, for all $u \in \mathbb{G}\left( t\right)$
\begin{equation*}
\frac{\textrm{d}X^u_t}{\textrm{d}t} = \mu\left(X^u_t,\Z_t,t\right).
\end{equation*}
The branching rate is given by a function $B\in\mathfrak{B}(\mathcal{X} \times \Zset \times \mathbb{R}_+,\mathbb{R}_+)$. Thus, an individual with trait $x$ in a population $\Z_t$ at time $t$ dies at an instantaneous rate $B\left(x,\Z_t,t\right)$. 

It produces an offspring of $n$ individuals, where $n$ is randomly chosen with distribution $\left(p_{k}\left(x,\Z_t,t\right), k \in \mathbb{N}\right)$ of first moment $m\left(x,\Z_t,t\right)$. Thus branching events that lead to $n$ children happen at rate $B_n(\cdot) := B(\cdot)p_n(\cdot)$. If $n\geq 0$, we denote by  $\boldsymbol{y} = \left(y^1, \cdots,y^n\right)\in\mathcal{X}^n$ the offspring traits at birth, randomly chosen according to the law $K_n(x,\Z_t,t,\cdot)\mathcal{M}_n(\text{d}\cdot)$, where $K_n(x,\Z_t,t,\cdot)\in\mathfrak{B}(\mathcal{X}^n,\mathbb{R}_+)$ and $\mathcal{M}_n(\cdot)$ is a finite measure on $\mathcal{X}^{\mathbb{N}}$ such that, for all $A\in \mathcal{X}^{\mathbb{N}}$, 
$\mathcal{M}_n\left(\pi_{\{i\geq n+1\}}A\right) = 0$, where $\pi_{\{i\geq n+1\}}$ denotes the projection map on the set ${\{i \in \mathbb{N}^*, i\geq n+1\}}$.
For example, $\mathcal{M}_n(\cdot)$ could be the product measure of the Lebesgue measure on $\mathcal{X}^{n}$ and the null measure $z(\cdot)$ for the other components of the vector $\mathbf{y} \in \mathcal{X}^{\mathbb{N}}$, such that
$$
\mathcal{M}_n(\text{d}\mathbf{y}) := \bigotimes_{i=1}^n\text{d}y^i\bigotimes_{i\geq k+1}z(\text{d}y^i).
$$
Another classical example is the perfect cellular division. An individual of trait $x$ gives birth to $2$ children of same size $x/2$. To handle this case we have to slightly modify the formalism and consider that the random vector of trait is given by $\mathbf{Y} = (x\Theta,x\Theta)$ where the law of $\Theta$ is given by $K_2(x,\Z,t,\theta)\mathcal{M}_2(\text{d}\theta) = \delta_{1/2}(\text{d}\theta)$. More generally we can introduce a measurable function $F_{x,\Z,t}$ such that $F_{x,\Z,t}\left(\mathbf{y}\right) = \boldsymbol{\theta}$ and $\boldsymbol{\theta}$ is chosen according to the law $K_n(x,\Z_t,t,\cdot)\mathcal{M}_n(\text{d}\cdot)$. That being said, we will stick to the previously introduced formalism.

The labeling choice for these children is arbitrary yet necessary to uniquely define a stochastic point process in $\mathbb{D}\left(\mathbb{R}_+,\Zbarset\right)$. Here, for a parent individual of label $u$, for all $1 \leq i \leq n$, the $i$-th child is labeled $ui$ and its trait is $y^i$, the $i$-th coordinate of the vector $\boldsymbol{y}$. 

We \correct{denote by} $T_{\textrm{Exp}} \in\overline{\mathbb{R}}_+$ the explosion time of the process $\left(\Zbar_t, t\geq 0 \right)$, defined as the limit of its jumps times $(T_k, k\geq 0)$.  In order to ensure the non-explosion of this process in finite time we introduce the following set of \correct{hypotheses}. 
\begin{assumption}\label{assumption A}
We consider the following assumptions:
\begin{enumerate}
\item There exists $\mu_0\in\mathcal{C}(\mathbb{R}_+,\mathbb{R}_+)$, such that for all $\left(x,\Z,t\right) \in \mathcal{X}\times\Zset\times\mathbb{R}_+$
\begin{equation*}
\vert\mu\left(x,\Z,t\right)\vert \leq \mu_0(t)\left(1+ \vert x \vert + \left\vert\frac{\int_\mathcal{X}x\Z(\text{d}x)}{\langle \Z, 1 \rangle}\right\vert \right),
\end{equation*} 
\item For all $\left(x,\Z,t\right) \in \mathcal{X}\times\Zset\times\mathbb{R}_+$, $
B_1\left(x,\Z,t\right) <+ \infty.$
\item There exists $b_0\in\mathcal{C}(\mathbb{R}_+,\mathbb{R}_+)$, such that for all $\left(x,\Z,t\right) \in \mathcal{X}\times\Zset\times\mathbb{R}_+$
\begin{equation*}
\sum_{n \neq 1}nB_n\left(x,\Z,t\right) \leq b_0(t)\left(1+\vert x\vert\right).
\end{equation*}
\item For all $\left(x,\Z,t,n\right) \in \mathcal{X}\times\Zset\times\mathbb{R}_+\times \mathbb{N}^*$,
\begin{equation*}
K_n\left(x,\Z,t,\mathcal{A}_{n}(x)\right) = 0,\quad \textrm{where } \ \mathcal{A}_{n}(x) := \bigg\{ (y^i, 1\leq i \leq n)\in \mathcal{X}^n: \ \ \sum_{i=1}^n \vert y^i\vert > \vert x \vert \bigg\}.
\end{equation*}
\end{enumerate}
\end{assumption}
The growth rate of the traits of individuals is bounded in the first hypothesis by an exponential growth rate controlled by the trait of each individual and the mean trait in the population. The second point ensures that events that do not change the number of individuals do not accumulate in finite time. \correct{The third hypothesis uniformly controls the minimum lifetime of an individual and is used in the proof of Proposition \ref{prop: unique original} to control the first moment of the total mass}. This hypothesis, together with the first one, ensures that the lifespan of each individual decreases exponentially at most with its trait. Note that this assumption does not constrain the function $B_1(\cdot)$. The last hypothesis restricts the framework under consideration to fragmentation processes that do not create matter or energy. 
This set of hypotheses is sufficiently large to cover a large portion of models in physics and ecology, exponential growth being a classical assumption in many stochastic models in ecology and evolution, see \textit{e.g.}  \cite{REES2018121, cohen1995population}.

\begin{proposition}\label{prop: unique original}
Under Assumption \ref{assumption A}, the sequence $(T_k, k\geq 0)$ of jumps times of the process $\left(\Zbar_t, t\geq 0\right)$ tends to infinity almost surely.
\end{proposition} 
We can thus conclude, following the proof of Theorem 2.1 in \cite{Marguet19}, that under this set of hypotheses, the process $\left(\Zbar_t, t\geq 0\right)$ is uniquely defined on $\mathbb{R}_+$.
However, the spinal construction introduced in Section \ref{section: Many-to-One formula} can be established with less restrictive hypotheses. In this case, accumulation of jumps times may happen in finite time and the spinal construction holds until this explosion time.

\section{Results}
\label{section: Many-to-One formula}

In this section, we consider the law of a randomly sampled individual in the general branching population described in Section \ref{section: model}. Our main result gives the appropriate change of measure linking this distribution at time $t$ to the trajectory of an auxiliary process until this time. We then explicit this auxiliary process as a spinal construction. 

The spinal construction generates a $\Zbarset$-valued process along with the label of a distinguished individual that can change with time. \correct{When there is no possible confusion, the label of the spinal individual at any time will be denoted $e$, and $x_e$ its trait}. For convenience, we will \correct{denote by} $\Zbarhatset$ and $\Zhatset$ the sets such that
\begin{equation*}\label{Z hat space}
\Zbarhatset := \left\{\left(e,\Zbar\right) \in \mathcal{U} \times \Zbarset: \ \langle \Zbar, \mathbbm{1}_{\{ e\}\times\mathcal{X}} \rangle\geq 1  \right\}, \quad \textrm{and } \quad  
\Zhatset := \left\{\left(x,\Z\right) \in \mathcal{X} \times \Zset: \ \langle \Z, \mathbbm{1}_{\{ x\}} \rangle \geq 1  \right\}.
\end{equation*}
Thus, the spine process is a $\Zbarhatset$-valued branching process and its marginal is a $\Zhatset$-valued branching process.
We propose here a generalized spinal construction, where branching rates are biased with weight functions chosen in a set $\mathcal{D}$, defined by
\begin{equation}\label{def: D space}
\mathcal{D} := \left\{F_f \in \mathfrak{B}\left(\Zhatset\times\mathbb{R}_+,\mathbb{R}_+\right)\ \textit{s.t.} \  \left(f,F\right) \in \mathcal{C}^1\left(\mathcal{X}\times\mathbb{R}_+,\mathbb{R}\right)\times \mathcal{C}^1\left(\mathcal{X}\times\mathbb{R}\times\mathbb{R}_+,\mathbb{R}_+^*\right) \right\},
\end{equation}
where for every $\left(x,\Z,t\right) \in \Zhatset\times\mathbb{R}_+, \ F_f(x,\Z,t) := F(x,\langle\Z,f(\cdot,t)\correct{\rangle},t)$. \correct{In order to alleviate the notations when there is no ambiguity, we will omit the subscript $f$.}

In the following, for every $(u,\Zbar) \in \Zbarhatset$ we denote \correct{by} $x_u$ the trait of the individual of label $u$ in the population $\Zbar$, and for every $n\geq 0$ and every $\boldsymbol{y}=(y^i,1\leq i \leq n) \in \mathcal{X}^n$ we write
\begin{equation}\label{nu+}
\Zbar_+(u,\boldsymbol{y}) := \Zbar -\delta_{(u,x_u)} + \sum_{i=1}^{n} \delta_{(ui,y^i)}, \quad \textrm{and} \quad \Z^+(x,\boldsymbol{y}) := \Z -\delta_{x} + \sum_{i=1}^{n} \delta_{y^i}.
\end{equation}

We introduce the key operator $\mathcal{G}$ involved in the spinal construction. It is defined for all $F \in \mathcal{D}$ and $(x,\Z,t) \in \Zhatset\times \mathbb{R}_+$ by
\begin{multline}\label{def: mathcal G}
\mathcal{G}F(x,\Z,t) := GF\left(x,\Z,t\right) \\ + \sum_{n\geq 0} \Bigg\{ B_n(x,\Z,t)\int_{\mathcal{X}^n} \bigg[ \sum_{i=1}^n F \left(y^i, \Z^+(x,\boldsymbol{y}),t\right) 
- F\left(x, \Z^+(x,\boldsymbol{y}),t\right)\bigg] K_n\left(x,\Z,t,\boldsymbol{y}\right)\mathcal{M}_n(\text{d}\boldsymbol{y})  \\
  + \int_{\mathcal{X}} B_n(\mathfrak{x},\Z,t)\int_{\mathcal{X}^n} \big[ F\left(x, \Z^+(\mathfrak{x},\boldsymbol{y}),t\right)- F(x,\Z,t) \big] K_n\left(\mathfrak{x},\Z,t,\boldsymbol{y}\right)\mathcal{M}_n(\text{d}\boldsymbol{y})\Z(\textrm{d}\mathfrak{x})\Bigg\},
\end{multline}
where the operator $G$ is the generator of the deterministic evolution of the traits between branching events, given for every $F \in \mathcal{D}$, and $(x,\Z,t) \in \Zhatset\times \mathbb{R}_+$ by 
\begin{multline}\label{def: G}
GF\left(x,\Z,t\right) := D_1F\left(x,\langle \Z,f(\cdot,t)\rangle,t\right)\cdot\mu\left(x,\Z,t\right) + D_3F\left(x,\langle \Z,f(\cdot,t)\rangle,t\right)\\ +D_2F\left(x,\langle\Z,f(\cdot,t)\rangle,t \right)\int_{\mathcal{X}}\left[\frac{\partial f}{\partial x}(\mathfrak{x},t)\cdot\mu\left(\mathfrak{x},\Z,t\right) + \frac{\partial f}{\partial t}(\mathfrak{x},t)\right]\Z(\textrm{d}\mathfrak{x}),
\end{multline}
where $D_iF(\cdot,\cdot)$ denotes the derivative of the function $F \in \mathcal{C}^1(\mathcal{X}\times\mathbb{R}\times \mathbb{R}_+,\mathbb{R}_+^*)$ with respect to $i$-th variable.
Rigorously, the previously introduced objects are families of operators $(\mathcal{G}_{x,\Z,t}, x,\Z,t\in \mathcal{X}\times\Zset\times\mathbb{R}_+)$ and $(G_{x,\Z,t}, x,\Z,t \in \mathcal{X}\times\Zset\times\mathbb{R}_+)$. This abuse of notation will be used for the operators subsequently introduced in this work.

Note that the operator $\mathcal{G}$ is generally not the generator of a conservative Markov process on $\Zhatset$. Indeed $\mathcal{G}1\correct{(x,\Z,t)} = B(x,\Z,t)(m(x,\Z,t)-1)$, which is non-zero if there exists $x,\Z,t\in \mathcal{X}\times\Zset\times\mathbb{R}_+$ such that the mean number of children $m\left(x,\Z,t\right) \neq 1$. We point out that these operators are the counterparts- for interacting, structured,    branching populations- of the Schrödinger operator $\mathcal{G}$ introduced in \cite{Cloez17}. To ensure that the functions $\mathcal{G}F$ are finite on $\Zhatset\times \mathbb{R}_+$, we make an assumption on the functions $F \in \mathcal{D}$.
\begin{assumption}\label{assumption B}
Let $(p_n, n\in\mathbb{N})$ and $(K_n, n\in\mathbb{N})$ be the reproduction parameters of the original branching process and $F \in \mathcal{D}$ a weight-function. For all $(x,x_e,\Z,t) \in \mathcal{X}\times\Zhatset\times\mathbb{R}_+$, 
\begin{multline*}
\sum_{n \in\mathbb{N}}p_n(x,\Z,t)\int_{\mathcal{X}^n}F\left(x_e,\Z^+(x,\boldsymbol{y}),t\right)K_n\left(x,\Z,t,\boldsymbol{y}\right)\mathcal{M}_n(\text{d}\boldsymbol{y}) \\
+ \sum_{n \in\mathbb{N}}p_n(x,\Z,t)\int_{\mathcal{X}^n}\sum_{i=1}^n F\left(y^i,\Z^+(x,\boldsymbol{y}),t\right)K_n\left(x,\Z,t,\boldsymbol{y}\right)\mathcal{M}_n(\text{d}\boldsymbol{y}) < +\infty.
\end{multline*}
\end{assumption}
Notice that for a chosen set of parameters of the original branching process $(\Z_s, s\geq 0)$, this assumption restricts the set of suitable weight functions. 

\correct{Now we can introduce the $\Zbarhatset$-valued spine process associated with a function $\psi \in \mathcal{D}$ \correct{satisfying} Assumption \ref{assumption B}, that will be called the $\psi$-spine process in order to keep track of the function used in its construction.
For such functions $\psi$, the generators $(\widehat{\mathcal{L}}^t_{\psi}, t\geq 0)$ of the $\psi$- spine process is defined for all $h \in \mathcal{C}^1(\mathcal{U}\times\mathcal{X},\mathbb{R})$, $H \in  \mathcal{C}^1(\mathcal{U}\times\mathbb{R},\mathbb{R}_+^*)$,
and for all $(e,\Zbar) \in \Zbarhatset\times\mathbb{R}_+$ by} 
\begin{multline}\label{generator spinal}
\widehat{\mathcal{L}}^t_{\psi}H(e,\langle \Zbar, h \rangle) := \widehat{G}H(e,\langle \Zbar, h \rangle) + \sum_{n\geq 0} \int_{\mathcal{U} \times \mathcal{X}}B_n(x,\Z,t)  \\
\times \int_{\mathcal{X}^n}K_n\left(x,\Z,t,\boldsymbol{y}\right) \Bigg\{\mathbbm{1}_{\{u=e\}}\sum_{i=1}^n \big[ H \left(ei, \langle\Zbar_+(e,\boldsymbol{y}), h \rangle \right)- H(e,\langle \Zbar, h \rangle)\big]\frac{\psi(y^i,\Z^+(x_e,\boldsymbol{y}))}{\psi(x_e,\Z,t)} \\
 +\mathbbm{1}_{\{u\neq e\}}  \big[ H\left(e,\langle \Zbar_+(u,\boldsymbol{y}), h \rangle \right) - H(e,\langle \Zbar, h \rangle) \big]\frac{\psi(x_e,\Z^+(x_u,\boldsymbol{y}),t)}{\psi(x_e,\Z,t)} \Bigg\}  \mathcal{M}_n(\text{d}\boldsymbol{y})\Zbar(\textrm{d}u,\textrm{d}x),
\end{multline}
where
\begin{equation*}
\widehat{G}H(e,\langle \Zbar, f \rangle) := D_2H\left(e,\langle\Zbar,h(\cdot)\rangle \right)\int_{\mathcal{U}\times\mathcal{X}}\frac{\partial h}{\partial x}(u,x)\cdot\mu\left(x,\Z,t\right)\Zbar(\textrm{d}u,\textrm{d}x).
\end{equation*}
Notice that \correct{the} branching rates of both spinal and non-spinal individuals are biased by the function $\psi$. Assumption \ref{assumption B} ensures that the total branching rate is finite from every state of the spinal process, however it is not sufficient to avoid explosion of this process in finite time. Dynamics \ref{spinal outside rates} and \ref{spine rates} below will provide a more \correct{detailed} explanation of the spine process associated with this generator.

Finally, we introduce for all $t\geq 0$, the $\mathcal{U}$-valued random variable $U_t$ that picks an individual alive at time $t$. Its law is characterized by the function $p_u\left(\Zbar_t\right)$ which yields the probability to choose the individual of label $u$ in the set $\mathbb{G}\left(t\right)$.
We can now state our main result, that is a Girsanov-type formula for the spinal change of measure. It characterizes the joint probability distribution of $\left(U_t,(\Zbar_s,s\leq t)\right)$- that is the randomly sampled individual in the population $\Zbar_t$ at time $t$ and the whole trajectory of the population until this time- and links it to the law of the spine process through a path-integral formula. 
\begin{theorem}\label{thm:pdmc}
Let $\psi \in\mathcal{D}$ \correct{satisfying} Assumption \ref{assumption B}, $t \geq 0$, and $\bar{z} \in \Zbarset$. 
Let $\left(\left(E_t,\Zbarhat_{t}\right), t\geq 0 \right)$ be the time-inhomogeneous $\Zbarhatset$-valued branching process with interactions defined by the infinitesimal generator $\widehat{\mathcal{L}}_{\psi}$ introduced in \eqref{generator spinal}. Let $\widehat{T}_{\textrm{Exp}}$ denote its explosion time and $\left(\left(Y_t,\Zhat_{t}\right), t\geq 0 \right)$ its projection on $\Zhatset$.

For every \correct{non-negative measurable} function $H$ on $\mathcal{U} \times \mathbb{D}\left([0,t],\Zset\right)$:
\begin{multline*}\label{eq:pdmc}
\mathbb{E}_{\bar{z}}\left[\mathbbm{1}_{\{T_{\textrm{Exp}} > t, \mathbb{G}(t) \neq \emptyset\}}H\left(U_t,(\Zbar_s,s\leq t)\right) \right] = \\ \langle z,\psi(\cdot,z) \rangle \mathbb{E}_{\bar{z}}\left[\mathbbm{1}_{\{\widehat{T}_{\textrm{Exp}}>t\}} \xi\left(E_t,(\Zbarhat_s, s\leq t)\right) H\left(E_t,(\Zbarhat_s, s\leq t)\right)\right],
\end{multline*}
where:
\begin{equation*}
\xi\left(E_t,(\Zbarhat_s, s\leq t)\right) :=  \frac{p_{E_t}\left(\Zbarhat_t\right)}{\psi\left(Y_t,\Zhat_t,t\right)}\exp\left(\int_0^t \frac{\mathcal{G} \psi\left(Y_s,\Zhat_s,s\right)}{\psi\left(Y_s,\Zhat_s,s\right)}\textrm{d}s\right).
\end{equation*}
\end{theorem}
We take inspiration from the work of Bansaye \cite{B21} for the proof. The idea is to decompose both processes on their possible trajectories, then establish by induction on the successive jumps times the equality in law for the trajectories between these times. 
 
The process $\left(\left(E_t,\Zbarhat_{t}\right), t\geq 0 \right)$ gives at any time $t\geq0$ the label of the spinal individual- that encodes the whole spine lineage- and the spinal population. Our result thus links, for every $\psi$, the sampling of an individual and the trajectory of the population to the trajectory of the spine process. The path integral term that links these two terms is difficult to handle in general and finding eigenfunctions of $\mathcal{G}$ may \correct{greatly} simplify the expression \cite{Cloez17,Athreya00,B21}. 
Finding such functions for single type, density-dependent populations is possible in models with simple interactions \cite{B21}[Section 3]. Nevertheless, this becomes a challenging issue in the majority of scenarios. Subsequent sections of this work will explore applications of this formula where the path-integral component is tractable.

We first introduce additional notations concerning the dynamics of the spine process given by the generators introduced in \eqref{generator spinal}. Following notations of Section \ref{section: model}, and disregarding the dependency on the chosen function $\psi \in\mathcal{D}$ \correct{satisfying} Assumption \ref{assumption B} in the subsequent branching parameters, we introduce the dynamics of the traits in the spine process. As previously discussed, the construction distinguishes dynamics of the spine from the rest of the individuals. 
We first introduce the branching parameters of the individuals outside the spine in a spinal population.
\begin{dyn}[Individuals outside the spine]\label{spinal outside rates}
For all $\left(n,x,\left(x_e,\Z\right),t\right)$ in $\mathbb{N}^* \times \mathcal{X} \times \Zhatset \times \mathbb{R}_+$\correct{,}
\begin{enumerate}%[label=(\roman*)]
\item $\widehat{K}_n\left(x_e,x,\Z,t,\cdot\right) \in \mathfrak{B}\left(\mathcal{X}^n \right)$ is the density of the traits at birth of the $n$ children generated by a non-spinal individual of trait $x$ at time $t$ in a spinal population $\left(x_e,\Z\right)$. 
\begin{equation}\label{def: K hat}
\widehat{K}_n\left(x_e,x,\Z,t,\cdot\right) := \frac{1}{\widehat{\Gamma}_n\left(x_e,x,\Z,t\right)} \psi\left(x_e,\Z^+(x,\cdot),t\right)K_n\left(x,\Z,t,\cdot\right),
\end{equation}  
where $\widehat{\Gamma}_n(\cdot)$ is the normalization function, defined as
$$\widehat{\Gamma}_n\left(x_e,x,\Z,t\right) := \int_{\mathcal{X}^n}\psi\left(x_e,\Z^+(x,\boldsymbol{y}),t\right)K_n\left(x,\Z,t,\boldsymbol{y}\right)\mathcal{M}_n(\text{d}\boldsymbol{y}).$$
We recall the definition of $\Z^+$ in \eqref{nu+}.
\item The law $\left(\widehat{p}_n\left(x_e,x,\Z,t\right), n \in \mathbb{N}\right)$ of the number of children of a \correct{non-spinal} individual of trait $x$ branching at time $t$ in a spinal population $\left(x_e,\Z\right)$, is defined for all $n \in \mathbb{N}$ as
$$
\widehat{p}_n\left(x_e,x,\Z,t\right) := \frac{1}{\sum_{k\in\mathbb{N}}\widehat{\Gamma}_k\left(x_e,x,\Z,t\right)p_k\left(x,\Z,t\right)}\widehat{\Gamma}_n\left(x_e,x,\Z,t\right)p_n\left(x,\Z,t\right).
$$
\correct{Note that Assumption \ref{assumption B} ensures that the sum in the denominator is finite.}
\item Each individual of trait $x$ outside the spine of trait $x_e$ in a population $\Z$ at time $t$, branches to $n$ children at rate $\widehat{B}_n\left(x_e,x,\Z,t\right)$, defined as
$$
\widehat{B}_n\left(x_e,x,\Z,t\right):=\frac{\widehat{\Gamma}_n\left(x_e,x,\Z,t\right)}{\psi\left(x_e,\Z,t\right)}B_n\left(x,\Z,t\right).
$$ 
\end{enumerate}
\end{dyn}

The total branching rate outside the spine is defined, for all $(x_e,\Z,t)\in\Zhatset\times\mathbb{R}_+$ by
\begin{equation*}
\widehat{\tau}\left(x_e,\Z,t\right) := \int_{\mathcal{X}}\sum_{n \geq 0}\widehat{B}_n\left(x_e,x,\Z,t\right)\Z(\textrm{d}x)- \sum_{n \geq 0}\widehat{B}_n\left(x_e,x_e,\Z,t\right).
\end{equation*} 
We now introduce the branching parameters of the spine in a $\psi$-spinal construction.
\begin{dyn}[Spinal individual]\label{spine rates}
 For all $\left(n,\left(x_e,\Z\right),t\right)$ in $\mathbb{N}^* \times \Zhatset \times \mathbb{R}_+$,
\begin{enumerate}%[label=(\roman*)]
\item $\widehat{K}^*_n\left(x_e,\Z,t,\cdot\right) \in \mathfrak{B}\left(\mathcal{X}^n \right)$ is the density of the traits at birth of the $n$ children generated by the spinal individual of trait $x_e$ at time $t$ in a population $\Z$. 
For all $\boldsymbol{y} \in \mathcal{X}^n$,
\begin{equation}\label{def: K star hat}
\widehat{K}^*_n\left(x_e,\Z,t,\boldsymbol{y}\right) :=  \frac{1}{\widehat{\Gamma}^*_n\left(x_e,\Z,t\right)}\sum_{i=1}^n\psi\left(y^i,\Z^+(x_e,\boldsymbol{y}),t\right)K_n\left(x_e,\Z,t,\boldsymbol{y}\right),
\end{equation}  
where $\widehat{\Gamma}^*_n(\cdot)$ is the normalization function, defined as
$$\widehat{\Gamma}^*_n\left(x_e,\Z,t\right) := \int_{\mathcal{X}^n}\sum_{i=1}^n \psi\left(y^i,\Z^+(x_e,\boldsymbol{y}),t\right)K_n\left(x_e,\Z,t,\boldsymbol{y}\right)\mathcal{M}_n(\text{d}\boldsymbol{y}).$$
\item The law $\left(\widehat{p}^*_n\left(x_e,\Z,t\right), n \in \mathbb{N}\right)$ of the number of children of the spinal individual of trait $x_e$ branching at time $t$ in a population $\Z$, is defined for all $k \in \mathbb{N}$ as
\begin{equation}\label{def: p star hat}
\widehat{p}^*_n\left(x_e,\Z,t\right) := \frac{1}{\sum_{k\in\mathbb{N}}\widehat{\Gamma}^*_k\left(x_e,\Z,t\right)p_k\left(x_e,\Z,t\right)}\widehat{\Gamma}^*_n\left(x_e,\Z,t\right)p_n\left(x_e,\Z,t\right).
\end{equation}  
\item The spinal individual of trait $x_e$ in a population $\Z$ at time $t$, branches to $n$ children at a rate $\widehat{B}_n^*\left(x_e,\Z,t\right)$, defined as
\begin{equation}\label{def: B star hat}
\widehat{B}^*_n\left(x_e,\Z,t\right):=\frac{\widehat{\Gamma}^*_n\left(x_e,\Z,t\right)}{\psi\left(x_e,\Z,t\right)}B_n\left(x_e,\Z,t\right).
\end{equation} 
\item When the spinal individual of trait $x_e$ branches at time $t$ in a population $\Z$ and is replaced by $n$ children with trait $\boldsymbol{y}$, the integer-valued random variable $J(x_e,\Z,t,\boldsymbol{y})$ choosing the new spinal individual after a spinal branching event is given, for all $1 \leq j\leq n$ by
\begin{equation}\label{spine proba}
\mathbb{P}\left(J(x_e,\Z,t,\boldsymbol{y})=j\right) = \frac{\psi\left(y^j,\Z^+(x_e,\boldsymbol{y}),t \right)}{\sum_{i=1}^n \psi\left(y^i,\Z^+(x_e,\boldsymbol{y}),t \right)}.
\end{equation}
The first spine in an initial population $z$ is chosen according to the same law.
\end{enumerate}
\end{dyn}
The total branching rate from every \correct{spinal state} $(x_e,\Z,t)\in\Zhatset\times\mathbb{R}_+$ is \correct{denoted by}
\begin{equation}\label{tau hat tot}  
\widehat{\tau}_{\textrm{tot}}\left(x_e,\Z,t\right) := \sum_{n\geq 0} \widehat{B}_n^*\left(x_e,\Z,t\right) + \widehat{\tau}\left(x_e,\Z,t\right).
\end{equation}
Notice that $\widehat{K}^*_0 = 0$, therefore the spinal individual cannot die without children and the spinal population never goes extinct. Notice that for $\psi \equiv 1$, individuals outside the spine follow the same dynamics as the individuals in the population $\left(\Z_t,t\geq 0\right)$. In this case, the spinal individual of trait $x_e$ branches at time $t$ in a population $\Z$ with rate $m(x_e,\Z,t)B(x_e,\Z,t)$, where $m(\cdot)$ is the mean number of children that is finite under Assumption \ref{assumption B}. The random number of children at a branching event thus  follows the size-biased law $np_n(\cdot)/m(\cdot)$ and the new spinal individual is chosen uniformly among the offspring.

Theorem \ref{thm:pdmc} is verified until the first explosion time of both processes. We established in Proposition \ref{prop: unique original} that under Assumption \ref{assumption A} the branching process does not explode in finite time. To ensure the non explosion of the spine process, we have to consider an additional assumption on the weight function $\psi$ used for the construction.

\begin{assumption}\label{assumption C}
There exists a positive continuous function $\hat{b}_0$ on $\mathbb{R}_+$, such that for all $\left(x,\Z,t\right) \in \mathcal{X}\times\Zset\times\mathbb{R}_+$
\begin{equation*}
\sum_{n \neq 1}n\left(\widehat{B}_n\left(x,\Z,t\right)+ \widehat{B}^*_n\left(x,\Z,t\right) \right) \leq \hat{b}_0(t)\left(1 + \vert x\vert \right).
\end{equation*}
\end{assumption}
This assumption, involving both the branching parameters and the function $\psi$, is stronger than Assumption \ref{assumption B}. The set of weight functions that can be used to construct a spine process that does not explode in finite time may differ from one model to another. However, one may rather use more restrictive conditions that are sufficient for every branching process under Assumption \ref{assumption A}. For example, in mass-conservative models, taking $\psi(x,\Z)=x$ ensures the non-explosion of the spine process regardless the initial branching process that satisfies Assumption \ref{assumption A}.  

\begin{proposition}\label{prop: generator spine}
Under Assumption \ref{assumption A}, for every $\psi \in \mathcal{D}$ \correct{satisfying} Assumptions \ref{assumption B} and \ref{assumption C}, the $\psi$-spine process $((E_t,\Zbarhat_t),t \geq0)$ \correct{defined by the infinitesimal generator $\widehat{\mathcal{L}}_{\psi}$ introduced in \eqref{generator spinal}} does not explode in finite time. Furthermore the generator $\widehat{L}_{\psi}$ of the marginal spine process $((Y_t,\Zhat_t),t \geq0)$ given by Dynamics \ref{spinal outside rates} and \ref{spine rates}, is defined for all function $F \in \mathcal{D}$ and all $\left(x,\Z,t\right) \in \mathcal{X}\times\Zset\times\mathbb{R}_+$ by
\begin{equation*}
\widehat{L}_{\psi}F\left(x,\Z,t\right) := \frac{\mathcal{G}\left[\psi F\right]\left(x,\Z,t\right)}{\psi\left(x,\Z,t\right)} - \frac{\mathcal{G}\psi\left(x,\Z,t\right)}{\psi\left(x,\Z,t\right)}F\left(x,\Z,t\right).
\end{equation*}
\end{proposition}
\correct{The proof of non-explosion is shown following the proof of Proposition \ref{prop: unique original}, and the expression of the generator of the marginal spine process is purely computational. The detailed proof is presented in Appendix \ref{appendix a}.}

It follows that the marginal law of the spine process is characterized by the operator $\mathcal{G}$ and the weight function $\psi$. 
\begin{corollary}\label{cor: pdmc} Let $\psi \in \mathcal{D}$ \correct{satisfying} Assumption \ref{assumption C}, $t\geq 0$ and $\bar{z} \in \Zbarset$. Under Assumption \ref{assumption A}, for any $f$ \correct{non-negative measurable} function on $ \mathcal{U} \times \mathbb{D}\left([0,t],\mathcal{X}\times\Zbarset\right)$,
\begin{multline*}
\mathbb{E}_{\bar{z}}\left[\sum_{u \in \mathbb{G}(t)}\psi\left(X^u_t,\Zbar_t,t \right)f\left(u,\left(\left(X^u_s,\Zbar_s\right),0\leq  s \leq t\right)\right) \right]\\
= \langle z,\psi(\cdot,z,0) \rangle \mathbb{E}_{\bar{z}}\left[ \exp\left(\int_0^t \frac{\mathcal{G} \psi\left(Y_s,\Zhat_s,s\right)}{\psi\left(Y_s,\Zhat_s,s\right)}\textrm{d}s\right)f\left(E_t,\left(\left(Y_s,\Zbarhat_s\right), 0 \leq s \leq t\right)\right)\right].
\end{multline*}
\end{corollary}
\begin{proof}
We use Assumptions \ref{assumption A} and \ref{assumption C} to ensure that $T_{\textrm{Exp}}$ and $\widehat{T}_{\textrm{Exp}}$ are almost surely infinite. Let $f$ be \correct{non-negative measurable} function on $\mathcal{U} \times \mathbb{D}\left([0,t],\mathcal{X}\times\Zbarset\right)$. We introduce $H$ the \correct{non-negative measurable} function defined for all $(u,\bar{z}_s, s\leq t) \in\mathcal{U}\times \mathbb{D}\left([0,t],\Zset\right)$ by
$$
H(u,\bar{z}_s, s\leq t) := \psi(X^u_t,z_t,t)f\left(u,\left(X^u_s,\bar{z}_s\right), s\leq t\right)\langle z_t,1\rangle.
$$
The corollary is thus a direct application of Theorem \ref{thm:pdmc} to the function $H$ with a uniformly sampled individual. 
\end{proof}
This formula gives a change of probability that involves the function $\mathcal{G}\psi/\psi$ with a path-integral formula. This study is related to Feynman-Kac path measures and semigroups. We refer to \cite{Delmoral04} for an overview on this subject.
In the case of a branching process with interactions, the integral term depends on the trajectory of the whole spinal population. In general cases with interactions, the branching property is not verified and the so-called Many-to-One formula- see \textit{e.g.} Proposition 9.3 in \cite{bansaye2015stochastic}- fall apart\correct{.}
However, if $\psi$ is an eigenfunction of the operator $\mathcal{G}$, then we have the following Many-to-One formula for any non-negative measurable function $g$ on $ \mathbb{D}\left([0,t],\mathcal{X}\right)$
\begin{equation*}
\mathbb{E}_{\bar{z}}\left[\sum_{u \in \mathbb{G}(t)}\psi\left(X^u_t,\Zbar_t,t \right)g\left(X^u_s,s \leq t\right) \right] = C_t \mathbb{E}_{\bar{z}}\left[g\left(Y_s, s \leq t\right)\right],
\end{equation*}
where $C_t$ is a time-dependent positive constant. This formula, established in \cite{Cloez17} \correct{for branching processes without interactions}, reduces the empirical measure of the trajectories of all the individuals until time $t$ to the law of the trajectory of a unique individual in the spinal construction, the spinal individual. The spinal individual in this case can be considered as a typical individual, reflecting the average behavior of the whole population. 

\begin{remark}\label{rem mto}
If we assume for all $\left(x,\Z,t\right) \in \mathcal{X}\times\Zset\times\mathbb{R}_+$, that $B(x,\Z,t) = B(t)$ and for all $k\geq 0$, $p_k(x,\Z,t) = p_k(t)$ in the considered branching process, taking $\psi \equiv 1$ gives the classical Many-to-One formula \cite{BDMT2011}: 
\begin{equation*}
\mathbb{E}_{\bar{z}}\left[\sum_{u \in \mathbb{G}(t)}g\left(X^u_s,s \leq t\right) \right] = \mathbb{E}_{\bar{z}}\left[\langle \Z_t,1 \rangle\right] \mathbb{E}_{\bar{z}}\left[g\left(Y_s, s \leq t\right)\right]\correct{,}
\end{equation*}
where the average number of individuals in the population is given at time $t$ by 
\begin{equation*}
\mathbb{E}_{\bar{z}}\left[\langle \Z_t,1 \rangle\right] = \langle z,1\rangle \exp\left(\int_0^tB(s)(m(s) -1)\textrm{d}s \right).
\end{equation*}
\end{remark}

\section{Kesten-Stigum criterion}
\label{section: llogl}
In this section, we present a generalized Kesten-Stigum theorem for structured processes with interactions. \correct{For every functions $\psi \in \mathcal{D}$, we exhibit a non-negative martingale of the original branching process with interactions $(\Z_t,t\geq0)$ associated with the $\psi$-spinal construction and, under some assumptions on the set of functions $\psi$, we present a result on the degeneracy of its limiting martingale.}
For this section, we will suppose that Assumption \ref{assumption A} holds to prevent explosion of the process in finite time.
\begin{proposition}\label{prop:martingale}
Under Assumption \ref{assumption A}, for every $\psi \in \mathcal{D}$ \correct{satisfying} Assumption \ref{assumption B},
\begin{equation}\label{def:martingale}
W_t(\psi) := \sum_{u \in \mathbb{G}(t)}\exp\left(-\int_0^t\frac{\mathcal{G} \psi\left(X^u_s,\Z_s,s\right)}{\psi\left(X^u_s,\Z_s,s\right)}\textrm{d}s\right)\psi\left(X^u_t,\Z_t,t\right)
\end{equation}
is a non-negative martingale with respect to the filtration $\left(\mathcal{F}_t, t\geq 0\right)$ generated by the original process. It almost surely converges to a random variable $W(\psi) \in [0, \infty)$. 
\end{proposition}
Notice that if the process $(\Z_t, t\geq 0)$ goes extinct almost surely, then $W(\psi)=0$ almost surely. However, even on the survival event, the martingale $(W_t(\psi), t\geq 0)$ may also almost surely degenerate to 0. Note that Proposition \ref{prop:martingale} holds true for every function $\psi\in\mathcal{D}$. However, in order to prove the Kesten-Stigum result on the limiting martingale, we have to restrict the set of functions $\psi$. We recall the notation $\Z^+(x,\boldsymbol{y}) := \Z + \sum_{i=1}^n\delta_{y^i} - \delta_{x}$, introduced in \eqref{nu+}.
\begin{assumption}\label{assumption H_technical} For all $(x_e,(x,\nu),t) \in \mathcal{X}\times\Zhatset\times\mathbb{R}_+$, for all $k\geq 0$ and for all $\boldsymbol{y}$ in a subset $\mathcal{A} \subset \mathcal{X}^n$ such that $K_n\left(x,\nu,t,\boldsymbol{y}\right)\mathcal{M}_n\left(\mathcal{A}\right) >0$: 
$$ \psi\left(x_e,\Z^+(x,\boldsymbol{y}),t\right) = \psi(x_e,\nu,t).$$
\end{assumption}
Note that this assumption is weaker than assuming that the function $\psi$ is independent from the population. In fact the function $\psi$ is allowed to depend on a quantity that is conserved at non-spinal jump events, for example the total population mass if the jumps are mass conservative. Another interesting case is when $\psi$ is a function of a coupling variable that derives from the population, like the quantity of available resources for the individuals.
\begin{assumption}\label{assumption H_inf_technical} There exists $M \in \mathbb{R}_+^*$ such that, for all $((x,\nu),t,n) \in \Zhatset\times\mathbb{R}_+\times\mathbb{N}^*$ and for all $1\leq j\leq n$:
\begin{multline*}
\int_{\mathcal{X}^n}\psi\left(y^j,\Z^+(x,\boldsymbol{y}),t\right)\log\left(\psi\left(y^j,\Z^+(x,\boldsymbol{y}),t\right)\right)K_n(x,\nu,t,\boldsymbol{y})\mathcal{M}_n(\text{d}\boldsymbol{y})\\ \leq M \int_{\mathcal{X}^n}\sum_{i=1}^n\psi(y^i,\Z^+(x,\boldsymbol{y}),t)K_n(x,\nu,t,\boldsymbol{y})\mathcal{M}_n(\text{d}\boldsymbol{y}).
\end{multline*}
\end{assumption}
This technical assumption ensures that the function $\psi$ does not assign too much load to any specific child at a branching event. Notice that bounded functions verify this hypothesis.  
\begin{assumption}\label{assumption H} Assumptions \ref{assumption A} and \ref{assumption C} hold true and there exist $c,C,\overline{\tau},\underline{\tau} \in \mathbb{R}_+^*$ such that, for all $(x,\nu,t) \in \Zhatset\times\mathbb{R}_+$  
 $$c \leq \frac{\mathcal{G} \psi(x,\Z,t)}{\psi(x,\Z,t)}\leq C \quad \textrm{and } \quad \underline{\tau} \leq \sum_{n\geq 0}\widehat{B}^*_n(x,\nu,t) \leq \overline{\tau},$$
 where $\mathcal{G}$ and $\widehat{B}^*_n$ are respectively defined in \eqref{def: mathcal G} and \eqref{def: B star hat}.
\end{assumption}
The first hypothesis controls the range of variations of the exponential term in the martingale $W_t(\psi)$. Note that all eigenfunctions of the operator $\mathcal{G}$ verify this hypothesis. The second point ensures that the branching events of the spinal individual in the $\psi$-spinal process do not stop nor accumulate too fast.
We can now express the main result of this section.
\begin{theorem}\label{thm LlogL}
Let $\psi \in \mathcal{D}$ satisfying Assumption \ref{assumption H} and introduce for all $n \in \mathbb{N}$ 
$$
\overline{\widehat{p}}^*_n:=\sup_{(x,\Z,t) \in \Zhatset\times\mathbb{R}_+}\widehat{p}^*_n(x,\Z,t), \quad \ \underline{\widehat{p}}^*_n:=\inf_{(x,\Z,t) \in \Zhatset\times\mathbb{R}_+}\widehat{p}^*_n(x,\Z,t),
$$
$$
\overline{\widehat{K}}^*_n(\cdot):=\sup_{(x,\Z,t) \in \Zhatset\times\mathbb{R}_+}\widehat{K}^*_n(x,\Z,t,\cdot) \quad \text{and} \quad \quad \ \underline{\widehat{K}}^*_n(\cdot):=\inf_{(x,\Z,t) \in \Zhatset\times\mathbb{R}_+}\widehat{K}^*_n(x,\Z,t,\cdot),
$$
where $(\widehat{p}^*_n, k\geq 0)$ and $\widehat{K}^*_n$ are respectively the law of the number of children and the measure giving the offspring traits for the spine, defined respectively in \eqref{def: p star hat} and \eqref{def: K star hat}. 
\begin{enumerate}
\item Under Assumption \ref{assumption H_technical}, if
\begin{equation}
\sum_{n\geq 1}\overline{\widehat{p}}^*_n<+\infty, \quad \sup_{n\geq 1}\int_{\mathcal{X}^n}\overline{\widehat{K}}^*_n(\boldsymbol{y})\mathcal{M}_n(\text{d}\boldsymbol{y})<+\infty
\end{equation}
and
\begin{equation}\label{crit sup finite}
\sum_{n\geq 1} \overline{\widehat{p}}^*_n \int_{\mathcal{X}^n}\sup_{(x,\Z,t) \in \Zhatset\times\mathbb{R}_+}\left[\log\left(\sum_{i=1}^n\psi(y^i,\Z^+(x,\boldsymbol{y}),t) \right)\right] \overline{\widehat{K}}^*_n(\boldsymbol{y})\mathcal{M}_n(\text{d}\boldsymbol{y})<+\infty
\end{equation}
then, for all initial measure $z \in\Zset$ 
$$\mathbb{E}_{z}\left[W(\psi)\right] = \langle z,\psi(\cdot,z,0)\rangle.$$ 
\item Under Assumption \ref{assumption H_inf_technical},
\begin{equation}\label{crit inf infinite}
\sum_{n\geq 1} \underline{\widehat{p}}^*_n \int_{\mathcal{X}^n}\inf_{(x,\Z,t) \in \Zhatset\times\mathbb{R}_+}\left[\log\left(\sum_{i=1}^n\psi(y^i,\Z^+(x,\boldsymbol{y}),t) \right)\right]\underline{\widehat{K}}^*_n(\boldsymbol{y})\mathcal{M}_n(\text{d}\boldsymbol{y}) = +\infty
\end{equation}
implies that $W(\psi) = 0$ almost surely.
\end{enumerate}
\end{theorem}
The idea of the proof, based on the conceptual proofs established in \cite{LPP95,Georgii03,B21}, is to use Theorem \ref{thm:pdmc} to consider the dual problem associated with the $\psi$-process. Then consider the spinal process as a process with immigration where the spinal individual provides new individuals at a $\psi$-biased rate, and the new spine is the new source of immigration. However as the function $\psi$ is not constant, the spinal construction also changes the dynamics of individuals outside the spine, that do not behaved as those in the original process $(\Z_t, t\geq 0)$. Assumption \ref{assumption H_technical} and \ref{assumption H_inf_technical} are used to control the behavior of the individuals outside the spine.

In the particular case $\psi \equiv 1$ or any positive constant. We have for all $x,\Z,t \in \Zhatset\times \mathbb{R}_+$
$$
\frac{\mathcal{G} \psi(x,\Z,t)}{\psi(x,\Z,t)} = B(x,\Z,t)\left(m(x,\Z,t) -1 \right),\quad \widehat{K}^*_n(x,\Z,t) = K_n(x,\Z,t)$$
and $$\widehat{p}^*_n(x,\Z,t) = \frac{np_n(x,\Z,t)}{m(x,\Z,t)}.
$$
Note that Assumptions \ref{assumption H_inf_technical} and \ref{assumption H_technical} are verified, and Assumption \ref{assumption H} becomes 
\begin{assumption}\label{assumption H psi 1} Assumptions \ref{assumption A} and \ref{assumption C} hold true and there exist $c,C,\underline{B},\overline{B} \in \mathbb{R}_+^*$ such that, for all $x,\Z,t \in \Zhatset\times\mathbb{R}_+$  
 $$c \leq B(x,\Z,t)(m(x,\Z,t)-1) \leq C \quad \textrm{and } \quad \underline{B} \leq B(x,\Z,t) \leq \overline{B} .$$
\end{assumption}
This assumption ensures that the process is strongly supercritical- in \correct{the} sense that the uniform lower bound of the reproductive law is strictly greater than 1- and that the only absorbing state for the branching process is the null measure $\Z \equiv 0$. This uniform hypothesis could be partially \correct{relaxed} under strong positivity assumptions on the generator of the branching process with interactions, see \cite{KLPP97} in the discrete case and \cite{harris1963theory} Chapter 3 for continuous time. It also restricts the setting to branching processes with bounded branching rates. 
\begin{corollary}\label{thm LlogL psi 1}
We assume that $\psi \equiv 1$ and that Assumption \ref{assumption H psi 1} holds true. Let 
$$
\overline{p}_n:= \underset{(x,\Z,t) \in \Zhatset\times\mathbb{R}_+}{\sup}p_n(x,\Z,t) \quad \text{and} \quad \underline{p}_n:=\underset{(x,\Z,t) \in \Zhatset\times\mathbb{R}_+}{\inf}p_n(x,\Z,t).
$$
If $\sum_{n\geq 1}\overline{p}_n < +\infty$ and $\sum_{n\geq 1}\underline{p}_n > 0$, we can introduce $\bar{L}$ and $\underline{L}$ the $\mathbb{N}^*$-valued random variables of law given respectively by $(\overline{p}_n/\sum_{k\geq 1}\overline{p}_k, n\geq 0)$ and $(\underline{p}_n/\sum_{k\geq 1}\underline{p}_k, n\geq 0)$.  
\newline
In this case, we have the following results on the limiting martingale:
\begin{itemize}
\item If $\mathbb{E}\left[\bar{L} \log\left(\bar{L}\right) \right] < +\infty$, then for all initial measure $z \in\Zset$, $\mathbb{E}_{z}\left[W(1)\right] = \langle z,1\rangle.$
\item If $\mathbb{E}\left[\underline{L} \log\left(\underline{L}\right) \right] = +\infty$,
then $W(1) = 0$ almost surely.
\end{itemize}
\end{corollary}
Notice that these conditions are similar to the conditions (16b) and (18b) in \cite{Athreya00}. In the case of constant reproductive law $(p_n, n\geq 0)$, it is well known that these two conditions become a dichotomy \cite{KS66}. Athreya \cite{Athreya00} showed that this dichotomy remains valid for multitype Galton-Watson processes with a finite set of traits and no interactions. 
\begin{remark}
We recall that for non-structured branching processes the mean size $\mathbb{E}\left[N_t\right]$ of the population, where $N_t:= \langle \Z_t,1 \rangle$, is given by
$\mathbb{E}\left[N_t\right] = N_0 \exp\left(\int_0^tB(s)(m(s) -1)\textrm{d}s \right)$.
One may ask under what conditions this exponential growth accurately reflects the rate of increase in the population size.
In this particular case, $W(1)_t/ N_0 = N_t / \mathbb{E}\left[N_t\right]$, thus finding conditions for the non-degeneracy of this martingale gives a direct answer to the question. In the general case with interactions, Corollary \ref{cor: pdmc} gives
$$
W(1)_t\frac{\mathbb{E}\left[N_t\right]}{N_0} = \sum_{u \in \mathbb{G}(t)}e^{-\int_0^tB\left(X^u_s,\Z_s,s\right)\left(m\left(X^u_s,\Z_s,s\right) -1 \right)\textrm{d}s}\mathbb{E}_{\bar{z}}\left[ e^{\int_0^t B\left(Y_s,\Zhat_s,s\right)\left(m\left(Y_s,\Zhat_s,s\right)-1\right)\textrm{d}s}\right],
$$
that is close to $N_t$ if the mean behavior of the spinal individual is not far from the averaging behavior of all the individuals in the branching process with interactions.
\end{remark}

\section{A Yule process with competition}
\label{section: Yule}
In this section, we introduce an alternative application of the spinal construction method that enables us to obtain a spine process with straightforward dynamics from a branching process with intricate interactions. As a toy model, we consider a time-inhomogeneous Yule process with competitive interactions between the individuals, affecting their traits. The individuals in the population are characterized by their trait $x \in \mathbb{R}_+^*$ that can be for example a mass or a size. An individual with trait $x$ divides at an instantaneous rate $r(t)x$ where $r$ is a measurable function on $\mathbb{R}_+$, into two children of size $\Lambda x$ and $(1-\Lambda)x$, where $\Lambda$ is a $[0,1]$-valued random variable with probability density function (p.d.f) $q$. We assume that:
\begin{equation}\label{def Yule param Lambda}
m_{\textrm{div}} := \mathbb{E}\left[\Lambda\right] \in (0,1),\quad \textrm{and } \quad
K_{\textrm{div}} := \mathbb{E}\left[\frac{1}{\Lambda (1-\Lambda)}\right] < +\infty.
\end{equation}
This mass-conservative mechanism of division is classical in cell modeling \cite{fritsch15}.
Moreover, each individual experiences the influence of the whole population, leading to a reduction of their trait. Consequently, at an instantaneous rate $d(t)N_t$ - where $N_t$ is the population size at time $t$, and $d$ is a positive measurable function on $\mathbb{R}_+$ - each individual loses a fraction $(1-\Theta)$ of its size. $\Theta$ is a $[0,1]$-valued random variable with $p$ its p.d.f and we assume that
\begin{equation}\label{def Yule param Theta}
m_{\textrm{loss}} := \mathbb{E}\left[ \Theta \right] \in (0,1) \quad \textrm{and } \quad K_{\textrm{loss}} := \mathbb{E}\left[ \frac{1}{\Theta} \right] < +\infty .
\end{equation}
These events can be interpreted as an inhibition of reproductive material in cells due to competitive interactions within the population.
Finally, we consider that the trait of each individual grows exponentially at an instantaneous rate $\mu(t)$. The dependency in time of these parameters can represent an external control or the effect of a deterministic environment.
This defines a branching process with interactions $(\Z_t, t\geq0)$, whose law is characterized by the infinitesimal generators $(\mathcal{J}^t, t\geq 0)$. For all $t\geq0$, $\mathcal{J}^t$ is defined on the set $\mathcal{D}_J$, where
$$
\mathcal{D}_J := \left\{H_h \in \mathfrak{B}\left(\mathbb{R}_+\right), \exists \left(h,H\right) \in \mathcal{C}^1\left(\mathbb{R}_+,\mathbb{R}\right)\times \mathcal{C}^1\left(\mathbb{R}_+,\mathbb{R}_+^*\right):\ \forall \Z \in \Zhatset \ H_h\left(\Z\right) = H\left(\langle\Z,h\rangle\right)   \right\}
$$
by 
\begin{align*}
\mathcal{J}^tH_h(&\Z) =  H'\left(\langle\Z,h\rangle\right)\int_{\mathbb{R}_+}h'(y)\mu(t)y\Z(\textrm{d}y) \\
&+ \int_{\mathbb{R}_+} r(t) y\int_0^1 \left[ H_h\left(\Z-\delta_y+\delta_{\lambda y} + \delta_{(1-\lambda)y}\right) - H_h\left(\Z\right) \right] q(\lambda)\textrm{d}\lambda \Z(\textrm{d}y)\\
&+ d(t)\langle \Z, 1 \rangle \int_{\mathbb{R}_+}\int_0^1 \left[ H_h\left(\Z-\delta_y+\delta_{\theta y}\right) - H_h\left(\Z\right) \right] p(\theta)\textrm{d}\theta \Z(\textrm{d}y).
\end{align*}
We notice that Assumption \ref{assumption A} is verified by the parameters of this branching process, and thus it does not explode in finite time almost surely. Note that in this population, the dynamics are correlated such that an increase in population size accelerates the rate of loss, while loss events slow the rate of division. 
%The operator at the core of the spinal construction introduced in \eqref{def: mathcal G}, defined for functions $F$ in the set $\mathcal{D}$ introduced in \eqref{def: D space}, is such that for all $(x,\Z,t) \in \Zhatset\times\mathbb{R}_+$
%\begin{align*}
%\mathcal{G}F(x,\Z,t) &= D_1 F\left(x,\langle\Z,f(\cdot,t)\rangle,t\right)\mu(t)x + D_3 F\left(x,\langle\Z,f(\cdot,t)\rangle,t\right)\\
%&+D_2F\left(x,\langle\Z,f(\cdot,t)\rangle,t\right)\int_{\mathbb{R}_+}\left[\frac{\partial f}{\partial y}(y,t)\mu(t)y + \frac{\partial f}{\partial t}(y,t)\right]\Z(\textrm{d}y) \\
%&+r(t) x \int_0^1 \left[ F\left(\lambda x,\Z-\delta_x + \delta_{\lambda x} +\delta_{(1-\lambda)x},t\right)\right.\\
%&\hspace{2cm}+ F\left((1-\lambda)x,\Z-\delta_x + \delta_{\lambda x} +\delta_{(1-\lambda)x},t\right) \\
%&\hspace{2cm}\left. - F\left(x,\Z-\delta_x + \delta_{\lambda x} +\delta_{(1-\lambda)x},t\right) \right] q(\lambda)\textrm{d}\lambda\\
%&+ d(t)\langle \Z, 1 \rangle\int_0^1\left[ F\left(\theta x,\Z-\delta_x+\delta_{\theta x},t\right) - F\left(x,\Z-\delta_x+\delta_{\theta x},t\right) \right] p(\theta)\textrm{d}\theta\\
%&+ \int_{\mathbb{R}_+} r(t) y\int_0^1 \left[ F\left(x,\Z-\delta_y+\delta_{\lambda y} + \delta_{(1-\lambda)y},t\right) - F\left(x,\Z,t\right) \right] q(\lambda)\textrm{d}\lambda \Z(\textrm{d}y)\\
%&+ d(t)\langle \Z, 1 \rangle \int_{\mathbb{R}_+}\int_0^1 \left[ F\left(x,\Z-\delta_y+\delta_{\theta y},t\right) - F\left(x,\Z,t\right) \right] p(\theta)\textrm{d}\theta \Z(\textrm{d}y).
%\end{align*}
%Notice that polynomial functions can not be eigenvalues of . 
Even for such a simple model, finding analytic expression of eigenfunctions for the non-local operator $\mathcal{G}$ for all $t$ is a complex task, and the existence of such eigenfunctions is not guaranteed in general, see \cite{Brown80,Coville10}. Furthermore, the $\psi$-spinal process associated with an eigenfunction $\psi$ might have highly intricate dynamics. 

\correct{Here we propose a new approach to spinal constructions: we use the change of measure associated with the spinal construction in order to simplify the dynamics within the spine process. For this model, we exhibit an appropriate function $\psi$ for which the $\psi$-spinal process is a Markov process indexed by an independent binary tree where every branch lives for an exponential time of mean $1$.}
We believe that this method can be generalized to different models. 

We choose $\psi \in \mathcal{C}^1(\mathbb{R}_+^* \times \mathbb{R}\times \mathbb{R}_+,\mathbb{R}_+^*)$ and $\phi \in \mathcal{C}^1(\mathbb{R}_+^*\times \mathbb{R}_+ , \mathbb{R})$ such that for all $ (x,y,t) \in \mathbb{R}_+^* \times \mathbb{R}\times \mathbb{R}_+$
\begin{equation}\label{function psi}
\psi(x,y,t) := xe^{-y} \quad \textrm{and} \qquad \phi(x,t) := \ln\left(xr(t)K_{\textrm{div}}\right).
\end{equation}
Applied to a spinal state $(x_e,\Zhat,t) \in \Zhatset$ where $\Zhat := \sum_{u\in\mathbb{G}(t)}\delta_{x^u}$, this weight function verifies
$$
\psi(x_e,\Zhat,t) = \frac{x_e}{\prod_{u\in\mathbb{G}(t)}(r(t)K_{\textrm{div}}x^u)}
$$
and ensures Assumption \ref{assumption B}.
We can now determine the parameters of the spinal process using this function $\psi$. The behavior of the traits between branching events remains unchanged compared to the Yule process under consideration.
The division events occur at rate $1$ for both the spinal and the non-spinal individuals. The random variable $\widehat{\Lambda}$, which determines the distribution of mass during division in the spinal construction, has a density function $\widehat{q}$ given by:
\begin{equation}\label{q hat}
\widehat{q}(\lambda) := \frac{1}{K_{\textrm{div}}}\frac{q(\lambda)}{\lambda(1-\lambda)}.
\end{equation}
As a result, in this $\psi$-spinal process, division events are no longer dependent on the size of the individuals.

At a rate $\widehat{B}_1(x,\Z,t):=K_{\textrm{loss}}d(t)\langle \Z, 1 \rangle$, individuals outside the spine lose a random fraction $1-\widehat{\Theta}$ of their mass  where $\widehat{\Theta}$ follows a probability density function $\widehat{p}$ given by:
\begin{equation}\label{p hat}
\widehat{p}(\theta) = \frac{1}{K_{\textrm{loss}}}\frac{p(\theta)}{\theta}.
\end{equation}
The loss events for the spinal individual follow the same dynamics as those in the Yule process being considered. It is worth noting that Assumption \ref{assumption C} holds in this case, leading to $\widehat{T}_{\textrm{Exp}} = \infty$ almost surely.
Therefore, by using appropriate function $\psi$ in the spinal construction, we can make division events independent of loss events. \correct{In this case, the spinal process verifies the branching property, moreover it is a piece-wise deterministic Markov process with a distinguished individual indexed on a binary tree with unit branching rate. This $\psi$-spinal process falls within the scope of \cite{BDMT2011}, and the limits theorems within it apply to this process. However, concluding on the long time behavior of the original branching process with interactions is not direct as Theorem \ref{thm LlogL} does not apply for this chosen function $\psi$} 

In the following we show that we can use this property to enhance the simulation complexity of the branching process with interactions. 
A classical exact method to simulate non-homogeneous Poisson processes is the thinning algorithm, introduced by Lewis and Shedler \cite{Lewis79}. It is used to simulate Poisson processes of intensity $c(t)$ on a window $[0,T]$ for a fixed $T>0$. The idea is to generate possible jump times $(t_i, 1 \leq i \leq n)$ at a rate $\bar{c} := \sup_{[0,T]} c(t)$ and accept them with probability $c(t_i)/\bar{c}$. When the intensity $c(\cdot)$
depends not only on time $t$ but also on the entire past of the point process, one can use Ogata’s modified thinning algorithm \cite{Ogata81}. Given the information of the first $k$ points, $(t_i, 1 \leq i \leq k)$ the intensity $c(\cdot)$ is
deterministic on $[t_k, T_{k+1}]$ with $T_{k+1}$ the next time of jump. As a result,
generating the next point in such processes can be considered as generating the first point in an inhomogeneous Poisson process. This idea has been more recently adapted for branching process, see \textit{e.g.} \cite{FM04,CM07,fritsch15}.
The main limitation of this method is that $\bar{c}$ can become excessively large even for small simulation windows $T$. This results in the rejection of most of the generated points. Another exact method, based on inverse transform sampling, consists in generating the arrival times of the process by sampling a uniform random variable $U$ in $[0,1]$, see \textit{e.g.} \cite{Devroye86}. The arrival times $t_k$  are thus generated by inversion of the cumulative distribution function of the jumps times, such that $1-\exp(\int_0^{t_k}c(s)\textrm{d}s) = U$. However, an exact inversion is inaccessible in general cases, and in this model in particular.

Here, we propose a new simulation method, based on the spinal construction, that can be much faster than the Ogata's algorithm. The idea is to use the fact that the division events are independent from the mass in the spine process. Thus, we can first generate a binary tree with unit rate, then draw a realization of the distribution of masses at division and choose the spinal individual, and finally spawn a Poisson point process indexed on this tree that distributes the loss events. Theses three steps are illustrated in Figure \ref{fig:Algo}. At last, the trait of each individual at every time is computed using the deterministic growth and the encountered loss events. A general detailed algorithm can be found in Appendix \ref{appendix algorithm}.

\begin{figure}[htbp]
\centering  
\includegraphics[width=1.0\textwidth]{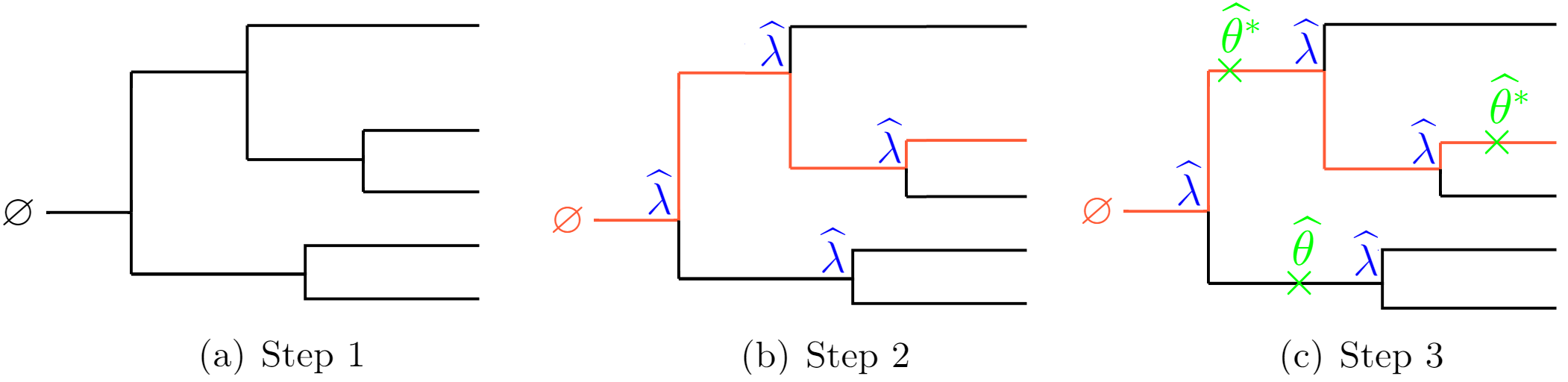}
\caption{Algorithm description}
\label{fig:Algo}
\end{figure}

We then use Theorem \ref{thm:pdmc} to compute various statistics of the branching process based on the statistics obtained from the spine process.
For all $ (x,\Z,t) \in \mathbb{R}_+^* \times \Zset\times \mathbb{R}_+$, the expression of the operator $\mathcal{G}$ applied to the chosen $\psi$, defined in \eqref{function psi}, is given by
\begin{equation*}
\frac{\mathcal{G}\psi\left(x,\Z,t\right)}{\psi\left(x,\Z,t\right)} = \mu(t) + \left(1-\frac{\dot{r}(t)}{r(t)} -\mu(t) + d(t)(N-1)\left(K_{\textrm{loss}}-1\right)\right)N - r(t) B,
\end{equation*}
where for all $\Z \in\Zset$, $N := \langle \Z,1\rangle$ and $B:= \int_{\mathbb{R}_+^*}x\Z(\textrm{d}x)$ denote respectively the size of the population $\Z$ and its total biomass.

\correct{\textbf{Algorithm efficiency}. In order to compare Ogata's method and our spinal method we fix the parameters $r,d$ and $\mu$. The random variables $\Theta$ and $\Lambda$ follow beta laws, respectively $\beta(a,b)$ and $\beta(\alpha,\alpha)$. We will take the values $a=10, b=2$ that correspond to small losses and $\alpha = 20$ to get a narrow symmetrical distribution at division.  With these distributions, the parameters introduced in \eqref{def Yule param Lambda} and \eqref{def Yule param Theta} become
\begin{equation*}
m_{\textrm{div}} = \frac{1}{2},\quad 
K_{\textrm{div}} =4 + \frac{2}{\alpha-1}, \quad m_{\textrm{loss}} = \frac{a}{a+b} \quad \textrm{and } \quad K_{\textrm{loss}} = \frac{a+b-1}{a-1} .
\end{equation*}
We construct two spinal processes using $\psi_1$ and $\psi_2$  such that, for every state $(x_e,\Zhat,t) \in \Zhatset$ where $\Zhat := \sum_{u\in\widehat{\mathbb{G}}(t)}\delta_{x^u}$,
$$
\psi_1(x_e,\Zhat,t) := \frac{x_e}{\prod_{u\in\widehat{\mathbb{G}}(t)}\left(rK_{\textrm{div}}x^u\right)}, \quad \text{and} \quad \psi_2(x_e,\Zhat,t) := \frac{x_e}{\prod_{u\in\widehat{\mathbb{G}}(t)}\left(K_{\textrm{div}}x^u\right)}. 
$$
The dynamics of both spinal processes constructed with these functions $\psi$ are summarized in Figure \ref{fig 1} where we use $r_{\psi} := \mathbbm{1}_{\left\{\psi = \psi_1\right\}} + r\mathbbm{1}_{\left\{\psi = \psi_2\right\}}$.}
\begin{figure}[h!]
{\newcolumntype{P}[1]{>{\centering\arraybackslash}p{#1}}
\renewcommand{\arraystretch}{1.5}
\begin{tabular}{ |P{2cm}|P{2cm}|P{1.5cm}|P{3cm}|P{2.5cm}|  }
 \hline
 & division rate &loss rate& distribution at division &  distribution at loss \\
 \hline
 Spine& $r_{\psi}$  & $d\widehat{N}_t$& $\widehat{\Lambda}^* \sim \beta(\alpha-1,\alpha-1)$ & $ \widehat{\Theta}^* \sim \beta(a,b)$\\
 Non-spinal & $r_{\psi}$  & $K_{\textrm{loss}}d\widehat{N}_t$& $\widehat{\Lambda} \sim \beta(\alpha-1,\alpha-1)$ & $ \widehat{\Theta} \sim \beta(a-1,b)$\\
 \hline
\end{tabular}}
\caption{Dynamics of the $\psi-$spinal processes}
\label{fig 1}
\end{figure}

\correct{In order to benchmark these two spinal methods and the Ogata's method, we estimate the mean size $\bar{x}$ of an individual picked at random in the population at time $t$, starting from a population $\bar{z}$ of size $N_0:=\langle z,1\rangle$ and total mass $B_0:=\int xz(\text{d}x)$. 
%Thus
%$$
%\bar{x}:=\mathbb{E}_{\bar{z}}\left[X^{U_t}_t\right]
%$$
%where $U_t$ is a label chosen uniformly in $\Zbar_t$. 
For $i\in\{1,2\}$, the $\psi_i$-spinal processes at any time $t$ are denoted by $\Zhat^i_t$ and we denote by $\widehat{N}^i_t := \langle \Zhat^i_t,1\rangle$ and $\widehat{B}^i_t := \int x\Zhat^i_t(\text{d}x)$ the size and total mass of the population $\widehat{\mathbb{G}}^i_t$ at any time $t$. We will also use the following notation for any $t$
$$
\Pi^i_t := \prod_{u\in\widehat{\mathbb{G}}^i_t}X^u_t.
$$
Theorem 2.2 applied with the $\psi$-spinal constructions gives
$$
\bar{x}:=\frac{B_0e^{\mu t}}{\left(r_{\psi_i}K_{\text{div}}\right)^{N_0}} \mathbb{E}_{\bar{z}}\left[\frac{(r_{\psi_i}K_{\text{div}})^{\widehat{N}^i_t}}{\widehat{N}^i_t} \exp\left(C^i_1\int_0^t\widehat{N}^i_s\text{d}s + C_2\int_0^t\left(\widehat{N}^i_s\right)^2\text{d}s-r\int_0^t\widehat{B}^i_s\text{d}s\right)\frac{\Pi^i_t}{\Pi_0}\right] 
$$
where
$$
C^i_1 := r_{\psi_i} - \mu-d\left(K_{\text{loss}}-1\right), \quad \text{and} \quad C_2 := d\left(K_{\text{loss}}-1\right).
$$
We developed python functions, based on the algorithm detailed in Appendix \ref{appendix algorithm}, to generate these trajectories. In this case, the difficulty in the implementation lies in optimizing the computation of the integral terms. We present in Appendix \ref{appendix algorithm} the formula used to compute these integral terms.
Figure \ref{fig:Algo compare} compares the running times $T_{\text{S}}$ of the two spinal methods with the running time $T_{\text{O}}$ of the Ogata's method for the estimation of $\bar{x}$ with $100.000$ sample paths for different sets of parameters. At each line of Figure \ref{fig:Algo compare}, we change one parameter at a time and the remaining parameters are fixed at a base value that changes at each row. The code used to generate this figure is available in \cite{github}.}
\begin{figure}[htbp]
\centering  
\includegraphics[width=1.0\textwidth]{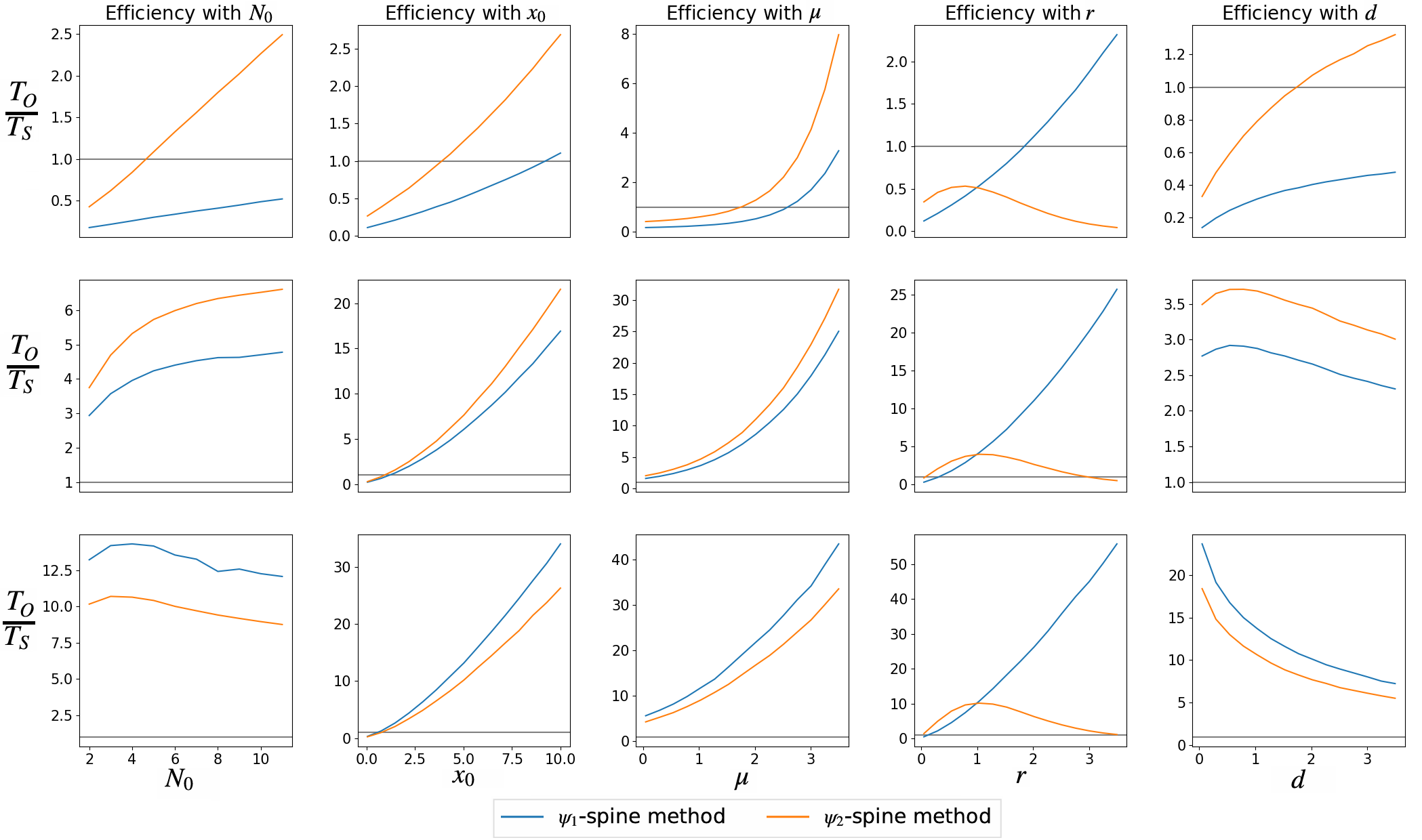}
\caption{Running time comparison}
\medskip
\small
Running time for generating 50000 exact trajectories of the population with the spinal method, normalized by the running time of Ogata's method. At each column, one parameter is modified and at each line, we change the parameters base values: from the first to the third line, $(N_0, x_0 , \mu, r, d)$ are respectively $(2, 0.5, 0.4, 0.4, 0.4)$, $(2, 3, 0.8, 0.8, 0.8)$ and $(2, 6, 1.2, 1.2, 1.2)$. The grey lines indicate a comparable efficiency between spinal and Ogata's methods.
\label{fig:Algo compare}
\end{figure}

\correct{Notice that the spine methods complexity does not depend on the individual traits, unlike Ogata's complexity that grows exponentially with the biomass. This point explains the exponential difference in complexity with the parameters $x_0$ and $\mu$. The evolution of the efficiency with the initial size is due to the fact that the division rate in the spine method does not depend on the size of the population. Also the division rate in the first spinal method is independent of $r$, explaining the evolution of the efficiency with the division parameter $r$. However this is not the case for the second spinal method that has a more intricate dependency on $r$. Finally, notice that the complexities of the spine methods with the loss parameter $d$ are linear, unlike the complexity of the Ogata's method. In fact, the number of generated Poisson points increases with $d$ but the probability of rejection of those points decreases with $d$, resulting in a complex dependency with the parameter $d$. 
Depending on the parameter values, using a spine method to sampling paths instead of the classical Ogata's one, can be exponentially faster.}

\section{Proofs} \label{section: proofs}

\subsection{Proof of Section \ref{section: model}} \label{section: proof model}
In this section, we derive the result on the existence and uniqueness of the considered branching process. 
\begin{proof}[Proof of Proposition \ref{prop: unique original}]
The description of the process $(\Zbar_t, t \geq0)$ introduced in Section \ref{section: model} leads to a canonical SDE driven by the dynamics of the trait and a multivariate point process. This SDE is then used to show that the mean number of individuals in the population and the trait are bounded at any time $t \geq 0$. Using Assumption \ref{assumption A}.2 we conclude that there is no explosion in finite time of the population, \correct{and thus, following the proof in \cite{FM04}, it is sufficient to establish uniqueness and existence of a solution of this SDE by construction.}
%Following \cite{BDMT2011, Marguet19}, for convenience, we introduce a vector $\boldsymbol{\theta}:=\left(\theta_i, i \in \mathbb{N}^*\right)$ of uniform independent random variables on $[0,1]$ and a family $\left(F_{i,n}, i\leq n, n\in\mathbb{N}^* \right)$ of measurable maps from $\mathcal{X}\times\Zset\times\mathbb{R}_+\times[0,1]$ to $\mathcal{X}$ such that for all $\left(n,x,\Z,t\right) \in \mathbb{N}^*\times\mathcal{X}\times\Zset\times\mathbb{R}_+$, the random vector $\left(F_{i,n}\left(x,\Z,t,\theta_i\right), i\leq n \right)$ is distributed as $K_n\left(x,\Z,t,\cdot\right)$. 
%Note that taking a unique random variable $\theta$ uniform on $[0,1]$ correlates the trait at birth between every child. 

We recall that the offspring traits at birth $\boldsymbol{y}=(y^1,\cdots,y^n)$ of an individual of trait $x$ in a population $\Z$ at time $t$ is given by the law $K_n(x,\Z,t,\boldsymbol{y})\mathcal{M}_n(\text{d}\boldsymbol{y})$. A classical way to define the associated SDE is to assume that the random traits depend only on a uniform random variable on $[0,1]$, see \cite{BDMT2011, Marguet19}. However this method correlates the trait at birth between every child. To solve this issue, we will introduce the measure $\mathcal{M}$ on $\mathbb{N}\times\mathcal{X}^{\mathbb{N}}$ such that 
$$
\mathcal{M}(\text{d}n,\text{d}\boldsymbol{y}) := \sum_{i\geq1}\delta_{i}(\text{d}n)\mathcal{M}_i(\text{d}\boldsymbol{y}).
$$

Let $E= \mathbb{R}_+ \times\mathcal{U} \times \mathbb{R}_+ \times \mathbb{N} \times \mathcal{X}^{\mathbb{N}}$ and $Q\left(\textrm{d}s,\textrm{d}u,\textrm{d}r,\textrm{d}n,\textrm{d}\boldsymbol{y} \right)$
be a Poisson point measure  on $E$ with intensity $q$ such that
$$q\left(\textrm{d}s,\textrm{d}u,\textrm{d}r,\textrm{d}n,\textrm{d}\boldsymbol{y} \right) = \textrm{d}s\left(\sum_{v\in\mathcal{U}}\delta_v(\textrm{d}u)\right)\textrm{d}r\mathcal{M}(\text{d}n,\text{d}\boldsymbol{y}).$$
We denote by $\left(\mathcal{F}_t, t\geq 0\right)$ the canonical filtration associated with this Poisson point measure and $\left(T_k, k\geq 0\right)$ the sequence of jumps times, that is the random sequence of times of arrivals given by the Poisson point measure $Q$.

Let $\bar{z} \in \Zbarset$. For every function $g \in \mathcal{C}^1\left(\mathcal{U}\times\mathcal{X},\mathbb{R} \right)$ and $t\geq0$, the process $\left(\Zbar_t, t\geq 0\right)$ starting from $\bar{z}$ verifies 
\begin{multline}\label{PPM original}
\langle\Zbar_t,g\rangle := \ \langle\bar{z},g\rangle + \int_0^t \int_{\mathcal{U}\times\mathcal{X}}\frac{\partial g}{\partial x}(u,x)\cdot \mu\left(x,\Z_s,s\right)\Zbar_s(\textrm{d}u,\textrm{d}x)\textrm{d}s \\
+\int_{E}
\mathbbm{1}_{\left\{u\in\mathbb{G}\left(\Zbar_{s\minus} \right)\right\}}\mathbbm{1}_{\left\{r \leq B_n(X^u_{s\minus},\Z_{s\minus},s)K_n(X^u_{s\minus},\Z_{s\minus},s,\boldsymbol{y})\right\}}\left(\sum_{i=1}^ng\left(ui,y^i\right)
 - g(u,X^u_{s\minus})\right)\\
\times  Q\left(\textrm{d}s,\textrm{d}u,\textrm{d}r,\textrm{d}n,\textrm{d}\boldsymbol{y} \right).
\end{multline}

Note that the dynamical construction of the marginal measure-valued process only does not ensure uniqueness: the individual for the next branching event is chosen according to its trait, and thus two individuals with the same trait can be indistinctly chosen to be the branching one. The labeling of individuals allows us to overcome this problem. However, any other labeling method could work, see \textit{e.g.} \cite{FM04}.
Here we consider non-bounded rates and we follow with small adaptations the proof of Lemma 2.5 in \cite{Marguet19}. Let $T>0$. We prove the non-accumulation of branching events on $[0,T]$. First we use equation \eqref{PPM original} applied to the constant function equal to 1, that gives the number of individuals in the population, denoted $(N_t, t\geq 0)$. Using Assumption \ref{assumption A}.3,  we have for all $t<T_k \wedge T$ 
$$
\mathbb{E}_{\bar{z}}\left[N_t \right] \leq N_0 + \int_0^t b_0(s)\left(\mathbb{E}_{\bar{z}}\Big[\sum_{u\in\mathbb{G}(t)}\left\vert X^u_s\right\vert \Big]+\mathbb{E}_{\bar{z}}\left[N_s \right]\right)\textrm{d}s.
$$
Next, we take a sequence of functions $(g_n,n\in\mathbb{N})$ where for all $n$, $g_n \in \mathcal{C}^1(\mathcal{U}\times\mathcal{X},\mathbb{R}_+)$ and $\lim_{n\rightarrow \infty}g_n(u,x) = \vert x\vert$ and $\lim_{n\rightarrow \infty}\frac{\partial g_n}{\partial x}(u,x)\cdot\mu(x,\Z,s) \leq \vert\mu(x,\Z,s)\vert$ for all $\Z,s$. Applying equation \eqref{PPM original} to these functions and using Assumptions \ref{assumption A}.1 and \ref{assumption A}.4, we have when $n \rightarrow +\infty$,
\begin{align*}
\mathbb{E}_{\bar{z}}\Big[\sum_{u\in\mathbb{G}(t)}\left\vert X^u_t\right\vert \Big] \leq& \langle \bar{z},\vert \cdot \vert\rangle + \int_0^t\mu_0(s)\left(\mathbb{E}_{\bar{z}}\Big[\sum_{u\in\mathbb{G}(s)}\left\vert X^u_s\right\vert \Big] + \mathbb{E}_{\bar{z}}\Big[\Big\vert\sum_{u\in\mathbb{G}(s)} X^u_s\Big\vert \Big] +\mathbb{E}_{\bar{z}}\left[N_s \right]\right)\textrm{d}s. 
\end{align*}
According to Grönwall's lemma, for all $t<T_k \wedge T$, we have
$$
\mathbb{E}_{\bar{z}}\left[N_t \right] + \mathbb{E}_{\bar{z}}\Big[\sum_{u\in\mathbb{G}(t)}\left\vert X^u_t\right\vert \Big] \leq \left(N_0 + \langle \bar{z},\vert \cdot \vert\rangle \right)e^{A(T)t} < \infty
$$
where $A(T) = \sup_{s\leq T} b_0(s)+\mu_0(s)$. The number of individuals is thus almost surely finite at finite time as well as the trait of every individual. Assumption \ref{assumption A}.2 ensures that in finite time, there is no accumulation of branching events that does not change the size of the population. 
\end{proof}
\subsection{Proof of Section \ref{section: Many-to-One formula}}\label{section: proof Many-to-One formula} 
Theorem \ref{thm:pdmc} is proved following the steps of the proof of Theorem 1 in \cite{B21}. Let $\psi \in \mathcal{D}$ \correct{satisfying} Assumption \ref{assumption B}, and $\left(\left(E_t,\Zbarhat_{t}\right), t\geq 0 \right)$ be the time-inhomogeneous $\Zbarhatset$-valued branching process of generators $\left(\widehat{\mathcal{L}}_{\psi}^t, t\geq 0\right)$, defined in \eqref{generator spinal}. We will first need to introduce some notations.
We will denote $(U_k, k\geq 0)$ the sequence of $\mathcal{U}$-valued random variables giving the label of the branching individuals at the jumps times $(T_k, k \geq 0)$. Let $(N_k, k\geq 0)$ be the sequence of $\mathbb{N}$-valued random variables giving the number of children at each branching event and we denote  for brevity $A_k := (U_k,N_k)$ for all $k\geq 0$. At the $k$-th branching time $T_k$, we denote $\mathcal{Y}_k$ the $\mathcal{X}^{N_k}$-valued random variable giving the vector of offspring traits. Finally we introduce for all $k \geq 0$, $\mathcal{V}_k:=\left(T_k,A_k,\mathcal{Y}_k\right)$. We similarly define  $((\widehat{T}_k,\widehat{U}_k,\widehat{N}_k,\widehat{\mathcal{Y}}_k), k \geq 0)$, the sequence of jumps times, labels of the branching individual, number of children and trait of these children at birth in the spinal construction. Notice that the distribution of number of children and traits depend on whether the branching individual is the spinal one or not. We will also use for all $k \geq 0$, $\widehat{\mathcal{V}}_k:=(\widehat{T}_k,\widehat{A}_k,\widehat{\mathcal{Y}}_k)$ where $\widehat{A}_k := (\widehat{U}_k,\widehat{N}_k)$. At time $s \in [\widehat{T}_{k-1},\widehat{T}_k)$, the label of the spinal individual is denoted $E_k$ and its trait $Y_k$. For a given initial population $\bar{z}= \sum_{i=1}^{n}\delta_{(i,x^i)} \in \Zbarset$, we use by convention $U_0=\emptyset, \ N_0=n, \ \mathcal{Y}_0=(x^i, 1 \leq i \leq n)$ almost surely. The same convention holds for the spine process. For all $0 \leq k $, we introduce the associated filtrations 
$$
\mathcal{F}_k = \sigma\left(\mathcal{V}_i, 0\leq i\leq k  \right),\ \textrm{and } \ 
\widehat{\mathcal{F}}_k = \sigma\left(E_k, \left(\widehat{\mathcal{V}}_i, 0\leq i\leq k\right)  \right).
$$
Notice that these notations, summarized in Figure \ref{schema spine}, are well-defined until the explosion time of the branching and spine processes and that the vectors $\left(\mathcal{V}_k, k\geq0  \right)$ where for all $k \geq 0$, $\mathcal{V}_k:=\left(T_k,A_k,\mathcal{Y}_k\right)$ characterize the trajectories of $(\Zbar_t, t\geq0)$ until the explosion time.

\begin{figure}[htbp]
\centering
\captionsetup{justification=centering}
\begin{tikzpicture}[thick,scale=0.95, every node/.style={scale=0.95}]
\draw[->][very thick] (0,0) -- (14.5,0);
\draw node at (1.5,5) {$\Big(\widehat{U}_{k-1},X^{\widehat{U}_{k-1}}_s\Big)$};
\draw node at (1.5,1.5) {$\left(E_{k-2},Y_s\right)$};
\draw (1.5,-0.15) -- (1.5, 0.15);
\draw node at (0.85,-0.5) {$\widehat{T}_{k-2} < s$};
\draw[red,thick] (2.5,1.5) -- (10,1.5);
\draw node at (5,-0.5) {$\widehat{T}_{k-1}$};
\draw[dotted] (5,0) -- (5, 1.55);
\draw[red] node at (5,1.49) {$\bullet$};
\draw node at (5,1.9) {$E_{k-2}=E_{k-1}$};
\draw[thick] (2.9,5) -- (5,5);

\draw node at (6.3,7) {$\left(\widehat{U}_{k-1}1, \widehat{\mathcal{Y}}_{k-1}^1\right)$};
\draw node at (6.3,5.5) {$\left(\widehat{U}_{k-1}2, \widehat{\mathcal{Y}}_{k-1}^2\right)$};
\draw node at (6.8,4) {$\Big(\widehat{U}_{k-1}\widehat{N}_{k-1}, \widehat{\mathcal{Y}}_{k-1}^{\widehat{N}_{k-1}}\Big)$};
\draw[thick] (5,3.5) -- (5,6.5);
\draw[thick] (5,6.5) -- (14,6.5);
\draw[thick] (5,5) -- (14,5);
\draw[thick] (5,3.5) -- (14,3.5);

\draw[red,thick] (10,2.25) -- (10,1.5);
\draw[thick] (10,0.75) -- (10,1.5);
\draw[red,thick] (10,2.25) -- (14,2.25);
\draw[thick] (10,0.75) -- (14,0.75);
\draw node at (11.05,2.75) {$\left(E_k, \widehat{\mathcal{Y}}_{k}^1\right)$};
\draw node at (11.3,1.25) {$\left(E_{k-1}2, \widehat{\mathcal{Y}}_{k}^2\right)$};

\draw (10,-0.15) -- (10,0.15);
%\draw (14,-0.15) -- (14,0.15);
\draw node at (10,-0.5) {$\widehat{T}_k$};
\draw node at (14.2,-0.5) {$t$};
\end{tikzpicture}
\caption{Sequential notations for the spine process.}\label{schema spine}
\end{figure}
For every initial population $\bar{z} \in \Zbarset$ and every $k\geq0$, we introduce the set of sequences of $k$ branching events that lead to non-extinguished trajectories $\mathfrak{U}_k(\bar{z}) \subset (\mathcal{U} \times \mathbb{N})^k$, starting from $\bar{z}$.
We also introduce for all $a \in \mathfrak{U}_k(\bar{z})$ and all $0 \leq i \leq k, \  \mathbb{G}_i(a)$ the set of labels of individuals living between the $i$-th and $(i+1)$-th branching events in the population where all the branching events were given by $a$. 
By decomposing the branching process $(\Zbar_t,t\geq0)$ based on the sequences $a \in \mathfrak{U}_k(\overline{z})$ and sampling at time $t$ an individual $e \in \mathbb{G}_k(a)$, we get that for every \correct{non-negative measurable} function $H$  on $\mathcal{U} \times \mathbb{D}\left([0,t],\Zset\right)$
\begin{multline}\label{eq: decomposition }
\mathbb{E}_{\bar{z}}\left[\mathbbm{1}_{\{T_{\textrm{Exp}} > t, \mathbb{G}(t) \neq \emptyset\}}H\left(U_t,(\Zbar_s,s\leq t)\right) \right] =\\ \sum_{k \geq 0}\sum_{a \in \mathfrak{U}_k(\bar{z})}\sum_{ e \in \mathbb{G}_k(a)} \mathbb{E}_{\bar{z}}\left[H\left(e,(\Zbar_s,s\leq t)\right)p_e(\Zbar_t)\mathbbm{1}_{\{ T_k \leq t < T_{k+1}\}}\prod_{i=0}^{k}\mathbbm{1}_{\{A_i = a_i\}} \right].
\end{multline}

The expectation on the right-hand side of \eqref{eq: decomposition } is linked by a Girsanov-type result to the  spinal construction, as shown in Lemma \ref{lemma induction}. The difference with our proof and the proof of Theorem 1 in \cite{B21} lies essentially in the demonstration of this lemma.  
\begin{lemma}\label{lemma induction}
$\left. \right.$\\
For any $k >0$, $\bar{z} \in \Zbarset$ and  $a = ((u_i,n_i), 0 \leq i \leq k) \in \mathfrak{U}_k(\bar{z})$, let $F$ be a \correct{non-negative measurable} function on $\Pi_{i=1}^k\left(\mathbb{R}_+ \times \mathcal{U}\times \mathbb{N} \times \mathcal{X}^{n_i}\right)$. For any $e \in \mathbb{G}_{k}\left(a\right)$,
\begin{multline*}\label{induction equation}
\mathbb{E}_{\bar{z}}  \left[F\left(\mathcal{V}_i, 0 \leq i \leq k\right)\prod_{i=0}^{k}\mathbbm{1}_{\{A_i = a_i\}}\right]= \left\langle z, \psi\left(\cdot,z,0\right) \right\rangle\\
\times \mathbb{E}_{\bar{z}} \Bigg[
\xi_k\left(E_k,\left(\widehat{\mathcal{V}}_i, 0 \leq i \leq k\right) \right) 
\left.F\left(\widehat{\mathcal{V}}_i, 0 \leq i\leq k\right) \mathbbm{1}_{\{E_{k} = e\}} \prod_{i=0}^{k}\mathbbm{1}_{\{\widehat{A}_i = a_i\}} \right],
\end{multline*}
where 
\begin{equation*}%\label{def xi_k}
\xi_k\left(E_k,\left(\widehat{\mathcal{V}}_i, 0 \leq i \leq k\right) \right) := \frac{1}{\psi\left(Y_{\widehat{T}_k},\Zhat_{\widehat{T}_k},\widehat{T}_k\right)}\exp\left(\int_{0}^{\widehat{T}_{k}}\frac{\mathcal{G}\psi\left(Y_s,\Zhat_s,s\right)}{\psi\left(Y_s,\Zhat_s,s\right)}\textrm{d}s\right).
\end{equation*}
\end{lemma}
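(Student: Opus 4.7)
My plan is to prove the identity by induction on $k$, following the scheme of \cite{B21}. On each interval $[\widehat T_{k-1},\widehat T_k)$ both processes flow deterministically via $\mu$, so the comparison of the two laws reduces to (i) a jump-time likelihood ratio of the form $\exp\bigl(\int(\widehat\tau_{\textrm{tot}}-\tau)\,\textrm{d}s\bigr)$ and (ii) a jump-distribution likelihood ratio that can be read off from the biased parameters of Dynamics \ref{spinal outside rates} and \ref{spine rates}. The base case corresponds to the initialisation: the spine label $E_0$ is sampled in $\bar z$ with probability $\psi_\phi(x^i,z,0)/\langle z,\psi_\phi(\cdot,z,0)\rangle$, and one checks that the prefactor $\langle z,\psi_\phi(\cdot,z,0)\rangle$ in the right-hand side cancels exactly this sampling probability against the $\psi_\phi(Y_0,\Zhat_0,0)^{-1}$ factor sitting inside $\xi_0$.

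\textbf{Inductive step.} Assuming the identity at rank $k-1$, I would condition on $\mathcal F_{k-1}$ and $\widehat{\mathcal F}_{k-1}$ to identify, via the Markov property, the conditional joint law of $\mathcal V_k=(T_k,A_k,\mathcal Y_k)$ under both dynamics. Under the original process it is the standard competing-exponential law with total rate $\tau(\Z,s):=\int\sum_n B_n(\rho)\Z(\textrm{d}x)$ and jump factor $B_{n_k}(\rho)K_{n_k}(\rho,\textrm{d}\mathbf y)$; under the spine law, $\tau$ is replaced by $\widehat\tau_{\textrm{tot}}$ and the jump factor by the biased spinal or non-spinal parameters from \eqref{def: K hat}--\eqref{def: K star hat}, weighted by \eqref{spine proba} in the spinal case. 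A brief algebraic manipulation using these definitions shows that the Radon-Nikodym ratio of the original law over the spine law at the $k$-th jump simplifies, in \emph{both} cases, to $\psi_\phi(Y_{T_k^-},\Zhat_{T_k^-},T_k)/\psi_\phi(Y_{T_k},\Zhat_{T_k},T_k)$. Collecting branching terms in \eqref{def: mathcal G} will then give the key identity
\[\frac{\mathcal G\psi_\phi(x_e,\Z,t)}{\psi_\phi(x_e,\Z,t)}=\frac{G\psi_\phi(x_e,\Z,t)}{\psi_\phi(x_e,\Z,t)}+\widehat\tau_{\textrm{tot}}(x_e,\Z,t)-\tau(\Z,t),\]
which combined with the flow ODE $\frac{\textrm{d}}{\textrm{d}s}\log\psi_\phi(Y_s,\Zhat_s,s)=G\psi_\phi/\psi_\phi$ turns the waiting-time likelihood ratio on $[T_{k-1},T_k)$ into $\exp\bigl(\int_{T_{k-1}}^{T_k}\mathcal G\psi_\phi/\psi_\phi\,\textrm{d}s\bigr)\cdot\psi_\phi(Y_{T_{k-1}},\Zhat_{T_{k-1}},T_{k-1})/\psi_\phi(Y_{T_k^-},\Zhat_{T_k^-},T_k)$. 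Multiplying by the jump ratio and by the inductive prefactor $\xi_{k-1}$, the $\psi_\phi$-factors telescope across $T_k$ to reproduce exactly $\xi_k$, closing the induction.

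\textbf{Main obstacle.} The delicate point is the bookkeeping at a spinal branching event, where both the offspring kernel $\widehat K^*_{n_k}$ is biased by $\sum_i\psi_\phi(y^i,\cdot,\cdot)$ \emph{and} the new spine is reselected via \eqref{spine proba}; a priori these two effects could produce a jump of $\psi_\phi(Y_\cdot,\Zhat_\cdot,\cdot)$ different from the non-spinal case and break the unified formula for the jump ratio. The whole argument hinges on the observation that the two $\sum_i\psi_\phi$-denominators cancel each other, so that the jump ratio reduces to the same $\psi_\phi$-ratio as for non-spinal events; once this uniformisation is established, the telescoping above is immediate.
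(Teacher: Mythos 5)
Your proposal follows essentially the same route as the paper's proof: induction on the number of branching events, comparison of the conditional one-step laws of $(T_k,A_k,\mathcal{Y}_k)$ under both dynamics, the cancellation of the $\widehat{\Gamma}^*_{n}$ and $\sum_i\psi_{\phi}$ factors at spinal branching events so that the jump ratio takes the same $\psi_{\phi}$-ratio form as off the spine, and the identity $[\mathcal{G}-G]\psi_{\phi}/\psi_{\phi}=\widehat{\tau}_{\textrm{tot}}-\tau$ combined with the fundamental theorem of calculus (the paper's Lemma \ref{chain rule}) to telescope the $\psi_{\phi}$-ratios and waiting-time exponentials into $\xi_k$. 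The only step the paper makes explicit that you leave implicit is that the induction is run on test functions of product form $F^{a'}_{k-1}\cdot I(t_k-t_{k-1})\cdot F(y_k)$ and then extended to general measurable $F$ by a monotone class argument, which is a routine completion of your argument.
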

We prove this lemma by induction on the number of branching events $k$. We will first state a technical lemma to avoid too long computations. 
\begin{lemma}\label{chain rule}
For all $k >0$, $\bar{z} \in \Zbarset$ and  $a = ((u_i,n_i), 0 \leq i \leq k) \in \mathfrak{U}_k(\bar{z})$,
\begin{equation*}
\frac{\xi_k\left(E_k,\left(\widehat{\mathcal{V}}_i, 0 \leq i \leq k\right) \right)}{\xi_{k-1}\left(E_{k-1},\left(\widehat{\mathcal{V}}_i, 0 \leq i \leq k-1\right) \right)} = \frac{\psi\left(Y_{\widehat{T}_k^{\minus}},\Zhat_{\widehat{T}_k^{\minus}},\widehat{T}_k\right)}{\psi\left(Y_{\widehat{T}_k},\Zhat_{\widehat{T}_k},\widehat{T}_k\right)}\exp\left(\int_{\widehat{T}_{k-1}}^{\widehat{T}_{k}}\lambda\left(Y_s,\Zhat_s,s\right)\textrm{d}s\right) \quad \textrm{a.s.}
\end{equation*} 
where, for all $s \in [\widehat{T}_{k-1},\widehat{T}_k)$,
\begin{equation}\label{eq lambda}
\lambda\left(Y_s,\Zhat_s,s\right) := \widehat{\tau}_{tot}\left(Y_s,\Zhat_s,s\right) - \int_{\mathcal{X}}B\left(x,\Zhat_s,s\right)\Zhat_s(\textrm{d}x).
\end{equation}
\end{lemma}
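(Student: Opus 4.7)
The plan is to start directly from the definition of $\xi_k$. Taking the ratio with $\xi_{k-1}$ telescopes the exponential integrals over $[0,\widehat{T}_{k-1}]$ and gives
\[
\frac{\xi_k}{\xi_{k-1}} = \frac{\psi_{\phi}\bigl(Y_{\widehat{T}_{k-1}},\Zhat_{\widehat{T}_{k-1}},\widehat{T}_{k-1}\bigr)}{\psi_{\phi}\bigl(Y_{\widehat{T}_k},\Zhat_{\widehat{T}_k},\widehat{T}_k\bigr)} \exp\biggl(\int_{\widehat{T}_{k-1}}^{\widehat{T}_k} \frac{\mathcal{G}\psi_{\phi}(Y_s,\Zhat_s,s)}{\psi_{\phi}(Y_s,\Zhat_s,s)}\,\textrm{d}s\biggr).
\]
I will then split $\mathcal{G} = G + (\mathcal{G}-G)$ under the integral and treat the two pieces separately: the $G$-piece will be absorbed into a pre-jump value of $\psi_{\phi}$ via a deterministic chain rule, while the $(\mathcal{G}-G)$-piece will be matched with $\lambda$ through the definitions of the biased branching parameters.

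For the first piece, I would use that on $(\widehat{T}_{k-1},\widehat{T}_k)$ no branching event occurs, so the spinal trait $Y_s$ and every non-spinal trait evolve deterministically according to $\mu$. The $\mathcal{C}^1$-regularity of $(f,F)$ built into the definition of $\mathcal{D}$, combined with the expression of $G$ in \eqref{def: G}, would then give by the chain rule
\[
\frac{\textrm{d}}{\textrm{d}s}\,\psi_{\phi}(Y_s,\Zhat_s,s) = G\psi_{\phi}(Y_s,\Zhat_s,s) \qquad \textrm{on } (\widehat{T}_{k-1},\widehat{T}_k).
\]
Integrating and exponentiating would yield
\[
\frac{\psi_{\phi}(Y_{\widehat{T}_k^{\minus}},\Zhat_{\widehat{T}_k^{\minus}},\widehat{T}_k)}{\psi_{\phi}(Y_{\widehat{T}_{k-1}},\Zhat_{\widehat{T}_{k-1}},\widehat{T}_{k-1})} = \exp\biggl(\int_{\widehat{T}_{k-1}}^{\widehat{T}_k} \frac{G\psi_{\phi}(Y_s,\Zhat_s,s)}{\psi_{\phi}(Y_s,\Zhat_s,s)}\,\textrm{d}s\biggr),
\]
which, inserted into the expression for $\xi_k/\xi_{k-1}$, reduces the claim to the pointwise identity $(\mathcal{G}-G)\psi_{\phi}(x_e,\Z,t)/\psi_{\phi}(x_e,\Z,t) = \widehat{\tau}_{\textrm{tot}}(x_e,\Z,t) - \int_{\mathcal{X}}B(x,\Z,t)\,\Z(\textrm{d}x)$ on $\Zhatset\times\mathbb{R}_+$.

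This last identity is a mechanical check from the definitions of the biased branching rates. I would expand $(\mathcal{G}-G)\psi_{\phi}(x_e,\Z,t)$ via \eqref{def: mathcal G}, divide by $\psi_{\phi}(x_e,\Z,t)$, and identify the resulting summands term by term: the spine-branching contribution $\sum_n B_n(\rho_e)\int_{\mathcal{X}^n}\sum_i\psi_{\phi}(y^i,\Z_+(x_e,\mathbf{y}),t)K_n(\rho_e,\textrm{d}\mathbf{y})/\psi_{\phi}(x_e,\Z,t)$ is exactly $\sum_n\widehat{B}^*_n(\rho_e)$ by \eqref{def: K star hat} and Dynamics~\ref{spine rates}; its subtraction becomes $-\sum_n\widehat{B}_n(x_e,\rho_e)$ by \eqref{def: K hat} and Dynamics~\ref{spinal outside rates}; the outer-integral weighted piece yields $\int_{\mathcal{X}}\sum_n\widehat{B}_n(x_e,\rho)\,\Z(\textrm{d}x)$; and the outer-integral constant piece contributes $-\int_{\mathcal{X}}B(x,\Z,t)\,\Z(\textrm{d}x)$ after using $\sum_n B_n = B$ and $K_n(\rho,\mathcal{X}^n)=1$. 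Summing over $n$ and invoking the definitions of $\widehat{\tau}$ and $\widehat{\tau}_{\textrm{tot}}$, the first three contributions collapse into $\widehat{\tau}_{\textrm{tot}}(\rho_e)$, which gives the desired identity. The only delicate bookkeeping point is the double role of the spine in the $\int_{\mathcal{X}}\cdots\Z(\textrm{d}x)$ term of $\mathcal{G}$: since $\Z$ already contains $x_e$, a generic branching event at trait $x_e$ is counted there in addition to the dedicated spine-branching term, but this is precisely compensated by the $-\sum_n\widehat{B}_n(x_e,\rho_e)$ correction built into the definition of $\widehat{\tau}$.
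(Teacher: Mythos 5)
Your proposal is correct and follows essentially the same route as the paper's proof: telescope the ratio of $\xi_k/\xi_{k-1}$, split $\mathcal{G}=G+(\mathcal{G}-G)$, integrate the logarithmic derivative of $\psi_{\phi}$ along the deterministic flow between consecutive jump times (the paper phrases this via $G(\ln\circ\psi_{\phi})=G\psi_{\phi}/\psi_{\phi}$ and the fundamental theorem of calculus), and check the pointwise identity $(\mathcal{G}-G)\psi_{\phi}/\psi_{\phi}=\widehat{\tau}_{\textrm{tot}}-\int_{\mathcal{X}}B(x,\cdot)\,\Z(\textrm{d}x)$ from the definitions of the biased branching rates. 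Your term-by-term verification of that identity, which the paper only asserts as a remark, is accurate, including the observation that the spine's double counting in the $\int_{\mathcal{X}}\cdots\Z(\textrm{d}x)$ term is exactly compensated by the $-\sum_n\widehat{B}_n(x_e,\rho_e)$ correction in $\widehat{\tau}$.
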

We recall that $\widehat{\tau}_{tot}$ is the total branching rate of the spine process defined in \eqref{tau hat tot}. The proof of this lemma is in Appendix \ref{appendix chain rule}.
\begin{proof}[Proof of Lemma \ref{lemma induction}]
Following the proof of Theorem 1 in \cite{B21}, the result is established by induction on the number $k$ of branching events. The original branching process and its associated spine process may stop branching in finite time, in this case the total numbers of branching events, respectively $N_{\textrm{tot}}$ and $\widehat{N}_{\textrm{tot}}$, are finite. For all $k$ such that $k>N_{\textrm{tot}}$, the $k$-th branching event of the original construction arrives at $T_k = +\infty$. In this case we set $A_k = (\emptyset,0)$ by convention. The same convention is used for the spinal construction.
Let $\bar{z} := \sum_{i=1}^{n}\delta_{(i,x^i)} \in\Zbarset$ be the initial population and let $F$ be a \correct{non-negative measurable} function on $\mathbb{R}_+ \times \mathcal{U}\times \mathbb{N} \times \mathcal{X}^{n}$. 
Then, by definition, $\mathcal{V}_0 = \widehat{\mathcal{V}}_0 = (0,(\emptyset,n),(x^i, 1 \leq i \leq n))$ almost surely.

Therefore, for all $e \in \mathbb{G}(0)$
\begin{align*}
\mathbb{E}_{\bar{z}}\left[\xi_0\left(E_{0},\widehat{\mathcal{V}}_0\right)F\left(\widehat{\mathcal{V}}_0\right) \mathbbm{1}_{\{E_{0} = e\}}\mathbbm{1}_{\{\widehat{A}_0 = (\emptyset,n)\}}\right]&=  F\left(\mathcal{V}_0\right)\mathbbm{1}_{\{A_0 = (\emptyset,n)\}}\mathbb{E}_{\bar{z}}\left[\frac{\mathbbm{1}_{\{E_{0} = e\}}}{\psi(Y_0,z,0)} \right].
\end{align*} 
We recall that the individual $1 \leq i \leq n$ of trait $x^i$ in the population $\bar{z}$ is chosen to be the spinal individual with probability $\psi(x^i,z,0)(\langle z, \psi(\cdot,z,0) \rangle)^{-1}$. Then we have 
$$
\mathbb{E}_{\bar{z}}\left[\xi_0\left(E_{0},\widehat{\mathcal{V}}_0\right)F\left(\widehat{\mathcal{V}}_0\right) \mathbbm{1}_{\{E_{0} = e\}}\mathbbm{1}_{\{\widehat{A}_0 = (\emptyset,n)\}}\right]=  \frac{1}{\langle z, \psi\left(\cdot,z,0\right) \rangle}\mathbb{E}_{\bar{z}}\left[\mathbbm{1}_{\{A_0 = (\emptyset,n)\}}F\left(0,A_0,\mathcal{Y}_0\right)\right].
$$
Thus the result holds for $k =0$. Now let $k \geq 1$ and assume that the following induction hypothesis holds at rank $k-1$.\newline
\textbf{Induction Hypothesis.}\textit{
For every $a=\left(a_i, 0 \leq i\leq k-1\right)\in \left(\mathcal{U}\times \mathbb{N}\right)^{k-1}$ with $a_i = (u_i,n_i)$, every \correct{non-negative measurable} function $F$ on $\underset{i=1}{\overset{k-1}{\bigotimes}}\left(\mathbb{R}_+ \times \mathcal{U}\times \mathbb{N} \times \mathcal{X}^{n_i}\right)$ and every label $e \in \mathbb{G}_{k-1}\left(a\right)$:
\begin{multline}\label{eq: induction n-1}
\mathbb{E}_{\bar{z}}\left[F\left(\mathcal{V}_i, 0 \leq i \leq k-1\right)\prod_{i=0}^{k-1}\mathbbm{1}_{\{A_i = a_i\}}\right]= \left\langle z, \psi\left(\cdot,z\right) \right\rangle \\ 
\times\mathbb{E}_{\bar{z}} \left[
\xi_{k-1}\left(E_{k-1},\left(\widehat{\mathcal{V}}_i, 0 \leq i \leq k-1 \right)\right)   
F\left(\widehat{\mathcal{V}}_i, 0 \leq i\leq k-1\right) 
 \mathbbm{1}_{\{E_{k-1} = e\}}\prod_{i=0}^{k-1}\mathbbm{1}_{\{A_i = a_i\}} \right].
\end{multline}
}
Let $a = \left(a_i, 0 \leq i\leq k\right) \in \left(\mathcal{U}\times \mathbb{N}\right)^n$ with $a_i = (u_i,n_i)$ and $e \in \mathbb{G}_{k}\left(a\right)$. We denote $a'=(a_i, 0 \leq i \leq k-1)$ and take $F_k^a$ a \correct{non-negative measurable} function on $\underset{i=1}{\overset{k}{\bigotimes}}\left(\mathbb{R}_+ \times \mathcal{X}^{n_i}\right)$ such that, for all $\left(\left(t_i,y_i\right), 0 \leq i \leq k\right) \in \underset{i=1}{\overset{n}{\bigotimes}}\left(\mathbb{R}_+ \times \mathcal{X}^{n_i}\right)$:
\begin{equation}\label{eq: Gka}
 F_k^a\left(\left(t_i,y_i\right), 0 \leq i \leq n\right) := F^{a'}_{k-1}\left(\left(t_i,y_i\right), 0 \leq i \leq k-1\right)I(t_k-t_{k-1})F(y_k),
\end{equation}  
where $F^{a'}_{k-1}, I$ and $F$ are \correct{non-negative measurable} and bounded functions, respectively on $\underset{i=1}{\overset{k-1}{\bigotimes}}\left(\mathbb{R}_+ \times \mathcal{X}^{n_i}\right)$, $\mathbb{R}_+$ and $\mathcal{X}^{n_k}$. 
\correct{We denote by $e'\in \mathbb{G}_{k-1}\left(a'\right)$ the deterministic label of the ancestor of $e$ just before the $k$-th jump. Thus there exists a unique $j\in\mathbb{N}^*\cup\{\emptyset\}$ \correct{satisfying} $e'j=e$. If $j\neq\emptyset$, it means that $e'$ is the branching individual in the $k$-th jump, otherwise it is any other individual in the population.}  We introduce the $\mathbb{N}^*\cup\{\emptyset\}$-valued \correct{random variable} $J_k$ choosing the label of the spinal individual at the $k$-th branching event, so that $E_k = E_{k-1}J_{k}$ almost surely.

Following the proof of Lemma 1 in \cite{B21}, we express both sides of the equality \eqref{eq: induction n-1} for $k \geq 1$ conditionally on the filtrations at the previous step $k-1$. We recall that, for all $1 \leq j \leq k$
$$
\mathcal{F}_j = \sigma\left(\mathcal{V}_i, 0\leq i\leq j \right),\ \textrm{and } \ 
\widehat{\mathcal{F}}_j = \sigma\left(E_j, \left(\widehat{\mathcal{V}}_i, 0\leq i\leq j\right)  \right).
$$ 
Using \eqref{eq: Gka} and conditioning on the filtration after the $(k-1)$-th branching event, we have
\begin{multline}\label{eq rec original}
\mathbb{E}_{\bar{z}} \left[F^a_k\left(\left(T_i,\mathcal{Y}_i\right), 0 \leq i \leq k\right)\prod_{i=0}^{k}\mathbbm{1}_{\{A_i = a_i\}}\right] = \\ \mathbb{E}_{\bar{z}}\left[F^{a'}_{k-1}\left(\left(T_i,\mathcal{Y}_i\right), 0 \leq i \leq k-1\right)C\left(\mathcal{V}_i, 0 \leq i \leq k-1\right)\prod_{i=0}^{k-1}\mathbbm{1}_{\{A_i = a_i\}} \right], 
\end{multline}
where:
\begin{equation}\label{def C}
C\left(\mathcal{V}_i, 0 \leq i \leq k-1\right) := \mathbb{E}\left[ I\left(T_k - T_{k-1} \right)F(\mathcal{Y}_k)\mathbbm{1}_{\{A_k = a_k\}} \big\vert \mathcal{F}_{k-1} \right].
\end{equation}
We apply the induction hypothesis \eqref{eq: induction n-1} in the previous equation \eqref{eq rec original} with $H$ given by
\begin{equation*}
H\left(\mathcal{V}_i, 0 \leq i \leq k-1\right) := F^{a'}_{k-1}\left(\left(T_i,\mathcal{Y}_i\right), 0 \leq i \leq k-1\right)C\left(\mathcal{V}_i, 0 \leq i \leq k-1\right).
\end{equation*}
Thus 
\begin{multline}\label{eq branching}
\mathbb{E}_{\bar{z}}\left[F^a_k\left(\left(T_i,\mathcal{Y}_i\right), 0 \leq i \leq k\right)\prod_{i=0}^{k}\mathbbm{1}_{\{A_i = a_i\}}\right]= \left\langle z, \psi\left(\cdot,z,0\right) \right\rangle \\
\times \mathbb{E}_{\bar{z}} \Bigg[
\xi_{k-1}\left(E_{k-1},\left(\widehat{\mathcal{V}}_i, 0 \leq i \leq k-1 \right)\right)  F^{a'}_{k-1}\left(\left(\widehat{T}_i,\widehat{\mathcal{Y}}_i\right), 0 \leq i \leq k-1\right) \\
\times C\left(\widehat{\mathcal{V}}_i, 0 \leq i\leq k-1\right) 
 \mathbbm{1}_{\{E_{k-1} = e'\}}\prod_{i=0}^{k-1}\mathbbm{1}_{\{A_i = a_i\}} \Bigg].
\end{multline}

Similarly, we use \eqref{eq: Gka} for the spine process. Conditioning on the filtration after the $(k-1)$-th branching event, we have
\begin{multline}\label{eq induction}
\mathbb{E}_{\bar{z}} \Bigg[ 
\xi_k\left(E_{k},\left(\widehat{\mathcal{V}}_i, 0 \leq i \leq k \right) \right)
F^a_k\left(\left(\widehat{T}_i,\widehat{\mathcal{Y}}_i\right), 0 \leq i\leq k\right) \mathbbm{1}_{\{E_{k} = e\}} \prod_{i=0}^k\mathbbm{1}_{\{\widehat{A}_i = a_i\}} \Bigg]=\\
 \mathbb{E}\Big[ \xi_{k-1}\left(E_{k-1},\left(\widehat{\mathcal{V}}_i, 0 \leq i \leq k-1 \right) \right)
F^{a'}_{k-1}\left(\left(\widehat{T}_i,\widehat{\mathcal{Y}}_i\right), 0 \leq i\leq k-1\right) \\
\times \widehat{C}\left(e',\left(\widehat{\mathcal{V}}_i, 0 \leq i \leq k-1 \right) \right)\prod_{i=0}^{k-1}\mathbbm{1}_{\{\widehat{A}_i = a_i\}}\Big], 
\end{multline}
where, using Lemma \ref{chain rule},
\begin{multline}\label{def C hat}
\widehat{C}\left(e',\left(\widehat{\mathcal{V}}_i, 0 \leq i \leq k-1 \right) \right) :=  \mathbbm{1}_{\{E_{k-1} = e'\}}\\
\times\mathbb{E}\left[\frac{\psi\left(Y_{\widehat{T}_k^-},\Zhat_{\widehat{T}_k^-},\widehat{T}_k\right)}{\psi\left(Y_{\widehat{T}_k},\Zhat_{\widehat{T}_k},\widehat{T}_k\right)}\exp\left(\int_{\widehat{T}_{k-1}}^{\widehat{T}_{k}}\lambda\left(Y_s,\Zhat_s,s\right)\textrm{d}s\right) \mathbbm{1}_{\{J_{k} = j\}}I\left(\widehat{T}_k -\widehat{T}_{k-1} \right) \right. \\
\times F\left(\widehat{\mathcal{Y}}_k\right)\mathbbm{1}_{\{\widehat{A}_k = a_k\}} \left| \widehat{\mathcal{F}}_{k-1}\right. \Bigg].
\end{multline}
If we can establish that
\begin{equation}\label{eq: C C hat}
\widehat{C}\left(e',\left(\widehat{\mathcal{V}}_i,0\leq i\leq k-1\right)\right)= \mathbbm{1}_{\{E_{k-1}=e'\}} C \left(\left(\widehat{\mathcal{V}}_i,0\leq i\leq k-1\right)\right),
\end{equation}
then the expectations in the right-hand sides of equations \eqref{eq branching} and \eqref{eq induction} are equal and we get 
\begin{multline*}
\mathbb{E}_{\bar{z}}\left[F^a_k\left(\left(T_i,\mathcal{Y}_i\right), 0 \leq i \leq k\right)\prod_{i=0}^{k}\mathbbm{1}_{\{A_i = a_i\}}\right]= \left\langle z, \psi\left(\cdot,z,0\right) \right\rangle \\
\times\mathbb{E}_{\bar{z}} \Bigg[ 
\xi_k\left(E_{k},\left(\widehat{\mathcal{V}}_i, 0 \leq i \leq k \right) \right)
F^a_k\left(\left(\widehat{T}_i,\widehat{\mathcal{Y}}_i\right), 0 \leq i\leq k\right) \mathbbm{1}_{\{E_{k} = e\}} \prod_{i=0}^k\mathbbm{1}_{\{\widehat{A}_i = a_i\}} \Bigg].
\end{multline*}
From this equality and using a monotone class argument for the functions $F_k^a$ defined in \eqref{eq: Gka}, we obtain \eqref{eq: induction n-1} at rank $k$, that concludes the proof.

In the following we show \eqref{eq: C C hat}, by describing the dynamics of both processes.\newline
\textbf{Computations for the branching process.} Conditioning in chain the expression of $C$ defined in \eqref{def C}, we get
\begin{multline*}
C\left(\mathcal{V}_i, 0 \leq i \leq k-1\right) = \\
\mathbb{E}\Big[ I(T_k - T_{k-1})\mathbb{E}\Big[\mathbbm{1}_{\{A_k = a_k\}}\mathbb{E}\big[F(\mathcal{Y}_k)\big\vert \mathcal{F}_{k-1}, T_k,A_k \big]\Big\vert \mathcal{F}_{k-1}, T_k \Big]\Big\vert \mathcal{F}_{k-1}\Big].
\end{multline*}
Using the conditional distribution of $\mathcal{Y}_n$, we have
$$
\mathbb{E}\left[F(\mathcal{Y}_k)\big\vert \mathcal{F}_{k-1}, T_k,A_k \right]=
 \int_{\mathcal{X}^{N_k}} F(\boldsymbol{y}) K_{N_k}\left(X^{U_k}_{T_k^-},\Z_{T_k^-},T_k,\boldsymbol{y}\right)\mathcal{M}_{N_k}(\text{d}\boldsymbol{y}).
$$
Then we notice that for $a_k = (u_k,n_k)$,
$$
\mathbb{E}\left[\mathbbm{1}_{\{A_k = a_k\}} \big\vert \mathcal{F}_{k-1}, T_k \right] = \frac{B_{n_k}\left(X^{u_k}_{T_k^-},\Z_{T_k^-},T_k\right)}{\tau\left(\Z_{T_k^-},T_k\right)},
$$ 
where $\tau\left(\Z,s\right) := \int_{\mathcal{X}}B(x,\Z,s)\Z(\textrm{d}x)$ is the total branching rate.
Finally, using that the time between two jumps follows an inhomogeneous exponential law of instantaneous rate $\tau(\cdot)$, we have
\begin{multline*}
C\left(\mathcal{V}_i, 0 \leq i \leq k-1\right) = \int_{T_{k-1}}^{+\infty} I(t-T_{k-1})\exp\left(-\int_{T_{k-1}}^{t}\tau\left(\Z_s,s\right)\textrm{d}s\right)\\ \times  B_{n_k}\left(X^{u_k}_{t},\Z_{t},t\right)\int_{\mathcal{X}^{n_k}} F(\boldsymbol{y})K_{n_k}\left(X^{u_k}_{t},\Z_{t},t,\boldsymbol{y}\right)\mathcal{M}_{n_k}(\text{d}\boldsymbol{y}) \textrm{d}t.
\end{multline*}
\newline
\textbf{Computations for the spinal construction.}
We follow the computations for the branching process, and \correct{distinguish} between whether the branching individual is the spinal one or not.
Conditioning on the next jump time, $\widehat{C}$ defined in \eqref{def C hat}, is such that
\begin{multline}\label{eq C hat}
\widehat{C}\left(e',\left(\widehat{\mathcal{V}}_i,0\leq i\leq k-1\right)\right) =  \mathbbm{1}_{\{E_{k-1} = e'\}}\mathbb{E}\left[ \left.\exp\left(\int_{\widehat{T}_{k-1}}^{\widehat{T}_{k}}\lambda\left(Y_s,\Zhat_s,s\right)\textrm{d}s\right)I\left(\widehat{T}_k - \widehat{T}_{k-1} \right)\right. \right. \\
\times  \left.\left.\psi\left(Y_{\widehat{T}_k^-},\Zhat_{\widehat{T}_k^-},\widehat{T}_k\right)\mathbb{E}\left[ \left.\frac{F\left(\widehat{\mathcal{Y}}_k\right)\mathbbm{1}_{\{J_{k} = j\}}\mathbbm{1}_{\{\widehat{A}_k = a_k\}}}{\psi\left(Y_{\widehat{T}_k},\Zhat_{\widehat{T}_k},\widehat{T}_k\right)} \right| \widehat{\mathcal{F}}_{k-1},\widehat{T}_k \right]\right| \widehat{\mathcal{F}}_{k-1} \right].
\end{multline}
We handle this last equation by complete induction, on the event $\{j=\emptyset \} \cup \{j \neq \emptyset\}$, to show that 
\begin{multline}\label{eq exhasution}
\psi\left(Y_{\widehat{T}_k^-},\Zhat_{\widehat{T}_k^-},\widehat{T}_k\right)\mathbb{E}\left[ \left.\frac{F\left(\widehat{\mathcal{Y}}_k\right)\mathbbm{1}_{\{J_{k} = j\}}\mathbbm{1}_{\{\widehat{A}_k = a_k\}}}{\psi\left(Y_{\widehat{T}_k},\Zhat_{\widehat{T}_k},\widehat{T}_k\right)} \right| \widehat{\mathcal{F}}_{k-1},\widehat{T}_k \right] = \\
\frac{B_{n_k}\left(X^{u_k}_{T_k^-},\Zhat_{\widehat{T}_k^-},\widehat{T}_k\right)}{\widehat{\tau}_{\textrm{tot}}\left(\Zhat_{\widehat{T}_k^-},\widehat{T}_k\right)} \int_{\mathcal{X}^{\widehat{N}_k}} 
F\left(\boldsymbol{y}\right)K_{\widehat{N}_k}\left(X^{\widehat{U}_k}_{\widehat{T}_k^-},\Zhat_{\widehat{T}_k^-},\widehat{T}_k,\boldsymbol{y}\right)\mathcal{M}_n(\text{d}\boldsymbol{y}).
\end{multline}

\textit{Branching outside the spine.}
If $j=\emptyset$, then the branching individual is not the spinal one, and $e = e'j = e'$ . We follow the same conditioning than for the branching process, and use the fact that on the event $\{j=\emptyset\}$ the trait of the spinal individual is $\widehat{\mathcal{F}}_{k-1}$-measurable. Using the expression of the $\psi$-biased distribution $\widehat{K}$ of $\widehat{\mathcal{Y}}_k$, defined in \eqref{def: K hat}, we get
\begin{multline*}
\mathbb{E}\left[ \left.\left.\frac{F\left(\widehat{\mathcal{Y}}_k\right)\mathbbm{1}_{\{J_{k} = \emptyset\}}\mathbbm{1}_{\{\widehat{A}_k = a_k\}}}{\psi\left(Y_{\widehat{T}_k},\Zhat_{\widehat{T}_k},\widehat{T}_k\right)} \right| \widehat{\mathcal{F}}_{k-1},\widehat{T}_k \right]= \mathbb{E}\right[\mathbbm{1}_{\{\widehat{A}_k = a_k\}}\mathbbm{1}_{\{J_{k} = \emptyset\}} \\
\left.\left.\times \int_{\mathcal{X}^{\widehat{N}_k}} 
\frac{F\left(\boldsymbol{y}\right)}{\widehat{\Gamma}_{n_k}\left(Y_{\widehat{T}_k^-},X^{u_k}_{\widehat{T}_k^-},\Zhat_{\widehat{T}_k^-},\widehat{T}_k\right)}K_{n_k}\left(X^{\widehat{U}_k}_{\widehat{T}_k^-},\Zhat_{\widehat{T}_k^-},\widehat{T}_k,\boldsymbol{y}\right)\mathcal{M}_n(\text{d}\boldsymbol{y})\right| \widehat{\mathcal{F}}_{k-1},\widehat{T}_k \right].
\end{multline*}
We then recall the distribution of $\widehat{A}_k$ outside the spine, established in Dynamics \ref{spinal outside rates} :
$$
\mathbb{E}\left[\mathbbm{1}_{\{\widehat{A}_k = a_k\}}\mathbbm{1}_{\{J_{k} = \emptyset\}} \big\vert \widehat{\mathcal{F}}_{k-1}, \widehat{T}_k \right] = \frac{\widehat{\Gamma}_{n_k}\left(Y_{\widehat{T}_k^-},X^{u_k}_{\widehat{T}_k^-},\Zhat_{\widehat{T}_k^-},\widehat{T}_k\right)}{\psi\left(Y_{\widehat{T}_k^-},\Zhat_{\widehat{T}_k^-},\widehat{T}_k \right)} \frac{B_{n_k}\left(X^{u_k}_{T_k^-},\Zhat_{\widehat{T}_k^-},\widehat{T}_k\right)}{\widehat{\tau}_{\textrm{tot}}\left(\Zhat_{\widehat{T}_k^-},\widehat{T}_k\right)}.
$$ 

This gives \eqref{eq exhasution} on the event $\{j=\emptyset\}$.

\textit{Spine branching.} We follow the same computations when $j \neq \emptyset$, that corresponds to the case when the branching individual is the spinal one, i.e $u_k = e'$. In this case, the distribution of $Y_{\widehat{T}_k}$ now depends on the traits $\widehat{\mathcal{Y}}_k$. Thus conditioning on $\widehat{\mathcal{Y}}_k$ and using the distribution of the next spinal individual, defined in \eqref{spine proba}, we have 
\begin{multline*}
\mathbbm{1}_{\{j \neq \emptyset\}}\mathbb{E}\left[ \left.\left.\frac{F\left(\widehat{\mathcal{Y}}_k\right)\mathbbm{1}_{\{J_{k} = j\}}\mathbbm{1}_{\{\widehat{A}_k = a_k\}}}{\psi\left(Y_{\widehat{T}_k},\Zhat_{\widehat{T}_k},\widehat{T}_k\right)} \right| \widehat{\mathcal{F}}_{k-1},\widehat{T}_k \right]=\mathbb{E}\right[ \mathbbm{1}_{\{\widehat{A}_k = (e',n_k)\}}\\
\times\left.\left.\mathbb{E}\left[ \left.   \frac{F\left(\widehat{\mathcal{Y}}_k\right)}{ \sum_{i=1}^{\widehat{N}_k}\psi\Big(\widehat{\mathcal{Y}}_k^i,\Zhat_{\widehat{T}_k^-}-\delta_{Y_{\widehat{T}_k^-}}+\sum_{l=1}^{\widehat{N}_k}\delta_{\widehat{\mathcal{Y}}_k^l},\widehat{T}_k\Big) } \right| \widehat{\mathcal{F}}_{k-1},\widehat{T}_k,\widehat{A}_k\right]\right| \widehat{\mathcal{F}}_{k-1},\widehat{T}_k \right].
\end{multline*}
We then use the distribution $\widehat{K}^*$ of $\widehat{\mathcal{Y}}_k$, defined in \eqref{def: K star hat} when the branching individual is the spinal one, conditioning on $\widehat{A}_k$, defined in Dynamics \ref{spine rates}.
\begin{multline*}
\mathbbm{1}_{\{\widehat{A}_k = (e',n_k)\}}\mathbb{E}\left[ \left. \frac{F\left(\widehat{\mathcal{Y}}_k\right)}{ \sum_{i=1}^{\widehat{N}_k}\psi\Big(\widehat{\mathcal{Y}}_k^i,\Zhat_{\widehat{T}_k^-}-\delta_{Y_{\widehat{T}_k^-}}+\sum_{l=1}^{\widehat{N}_k}\delta_{\widehat{\mathcal{Y}}_k^l},\widehat{T}_k\Big) } \right| \widehat{\mathcal{F}}_{k-1},\widehat{T}_k,\widehat{A}_k\right]=\\
\mathbbm{1}_{\{\widehat{A}_k=(e',n_k)\}}\int_{\mathcal{X}^{n_k}} \frac{F\left(\boldsymbol{y}\right)}{\widehat{\Gamma}^*_{n_k}\left(Y_{\widehat{T}_k^-},\Zhat_{\widehat{T}_k^-},\widehat{T}_k\right) }K_{n_k}\left(Y_{\widehat{T}_k^-},\Zhat_{\widehat{T}_k^-},\widehat{T}_k,\boldsymbol{y}\right)\mathcal{M}_{n_k}\left(\textrm{d}\boldsymbol{y} \right).
\end{multline*}
Finally we use the distribution of $\widehat{A}_k$ when the branching individual is the spinal one, defined in Dynamics \ref{spine rates}.
$$
\mathbb{E}\left[\mathbbm{1}_{\{\widehat{A}_k = (e',n_k)\}} \big\vert \widehat{\mathcal{F}}_{k-1}, \widehat{T}_k \right] = \frac{\widehat{\Gamma}^*_{n_k}\left(Y_{\widehat{T}_k^-},\Zhat_{\widehat{T}_k^-},\widehat{T}_k\right)}{\psi\left(Y_{\widehat{T}_k^-},\Zhat_{\widehat{T}_k^-},\widehat{T}_k \right)} \frac{B_{n_k}\left(Y_{T_k^-},\Zhat_{\widehat{T}_k^-},\widehat{T}_k\right)}{\widehat{\tau}_{tot}\left(\Zhat_{\widehat{T}_k^-},\widehat{T}_k\right)}.
$$ 
This gives \eqref{eq exhasution} on the event $\{j \neq \emptyset\}$.

Now, combining \eqref{eq C hat} and \eqref{eq exhasution}, we get
\begin{multline*}
\widehat{C}\left(e',\left(\widehat{\mathcal{V}}_i,0\leq i\leq k-1\right)\right) =  \mathbbm{1}_{\{E_{k-1} = e'\}}\mathbb{E}\Bigg[ \exp\left(\int_{\widehat{T}_{k-1}}^{\widehat{T}_{k}}\lambda\left(Y_s,\Zhat_s,s\right)\textrm{d}s\right)I\left(\widehat{T}_k - \widehat{T}_{k-1} \right)  \\
\times \frac{B_{n_k}\Big(X^{u_k}_{T_k^-},\Zhat_{\widehat{T}_k^-},\widehat{T}_k\Big)}{\widehat{\tau}_{\textrm{tot}}\left(\Zhat_{\widehat{T}_k^-},\widehat{T}_k\right)} \int_{\mathcal{X}^{n_k}} 
F\left(\boldsymbol{y}\right)K_{n_k}\left(X^{u_k}_{\widehat{T}_k^-},\Zhat_{\widehat{T}_k^-},\widehat{T}_k,\boldsymbol{y}\right)\mathcal{M}_{n_k}(\text{d}\boldsymbol{y})\Bigg| \widehat{\mathcal{F}}_{k-1} \Bigg].
\end{multline*}
Using that the time between two jump follows an inhomogeneous exponential law of instantaneous rate $\widehat{\tau}_{\textrm{tot}}$, we have
\begin{multline}\label{eq: end}
\widehat{C}\left(e',\left(\widehat{\mathcal{V}}_i,0\leq i\leq k-1\right)\right)=\mathbbm{1}_{E_{k-1} = e'}
\int_{\widehat{T}_{k-1}}^{+\infty} \exp\left(\int_{\widehat{T}_{k-1}}^{\widehat{T}_{k}}\lambda\left(Y_s,\Zhat_s,s\right)-\widehat{\tau}_{\textrm{tot}}\left(Y_s,\Zhat_s,s\right)\textrm{d}s\right) \\
\times I(t-\widehat{T}_{k-1})B_{n_k}\left(X^{u_k}_{t},\Z_{t},t\right) \int_{\mathcal{X}^{n_k}} F(\boldsymbol{y})K_{n_k}\left(X^{u_k}_{t},\Z_{t},t,\boldsymbol{y}\right)\mathcal{M}_{n_k}\left(\textrm{d}\boldsymbol{y} \right) \textrm{d}t.
\end{multline}
Finally, using in \eqref{eq: end} the fact that $\lambda$, defined in \eqref{eq lambda}, is the difference of branching rates between the branching process and the spine process, we get \eqref{eq: C C hat} and it concludes the proof.
\end{proof}

\begin{proof}[Proof of Theorem \ref{thm:pdmc}]
Let $\psi \in\mathcal{D}$, $t \geq 0$, and $\bar{z} \in \Zbarset$. 
Let $\left(\left(E_t,\Zbarhat_{t}\right), t\geq 0 \right)$ be the time-inhomogeneous $\Zbarhatset$-valued branching process of generators $(\widehat{\mathcal{L}}_{\psi}^t, t\geq 0)$ defined in \eqref{generator spinal}. Let $\widehat{T}_{\textrm{Exp}}$ denote its explosion time and $\left(\left(Y_t,\Zhat_{t}\right), t\geq 0 \right)$ its projection on $\Zhatset$. For every $t<T_{\textrm{Exp}}$, there exists $k \in \mathbb{N}$ such that $\left\{T_k \leq t < T_{k+1}\right\}$ where $T_{k+1}=+\infty$ if there is no more jumps after $T_k$. Thus we can write 
\begin{equation*}
\mathbb{E}_{\bar{z}}\left[\mathbbm{1}_{\{T_{\textrm{Exp}} > t, \mathbb{G}(t) \neq \emptyset\}}H\left(U_t,(\Zbar_s,s\leq t)\right)\right] = \sum_{k \geq 0}\mathbb{E}_{\bar{z}}\left[\mathbbm{1}_{\{T_k \leq t < T_{k+1}, \mathbb{G}(t) \neq \emptyset\}}H\left(U_t,(\Zbar_s,s\leq t)\right)\right].
\end{equation*}
For every $t\geq 0$, $k \in \mathbb{N}$, every non-extinguished sequence $a= (a_i, 1 \leq i \leq k) \in \mathfrak{U}_k(\bar{z})$  with $a_i = (u_i,n_i)$ and every $e \in \mathbb{G}_{k}\left(a\right)$, there exists a \correct{non-negative measurable} function $F_{k,a}^{t,e}$ on $\underset{i=1}{\overset{k}{\bigotimes}}(\mathbb{R}_+ \times \mathcal{U}\times \mathbb{N} \times \mathcal{X}^{n_i})$, such that  
\begin{multline*}
\mathbb{E}_{\bar{z}}\left[\mathbbm{1}_{\{T_{\textrm{Exp}} > t, \mathbb{G}(t) \neq \emptyset\}}H\left(U_t,(\Zbar_s,s\leq t)\right) \right] =\\ \sum_{k \geq 0}\sum_{a \in \mathfrak{U}_k(\bar{z})}\sum_{ e \in \mathbb{G}_k(a)} \mathbb{E}_{\bar{z}}\left[\mathbb{E}\left[\mathbbm{1}_{\{T_{k+1} >t\}} \vert \mathcal{F}_k\right]\mathbbm{1}_{\{T_k \leq t\}}F_{k,a}^{t,e}\left(\mathcal{V}_i, 0 \leq i \leq k\right)\prod_{i=0}^{k}\mathbbm{1}_{\{A_i = a_i\}} \right].
\end{multline*}
We apply Lemma \ref{lemma induction} with the non-negative measurable function $F$ defined for all sequence $\left(v_i, 0\leq i\leq k\right)\in \underset{i=1}{\overset{k}{\bigotimes}}(\mathbb{R}_+ \times \mathcal{U}\times \mathbb{N} \times \mathcal{X}^{n_i})$, where for all $0\leq i\leq k$, $v_i := (t_i,a_i,y_i)$, by
\begin{equation*}
F\left(v_i,0 \leq i\leq k\right) := \mathbb{P}\bigg( S_{k+1} >t-t_k \bigg\vert \bigcap_{0 \leq i\leq k}\left\{\mathcal{V}_i=v_i\right\} \bigg)\mathbbm{1}_{t_k \leq t}F_{k,a}^{t,e}\left(v_i,0 \leq i\leq k\right),
\end{equation*}
where $S_{k+1}$ is the random variable giving the $(k+1)$-th inter-arrival time of jumps in the original process. Note that $S_{k+1}$ follows the same probability law as $T_{k+1}-T_{k}$. We thus get, for all $e \in\mathbb{G}_k(a)$
\begin{multline}\label{eq break}
\mathbb{E}_{\bar{z}}\left[\mathbbm{1}_{\{\mathbb{G}(t) \neq \emptyset\}\cap\{T_{\textrm{Exp}} > t\}}H\left(U_t,(\Zbar_s,s\leq t)\right) \right] = \left\langle z, \psi\left(\cdot,z,0\right) \right\rangle \sum_{k \geq 0}\sum_{a \in \mathfrak{U}_k(\bar{z})}\sum_{ e \in \mathbb{G}_k(a)} \mathbb{E}_{\bar{z}}\Big[\mathbbm{1}_{\{E_{k} = e\}} \\
\times \xi_k\left(E_k,\left(\widehat{\mathcal{V}}_i, 0 \leq i \leq k\right) \right)F\left(\widehat{\mathcal{V}}_i, 0 \leq i \leq k\right)\prod_{i=0}^{k}\mathbbm{1}_{\{\widehat{A}_i = a_i\}} \Big].
\end{multline}
Then, using that $\lambda$ introduced in \eqref{eq lambda} verifies $\lambda := \widehat{\tau}_{\textrm{tot}} - \tau$,
\begin{align*}
\mathbb{P}\bigg( S_{k+1} >t-\widehat{T}_k \bigg\vert \bigcap_{0 \leq i\leq k}\left\{\widehat{\mathcal{V}}_i=v_i\right\} \bigg)\mathbbm{1}_{\{\widehat{T}_k \leq t\}} &= e^{\int_{\widehat{T}_k}^t\lambda\left(Y_s,\Zhat_s,s \right)\textrm{d}s}e^{-\int_{\widehat{T}_k}^t\widehat{\tau}_{\textrm{tot}}\left(Y_s,\Zhat_s,s \right)\textrm{d}s}\mathbbm{1}_{\{\widehat{T}_k \leq t\}}\\
&=e^{\int_{\widehat{T}_k}^t\lambda\left(Y_s,\Zhat_s,s \right)\textrm{d}s}\mathbb{E}\left[\mathbbm{1}_{\{\widehat{T}_{k+1} > t\}} \vert \widehat{\mathcal{F}}_k\right]\mathbbm{1}_{\{\widehat{T}_k \leq t\}}.
\end{align*}
We recall that on the event $\{\widehat{T}_{k+1} > t\}$, there is no jump in the interval $(\widehat{T}_k,t]$, thus, applying Lemma \ref{chain rule} on this interval we get
\begin{multline}\label{eq up to t}
\xi_k\left(E_k,\left(\widehat{\mathcal{V}}_i, 0 \leq i \leq k\right) \right) \mathbb{P}\bigg( S_{k} >t-\widehat{T}_k \bigg\vert \bigcap_{0 \leq i\leq k}\left\{\widehat{\mathcal{V}}_i=v_i\right\} \bigg)\mathbbm{1}_{\{\widehat{T}_k \leq t\}} =\\ \frac{1}{\psi\left(Y_t,\Zhat_t,t\right)}\exp\left(\int_{0}^{t}\frac{\mathcal{G}\psi\left(Y_s,\Zhat_s,s\right)}{\psi\left(Y_s,\Zhat_s,s\right)}\textrm{d}s\right)\mathbb{E}\left[\mathbbm{1}_{\{\widehat{T}_{k} \leq t < \widehat{T}_{k+1}\}} \vert \widehat{\mathcal{F}}_k\right].
\end{multline}
On the event $\{\widehat{T}_k \leq t < \widehat{T}_{k+1}\}$, $E_k=E(t)$ almost surely, then using \eqref{eq break} and \eqref{eq up to t} gives 
\begin{multline*}
\mathbb{E}_{\bar{z}}\left[\mathbbm{1}_{\{T_{\textrm{Exp}} > t, \mathbb{G}(t) \neq \emptyset\}}H\left(U_t,(\Zbar_s,s\leq t)\right) \right] = \left\langle z, \psi\left(\cdot,z\right) \right\rangle \sum_{k \geq 0}\sum_{a \in \mathfrak{U}_k(\bar{z})}\sum_{ e \in \mathbb{G}_k(a)} \mathbb{E}_{\bar{z}}\Big[\mathbbm{1}_{\{E(t) = e\}} \\
\times \frac{\mathbbm{1}_{\{\widehat{T}_k \leq t < \widehat{T}_{k+1}\}}}{\psi\left(Y_t,\Zhat_t,t\right)}\exp\left(\int_{0}^{t}\frac{\mathcal{G}\psi\left(Y_s,\Zhat_s,s\right)}{\psi\left(Y_s,\Zhat_s,s\right)}\textrm{d}s\right)F_{k,a}^{t,e}\left(\widehat{\mathcal{V}}_i, 0 \leq i \leq k\right)\prod_{i=0}^{k}\mathbbm{1}_{\{\widehat{A}_i = a_i\}} \Big].
\end{multline*}
Reconstructing the right-hand side and using the fact that the spine process does not extinct, we have
\begin{multline*}
\mathbb{E}_{\bar{z}}\left[\mathbbm{1}_{\{T_{\textrm{Exp}} > t,\mathbb{G}(t) \neq \emptyset\}}H\left(U_t,(\Zbar_s,s\leq t)\right) \right] = \\
\langle z,\psi(\cdot,z,0) \rangle \mathbb{E}_{\bar{z}}\left[\mathbbm{1}_{\{\widehat{T}_{\textrm{Exp}}>t\}} \frac{p_{E_t}\left(\Zbarhat_t\right)}{\psi\left(Y_t,\Zhat_t,t\right)}\exp\left(\int_0^t \frac{\mathcal{G}\psi\left(Y_s,\Zhat_s,s\right)}{\psi\left(Y_s,\Zhat_s,s\right)}\textrm{d}s\right) H\left(E_t,(\Zbarhat_s, s\leq t)\right)\right],
\end{multline*}
that concludes the proof.
\end{proof}

\subsection{Proofs of Section \ref{section: llogl}}\label{section: proof llogl}
In this section we derive the results on the limiting martingale and a $L\log L$ criterion.
\begin{proof}[Proof of Proposition \ref{prop:martingale}]
Let $\psi \in \mathcal{D}$ \correct{satisfying} Assumption \ref{assumption B}, we first show for all $t\geq0$, the integrability of the random variable $W_t(\psi)$ introduced in \eqref{def:martingale}. As Assumption \ref{assumption A} is satisfied, we can apply Corollary \ref{cor: pdmc} for any $t\geq 0$ to the positive function $f$ on $\mathbb{D}([0,t],\mathcal{X}\times\Zbarset)$, such that for all $u \in \mathbb{G}(t)$,
$$
f\left((X^u_s,\Zbar_s), 0 \leq s \leq t \right):= \exp\left(-\int_0^t\frac{\mathcal{G} \psi\left(X^u_s,\Z_s,s\right)}{\psi\left(X^u_s,\Z_s,s\right)}\textrm{d}s\right).
$$
Corollary \ref{cor: pdmc} applied to the function $f$, ensures that for any initial condition $\bar{z}$,
\begin{equation}\label{eq: integrability}
\mathbb{E}_{\bar{z}}\left[\sum_{u \in \mathbb{G}(t)}\exp\left(-\int_0^t\frac{\mathcal{G} \psi\left(X^u_s,\Z_s,s\right)}{\psi\left(X^u_s,\Z_s,s\right)}\textrm{d}s\right)\psi\left(X^u_t,\Z_t,t\right)\right] = \langle z,\psi(\cdot,z,0) \rangle.
\end{equation}
This last identity guarantees the integrability of $W_t(\psi)$. 

Now, for all $r,t \in \mathbb{R}_+^*$, such that $t \geq r$, we decompose the individuals alive in the population at time $t$ according to their ancestors at time $r$. see Figure \ref{schema CT} bellow. 
\begin{figure}[h!]
\begin{center}
\captionsetup{justification=centering}
\newcommand{\boundellipse}[3]% center, xdim, ydim
{(#1) ellipse (#2 and #3)
}
\begin{tikzpicture}[thick,scale=0.6, every node/.style={scale=0.9}]
\draw[->][very thick] (0,0) -- (15,0);
\draw node  at (0.5,-0.5) {$0$};
\draw[thick] (0.5,0.2) -- (0.5,-0.2);
\draw node at (6.5,-0.5) {$r$};
\draw[thick] (6.5,0.2) -- (6.5,-0.2);
\draw node at (12.5,-0.5) {$t$};
\draw[thick] (12.5,0.2) -- (12.5,-0.2);
\draw[fill=black] (0.5,2) circle (0.1);
\draw[dotted] (0.5,2) to[bend left=4.25] (12.5, 3.7);
\draw[dotted] (0.5,2) to[bend right=4.25] (12.5, 0.3);
\draw node at (0.5,2.5) {$\mathbb{G}(0)$};
\draw node at (6.47,3.7) {$v \in \mathbb{G}(r)$};
\fill[pattern=north west lines,opacity=.6,draw] \boundellipse{6.47,2}{-0.3}{1.1};
\draw node at (12.5,4.2) {$ \left\{u \in \mathbb{G}(t),\ s.t \ v \preceq u\right\}$};
\fill[pattern=north west lines,opacity=.6,draw] \boundellipse{12.5,2}{-0.5}{1.7};
\end{tikzpicture}
\end{center}
\caption{Decomposition of the population according to the ancestors at time $r$}\label{schema CT}
\end{figure}

We then have
\begin{multline*}
\mathbb{E}_{\bar{z}}\left[W_t(\psi)\big\vert \mathcal{F}_r\right] = \sum_{v \in \mathbb{G}(r)}\exp\left(-\int_0^r\frac{\mathcal{G} \psi\left(X^u_s,\Z_s,s\right)}{\psi\left(X^u_s,\Z_s,s\right)}\textrm{d}s\right)\\
\times\mathbb{E}_{\bar{z}}\left[\sum_{u \in \mathbb{G}(t), \ s.t \ v \preceq u}\exp\left(-\int_r^t\frac{\mathcal{G} \psi\left(X^u_s,\Z_s,s\right)}{\psi\left(X^u_s,\Z_s,s\right)}\textrm{d}s\right)\psi\left(X^u_t,\Z_t,t\right)\Bigg\vert \mathcal{F}_r\right].
\end{multline*}
Now, in order to establish that the conditional expectation is equal to $\psi\left(X^v_r,\Z_r\right)$  for all $v \in \mathbb{G}(r)$, we apply Corollary \ref{cor: pdmc} to the positive functions $f_v$ on $\mathcal{U}\times\mathbb{D}([0,t],\Zbarset)$, such that
$$
f_v\left(u,(\Zbar_s, 0\leq s \leq t) \right):= \mathbbm{1}_{v \preceq u}\exp\left(-\int_r^t\frac{\mathcal{G}\psi\left(X^u_s,\Z_s,s\right)}{\psi\left(X^u_s,\Z_s,s\right)}\textrm{d}s\right)
$$
and use the Markov property. We get that for all $v \in \mathbb{G}(r)$ 
\begin{multline*}
\mathbb{E}\left[\sum_{\substack{u \in \mathbb{G}(t), \\ \textit{s.t.} \ v \preceq u}}\exp\left(-\int_r^t\frac{\mathcal{G} \psi\left(X^u_s,\Z_s,s\right)}{\psi\left(X^u_s,\Z_s,s\right)}\textrm{d}s\right)\psi\left(X^u_t,\Z_t,t\right)\Bigg\vert \mathcal{F}_r\right] =\\ \langle \Z_r,\psi(\cdot,\Z_r,r)\rangle\mathbb{E}_{\Z_r}\left[\mathbbm{1}_{\{v \preceq E_t\}}\right].
\end{multline*}
Finally using the new spinal individual distribution \eqref{spine proba}, we get
$$
\mathbb{E}\left[W_t(\psi)\big\vert \mathcal{F}_r\right] = \sum_{v \in \mathbb{G}(r)}\exp\left(-\int_0^r\frac{\mathcal{G}\psi\left(X^u_s,\Z_s,s\right)}{\psi\left(X^u_s,\Z_s,s\right)}\textrm{d}s\right) \psi\left(X^v_r,\Z_r,r\right).
$$
That concludes the proof.
\end{proof}

To establish Theorem \ref{thm LlogL}, we need the following lemma from measure theory. 

\begin{lemma}\label{lemma:durrett}
Let $\left(\Omega,\mathcal{F}, \mu \right)$ be a probability space and let $\widehat{\mu}$ be a finite non negative measure on $\Omega$. Let $\left(\mathcal{F}_t, 0 \leq t \right)$ be increasing $\sigma$-fields such that $\sigma\left(\underset{0 \leq t}{\bigcup}\mathcal{F}_t\right) = \mathcal{F}$, and $\widehat{\mu}_t$, $\mu_t$ be the restrictions of  $\widehat{\mu}$ and $\mu$ to $\mathcal{F}_t$. Suppose that there exists a non-negative $\mathcal{F}_t$-martingale $\left(W_t, 0 \leq t\right)$ such that for all $t \geq 0$
\begin{equation*}
\frac{\textrm{d}\widehat{\mu}_t}{\textrm{d}\mu_t} = W_t.
\end{equation*}
Then, denoting $W := \limsup_{t \to +\infty}W_t$, we have the following dichotomy:
\begin{enumerate}
\item $\int W \textrm{d}\mu = \int W_0 \textrm{d}\mu \textrm{ if and only if } \  W < +\infty \ \ \widehat{\mu}\textrm{-a.s.}$
\item $W=0 \ \ \mu\textrm{-a.s. if and only if } \ W = +\infty \ \ \widehat{\mu}\textrm{-a.s.}$
\end{enumerate}
\end{lemma}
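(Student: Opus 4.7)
The plan is to combine Doob's martingale convergence theorem under $\mu$ with the Lebesgue decomposition of $\widehat{\mu}$ with respect to $\mu$, realised through a common dominating measure — this is the classical route to such dichotomy results.

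First I would verify that $(W_t)$ is a non-negative $(\mathcal{F}_t)$-martingale under $\mu$: for $s \leq t$ and $A \in \mathcal{F}_s$, the consistency $\widehat{\mu}_t(A) = \widehat{\mu}(A) = \widehat{\mu}_s(A)$ gives
\[
\int_A W_t\, d\mu = \widehat{\mu}_t(A) = \widehat{\mu}_s(A) = \int_A W_s\, d\mu,
\]
so $\mathbb{E}_\mu[W_t \mid \mathcal{F}_s] = W_s$. Doob's theorem then yields $W_t \to W$ $\mu$-a.s.\ with $W \in [0, +\infty)$ $\mu$-a.s., and Fatou gives $\int W\, d\mu \leq \int W_0\, d\mu = \widehat{\mu}(\Omega)$.

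Next, introduce the dominating probability measure $\pi := (\mu + \widehat{\mu})/(1 + \widehat{\mu}(\Omega))$. Both $\mu$ and $\widehat{\mu}$ are absolutely continuous with respect to $\pi$; write $X_t := d\mu_t/d\pi_t$ and $Y_t := d\widehat{\mu}_t/d\pi_t$. These are bounded $(\mathcal{F}_t)$-martingales under $\pi$, converging $\pi$-a.s.\ and in $L^1(\pi)$ to limits $X_\infty, Y_\infty$; by the hypothesis $\sigma(\cup_t \mathcal{F}_t) = \mathcal{F}$ and a monotone-class argument, $X_\infty = d\mu/d\pi$ and $Y_\infty = d\widehat{\mu}/d\pi$ on $\mathcal{F}$. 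Since $W_t = Y_t/X_t$ on $\{X_t > 0\}$, one obtains $W = Y_\infty/X_\infty$ $\pi$-a.s.\ on $\{X_\infty > 0\}$, while $W = +\infty$ on $\{X_\infty = 0,\, Y_\infty > 0\}$. The Lebesgue decomposition of $\widehat{\mu}$ relative to $\mu$ then reads, for $A \in \mathcal{F}$,
\[
\widehat{\mu}_{ac}(A) = \int_A W\, d\mu, \qquad \widehat{\mu}_s(A) = \widehat{\mu}(A \cap \{X_\infty = 0\}),
\]
and $\widehat{\mu}_s$ is concentrated (up to a $\widehat{\mu}$-null set) on $\{W = +\infty\}$ since $\widehat{\mu}(\{X_\infty = 0 = Y_\infty\}) = \int \mathbbm{1}_{\{X_\infty = 0 = Y_\infty\}}\, Y_\infty\, d\pi = 0$.

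The two equivalences then follow. For (i): $\int W\, d\mu = \widehat{\mu}_{ac}(\Omega)$, hence $\int W\, d\mu = \int W_0\, d\mu = \widehat{\mu}(\Omega)$ iff $\widehat{\mu}_s = 0$ iff $\widehat{\mu}(\{W = +\infty\}) = 0$, that is, $W < +\infty$ $\widehat{\mu}$-a.s. For (ii): $W = 0$ $\mu$-a.s.\ iff $d\widehat{\mu}_{ac}/d\mu = 0$ $\mu$-a.s.\ iff $\widehat{\mu}_{ac} = 0$ iff $\widehat{\mu} = \widehat{\mu}_s$ is concentrated on $\{W = +\infty\}$, i.e.\ $W = +\infty$ $\widehat{\mu}$-a.s. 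The main subtlety is pinning down that the singular part of $\widehat{\mu}$ is supported exactly on $\{W = +\infty\}$: one must control the \emph{a priori} delicate set $\{X_\infty = 0 = Y_\infty\}$ on which $W$ could fail to be infinite, and this is dispatched by the integral identity displayed above.
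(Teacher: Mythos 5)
The paper does not prove this lemma; it simply refers to Athreya \cite{Athreya00} for the discrete-time case and to Durrett's Appendix A \cite{Durrett} for the extension to continuous time via Kolmogorov's extension theorem. Your argument is a complete, self-contained proof of the stated dichotomy via the classical route: realize the Lebesgue decomposition of $\widehat{\mu}$ relative to $\mu$ through the common dominating probability measure $\pi = (\mu + \widehat{\mu})/(1+\widehat{\mu}(\Omega))$, use martingale convergence and uniform integrability to identify $X_\infty = d\mu/d\pi$ and $Y_\infty = d\widehat{\mu}/d\pi$ on $\mathcal F = \sigma(\cup_t \mathcal F_t)$, and read off $W$ as $Y_\infty/X_\infty$ on $\{X_\infty > 0\}$ and $+\infty$ on $\{X_\infty = 0\}$ up to a $\widehat{\mu}$-null set. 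This is essentially the argument underlying the cited references, written out rather than deferred, and it is correct. The one genuinely delicate step is the one you flag at the end: the $\limsup$ of $W_t$ is a priori undetermined on $\{X_\infty = 0 = Y_\infty\}$, and you correctly dispose of it by noting $\widehat{\mu}(\{X_\infty=0=Y_\infty\}) = \int \mathbbm{1}_{\{X_\infty=0=Y_\infty\}} Y_\infty\, d\pi = 0$; it is worth also observing that this set is $\mu$-null (being contained in $\{X_\infty=0\}$), hence $\pi$-null, so that the identification of $W$ holds $\pi$-a.s.\ and not merely under each of $\mu$ and $\widehat{\mu}$ separately. A minor point worth making explicit: the hypothesis $d\widehat{\mu}_t/d\mu_t = W_t$ forces $\widehat{\mu}(\{X_t=0\})=0$ for each $t$, which is what guarantees $W_t = Y_t/X_t$ is well-defined $\pi$-a.e.\ along the whole path; and one should work with right-continuous versions of the martingales $(X_t)$, $(Y_t)$ so that statements over uncountable $t$ reduce to rational $t$.
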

\begin{proof}
We refer to \correct{\cite{Athreya00,LPP95}} for the proof of this result in discrete time. The extension to continuous time changes of measure uses Kolmogorov's extension theorem, see  Durrett Appendix A \cite{Durrett} for further details.
\end{proof}
We now state the following lemma, that is the dual proposition of Theorem \ref{thm LlogL}.
\begin{lemma}\label{lemma: convergence W hat} Let $\psi \in\mathcal{D}$ satisfying Assumption \ref{assumption H}, let $(\widehat{W}_t(\psi), t\leq \widehat{T}_{\textrm{Exp}})$ be the $\widehat{\mathcal{F}}_t$-adapted process such that, for all $t\leq \widehat{T}_{\textrm{Exp}}$\correct{,}
\begin{equation}\label{def: martingale spine}
\widehat{W}_t(\psi) := \sum_{u \in \widehat{\mathbb{G}}(t)}\psi\left(X^u_t,\Zhat_t,t\right)\exp\left(-\int_0^t\frac{\mathcal{G}\psi\left(X^u_s,\Zhat_s,s\right)}{\psi\left(X^u_s,\Zhat_s,s\right)}\textrm{d}s\right),
\end{equation}
where $\widehat{\mathbb{G}}(t)$ is the set of labels of individuals living in the spinal population at time $t$.

Under Assumption \ref{assumption H_technical}, \eqref{crit sup finite} implies that, $\widehat{T}_{\textrm{Exp}} = \infty$  and
$\limsup_{t \to +\infty}\widehat{W}_t(\psi) < +\infty$ almost surely.

Under Assumption \ref{assumption H_inf_technical}, \eqref{crit inf infinite} implies that $\limsup_{t \to +\infty}\widehat{W}_t(\psi) = +\infty$ almost surely.
\end{lemma}
\correct{This lemma is the innovative part of this proof. It involves the decomposition of the spinal process as a process with immigration where the spinal individual provides new individuals at a $\psi$-biased rate, and the new spine is the new source of immigration. However as the function $\psi$ is not constant, the spinal construction also changes the dynamics of individuals outside the spine, that do not behave as those in the original process $(\Z_t, t\geq 0)$. Assumption \ref{assumption H_technical} is used to control the behavior of the individuals outside the spine to prove the non degeneracy of the limiting martingale. For the second part of this theorem, we use Assumption \ref{assumption H_inf_technical} to establish that the contribution of the offspring without the new spine is enough to ensure the degeneracy.}

\begin{proof}[Proof of Theorem \ref{thm LlogL}]
Theorem \ref{thm:pdmc} applied with any function $\psi \in\mathcal{D}$ satisfying Assumption \ref{assumption H} exhibits the martingale $(W_t(\psi), t\geq 0)$ as a Radon-Nikodym derivative. We can thus apply Lemma \ref{lemma:durrett} to the spinal change of measure. 
The martingale $W_t(\psi)$ and its limit $W(\psi)$, introduced in \eqref{def:martingale}, verify the following dichotomy:
\begin{enumerate}
\item $\mathbb{E}_{\bar{z}}\left[W(\psi)\right] = \mathbb{E}_{\bar{z}}\left[W_0(\psi)\right]$ 
if and only if $
\limsup_{t \to +\infty}\widehat{W}_t(\psi) < +\infty \ \textrm{ a.s.}$
\item $W(\psi) = 0  \ \textrm{ a.s.}$ if and only if $\limsup_{t \to +\infty}\widehat{W}_t(\psi) = +\infty \ \textrm{ a.s.}$
\end{enumerate}
In this case, we use \eqref{eq: integrability} to obtain that
$$
\mathbb{E}_{\bar{z}}\left[W_0(\psi)\right] =\langle z,\psi(\cdot,z,0) \rangle,
$$
and a direct application of Lemma \ref{lemma: convergence W hat} concludes the proof.
\end{proof}
\begin{proof}[Proof of Lemma \ref{lemma: convergence W hat}]
Let $\psi\in\mathcal{D}$ satisfying Assumption \ref{assumption H}, and introduce  for all $k \in \mathbb{N}$, 
$$
\overline{\widehat{p}}^*_n:=\sup_{(x,\Z,t) \in \Zhatset\times\mathbb{R}_+}\widehat{p}^*_n(x,\Z,t), \quad \ \underline{\widehat{p}}^*_n:=\inf_{(x,\Z,t) \in \Zhatset\times\mathbb{R}_+}\widehat{p}^*_n(x,\Z,t)
$$
$$
\overline{\widehat{K}}^*_n(\cdot):=\sup_{(x,\Z,t) \in \Zhatset\times\mathbb{R}_+}\widehat{K}^*_n(x,\Z,t,\cdot), \quad \text{and} \quad \underline{\widehat{K}}^*_n(\cdot):=\inf_{(x,\Z,t) \in \Zhatset\times\mathbb{R}_+}\widehat{K}^*_n((x,\Z,t,\cdot),
$$
where $(\widehat{p}^*_n, n\geq 0)$ and $\widehat{K}^*_n$ are respectively the law of the number of children and the measure giving the offspring traits for the $\psi$-spine, defined respectively in \eqref{def: p star hat} and \eqref{def: K star hat}.

We first state two technical lemmas, proven in Appendix \ref{appendix immigration}.
\begin{lemma}\label{lemma: H finite}
For all $\psi\in \mathcal{D}$ satisfying Assumption \ref{assumption H}, the criteria
\begin{equation*}
\sum_{n\geq 1} \overline{\widehat{p}}^*_n \int_{\mathcal{X}^n}\sup_{(x,\Z,t) \in \Zhatset\times\mathbb{R}_+}\left[\log\left(\sum_{i=1}^n\psi(y^i,\Z^+(x,\boldsymbol{y}),t) \right)\right] \overline{\widehat{K}}^*_n(\boldsymbol{y})\mathcal{M}_n(\text{d}\boldsymbol{y})<+\infty
\end{equation*}
implies that
\begin{equation*}
\limsup\limits_{t \to +\infty}\psi\left(Y_t,\Zhat_t,t \right)\exp\left(-\int_0^t\frac{\mathcal{G}\psi\left(Y_s,\Zhat_s,s\right)}{\psi\left(Y_s,\Zhat_s,s\right)}\text{d}s\right) = 0 \quad \text{a.s}.
\end{equation*}
\end{lemma}
We recall the notation $\Z^+(x,\boldsymbol{y}) := \Z + \sum_{i=1}^n\delta_{y^i} - \delta_{x}$, introduced in \eqref{nu+}.
\begin{lemma}\label{lemma: H infty}
For all $\psi\in \mathcal{D}$ satisfying Assumptions \ref{assumption H_inf_technical} and \ref{assumption H}, the criteria 
\begin{equation*}
\sum_{n\geq 1} \underline{\widehat{p}}^*_n \int_{\mathcal{X}^n}\inf_{(x,\Z,t) \in \Zhatset\times\mathbb{R}_+}\left[\log\left(\sum_{i=1}^n\psi(y^i,\Z^+(x,\boldsymbol{y}),t) \right)\right]\underline{\widehat{K}}^*_n(\boldsymbol{y})\mathcal{M}_n(\text{d}\boldsymbol{y}) = +\infty
\end{equation*}
implies that for any sequence of integers $(j_n, n\geq 2)$ such that $1\leq j_n\leq n$,  
\begin{equation*}
\sum_{n\geq 2} \underline{\widehat{p}}^*_n \int_{\mathcal{X}^n}\inf_{(x,\Z,t) \in \Zhatset\times\mathbb{R}_+}\left[\log\left(\sum_{\substack{i=1\\ i\neq j_n}}^n\psi(y^i,\Z^+(x,\boldsymbol{y}),t) \right)\right]\underline{\widehat{K}}^*_n(\boldsymbol{y})\mathcal{M}_n(\text{d}\boldsymbol{y}) = +\infty.
\end{equation*}
\end{lemma}
To establish Lemma \ref{lemma: convergence W hat}, we follow the conceptual decomposition of the spine process, first introduced by Lyons, Pemantle and Peres \cite{LPP95}, \correct{that is, viewing the spine as an immigration source into a process without a spine}. We consider the $\psi$-spinal process $(\mathring{\Zbarhat}_t, t\geq 0)$ describing all the individuals outside the spine and its associated marginal process $\mathring{\Zhat}_t$, defined for all $t\geq0$ by:
\begin{equation*}
\mathring{\Zbarhat}_t := \Zbarhat_t - \delta_{(E_t,Y_t)}, \quad \textrm{and} \quad \mathring{\Zhat}_t := \Zhat_t - \delta_{Y_t}.
\end{equation*}
We denote by $\mathring{\mathbb{G}}(t)$ the random set of labels of non-spinal individuals living at time $t$
\begin{equation*}
\mathring{\mathbb{G}}(t) = \left\{u \in\mathcal{U}\backslash \{E_t \} : \ \int_{\mathcal{U}\times \mathcal{X}}\mathbbm{1}_{\{v=u\}}\Zbarhat_t(\textrm{d}v,\textrm{d}x) >0 \right\} .
\end{equation*}
We also introduce for all $t\geq 0$, 
\begin{equation*}
\mathring{W}_t(\psi) := \sum_{u \in \mathring{\mathbb{G}}(t)}\psi\left(X^u_t,\Zhat_t,t\right)\exp\left(-\int_0^t\Lambda\left(X^u_s,\Zhat_s,s\right)\textrm{d}s\right),
\end{equation*}
where for all $x,\Z,t \in \Zhatset\times\mathbb{R}_+$, $$\Lambda(x,\Z,t):=\frac{\mathcal{G}\psi(x,\Z,t)}{\psi(x,\Z,t)}.$$
\newline
$\bullet$ We handle first the \textbf{degenerated case}, and suppose that Assumption \ref{assumption H_inf_technical} holds and that
\begin{equation}\label{eq H+}
\sum_{n\geq 1} \underline{\widehat{p}}^*_n \int_{\mathcal{X}^n}\inf_{(x,\Z,t) \in \Zhatset\times\mathbb{R}_+}\left[\log\left(\sum_{i=1}^n\psi(y^i,\Z^+(x,\boldsymbol{y}),t) \right)\right]\underline{\widehat{K}}^*_n(\boldsymbol{y})\mathcal{M}_n(\text{d}\boldsymbol{y}) = +\infty.
\end{equation}
We notice that, almost surely for all $t \geq 0$, the martingale $\widehat{W}_t(\psi)$ introduced in \eqref{def: martingale spine} verifies
\begin{equation}\label{eq W inf}
\mathring{W}_t(\psi) \leq \widehat{W}_t(\psi).
\end{equation}

We denote $(\widehat{T}^*_k, k\geq0)$ the sequence of random jumps times of the spine, and $(\widehat{N}^*_k, k\geq 0)$ the sequence of random variables giving the number of children at each branching event of the spine. For every $k \geq 0,\  (X^i_{k}, 1 \leq i \leq \widehat{N}^*_k)$ is the random vector giving the types of the children of the spine, among them the trait of the new spine is denoted $Y_{\widehat{T}^*_k}$. %We also introduce the filtration $\mathcal{F}$ associated with these sequences, such that for all $n\geq 0$ 
%$$
%\mathcal{F}_{n} := \sigma\left\{\left(\widehat{T}^*_{i+1},\widehat{N}^*_{i}, (X^j_{\widehat{T}^*_{i}},1 \leq j \leq \widehat{N}^*_{i}),Y_{\widehat{T}^*_i}\right),0\leq i\leq n\right\} 
%$$
We also introduce the sequence of filtrations $\left(\mathcal{F}_k,k\geq 1\right)$ such that for all $k\geq 0$:
\begin{equation}\label{lemma: filtration}
\mathcal{F}_{k} := \widehat{\mathcal{F}}_{\widehat{T}^*_{k+1}\minus}.
\end{equation} 
where $\widehat{\mathcal{F}}_t$ is the canonical filtration of the spinal process up to time $t$.
Thus, $\mathcal{F}_k$ corresponds to the information on the process until the time of the $(k+1)$-th jump.
We notice that
\begin{align*}
\mathring{\mathcal{W}}_{\widehat{T}^*_k} = \mathring{\mathcal{W}}_{\widehat{T}^*_k-} &+ \sum_{i=1}^{\widehat{N}^*_k}\psi\left(\widehat{X}^{i}_k,\Zhat_{\widehat{T}^*_k},{\widehat{T}^*_k}\right)\exp\left(-\int_0^{\widehat{T}^*_k}\Lambda\left(X^i_s,\Zhat_s,s\right)\textrm{d}s \right)\\
& - \psi\left(Y_{\widehat{T}^*_k},\Zhat_{\widehat{T}^*_k},{\widehat{T}^*_k}\right)\exp\left(-\int_0^{\widehat{T}^*_k}\Lambda\left(Y_s,\Zhat_s,s\right)\textrm{d}s \right).
\end{align*}
Using that for all $t\geq 0$, $\mathring{\mathcal{W}}_{t}$ is almost surely non-negative, and using the upper bound of $\Lambda$ in Assumption \ref{assumption H}, we obtain
\begin{align}\label{eq min sup}
\mathring{\mathcal{W}}_{\widehat{T}^*_k} &\geq  \left(\sum_{i=1}^{\widehat{N}^*_k}\psi\left(\widehat{X}^{i}_k,\Zhat_{\widehat{T}^*_k},{\widehat{T}^*_k}\right) - \psi\left(Y_{\widehat{T}^*_k},\Zhat_{\widehat{T}^*_k},{\widehat{T}^*_k}\right)\right)\exp\left(-C\widehat{T}^*_k\right) \nonumber \\
&\geq  \left(\sum_{i=1}^{\widehat{N}^*_k}\psi\left(\widehat{X}^{i}_k,\Zhat_{\widehat{T}^*_k},{\widehat{T}^*_k}\right) - \max_{1\leq j \leq n}\psi\left(X^j_{k},\Zhat_{\widehat{T}^*_k},{\widehat{T}^*_k}\right)\right)\exp\left(-C\widehat{T}^*_k\right).
\end{align}

Let us introduce for all $K>0$ and all $k\geq 1$, the event
$$
B^K_k := \left\{\log\left(\sum_{i=1}^{\widehat{N}^*_k}\psi\left(\widehat{X}^{i}_k,\Zhat_{\widehat{T}^*_k},{\widehat{T}^*_k}\right) - \max_{1\leq j \leq k}\psi\left(X^j_{k},\Zhat_{\widehat{T}^*_k},{\widehat{T}^*_k}\right)\right)\geq kK \right\} \in \mathcal{F}_{n}.
$$
We will now establish that 
\begin{equation*}
\sum_{k \geq 1}\mathbb{P}\left(B^K_k \Big\vert \mathcal{F}_{k-1}\right) = +\infty.
\end{equation*}
First notice that 
\begin{multline*}
\sum_{k \geq 1}\mathbb{P}\left(B^K_k \Big\vert \mathcal{F}_{k-1}\right) = 
\sum_{k \geq 1}\sum_{n\geq 1} \widehat{p}^*_n\left(Y_{\widehat{T}^*_k-},\Zhat_{\widehat{T}^*_k-},{\widehat{T}^*_k}\right)\int_{\mathcal{X}^n}\widehat{K}^*_n\left(Y_{\widehat{T}^*_k-},\Zhat_{\widehat{T}^*_k-},\widehat{T}^*_k,\boldsymbol{y}\right)\\
\times\mathbbm{1}_{\left\{\log\left(\sum_{i=1}^{n}\psi\left(y^i,\Zhat_{\widehat{T}^*_k},{\widehat{T}^*_k}\right) - \max_{1\leq j \leq n}\psi\left(y^j,\Zhat_{\widehat{T}^*_k},{\widehat{T}^*_k}\right)\right) \geq kK\right\}}\mathcal{M}_n(\text{d}\boldsymbol{y}).
\end{multline*}
Notice that the indicator function is always zero for $n=1$. Taking the infimum over $\Zhatset\times\mathbb{R}_+$, we get that
\begin{multline*}
\sum_{n \geq 1}\mathbb{P}\left(B^K_k \Big\vert \mathcal{F}_{k-1}\right) 
\geq \sum_{n\geq 2} \underline{\widehat{p}}^*_n\int_{\mathcal{X}^n}\underline{\widehat{K}}^*_n(\boldsymbol{y})\\ \times\sum_{k\geq 1}\mathbbm{1}_{\left\{\underset{(x,\Z,t) \in \Zhatset\times\mathbb{R}_+}{\inf}\left(\log\left(\sum_{i=1}^{n}\psi\left(y^i,\Z^+(x,\boldsymbol{y}),t\right) - \max_{1\leq j \leq n}\psi\left(y^j,\Z^+(x,\boldsymbol{y}),t\right)\right)\right) \geq kK\right\}}\mathcal{M}_n(\text{d}\boldsymbol{y}).
\end{multline*}
Finally, using that $\sum_{n\geq 2} \underline{\widehat{p}}^*_n \int_{\mathcal{X}^n}\underline{\widehat{K}}^*_n(\boldsymbol{y})\mathcal{M}_n(\text{d}\boldsymbol{y})\leq 1$ and that, for all $A\in\mathbb{R}$, $$\sum_{k\geq0}\mathbbm{1}_{\{A\geq kK\}} \geq -1+\frac{A}{K},$$ we get
\begin{multline*}
\sum_{k \geq 1}\mathbb{P}\left(B^K_k \Big\vert \mathcal{F}_{k-1}\right) 
\geq -1 + \frac{1}{K}\sum_{n\geq 2} \underline{\widehat{p}}^*_n\int_{\mathcal{X}^n}\underline{\widehat{K}}^*_n(\boldsymbol{y})\\
\times\underset{(x,\Z,t) \in \Zhatset\times\mathbb{R}_+}{\inf}\left(\log\left(\sum_{i=1}^{n}\psi\left(y^i,\Z^+(x,\boldsymbol{y}),t\right) - \max_{1\leq j \leq n}\psi\left(y^j,\Z^+(x,\boldsymbol{y}),t\right)\right)\right)\mathcal{M}_n(\text{d}\boldsymbol{y}).
\end{multline*}
Lemma \ref{lemma: H infty} ensures that the lower bound is infinite and therefore 
\begin{equation*}
\sum_{k \geq 1}\mathbb{P}\left(B^K_k \Big\vert \mathcal{F}_{k-1}\right) = +\infty.
\end{equation*}
We can thus apply the conditional second Borel-Cantelli lemma, see Theorem 4.3.4 in \cite{Durrett}. For all $K > 0$, 
\begin{equation*}
\limsup_{k \rightarrow \infty}\left(\sum_{i=1}^{\widehat{N}^*_k}\psi\left(\widehat{X}^{i}_k,\Zhat_{\widehat{T}^*_k},{\widehat{T}^*_k}\right) - \psi\left(Y_{\widehat{T}^*_k},\Zhat_{\widehat{T}^*_k},{\widehat{T}^*_k}\right)\right)e^{-Kk} = +\infty \quad \textrm{a.s.}
\end{equation*}
Furthermore, the bounds on the branching rate in Assumption \ref{assumption H} ensure that $\widehat{T}^*_k$ grows linearly almost surely to infinity as $k$ tends to infinity. Thus, using \eqref{eq min sup} we get that $\limsup_{t\rightarrow\infty}\mathring{W}_t(\psi) = +\infty$ and relation \eqref{eq W inf} concludes the proof.
\newline
$\bullet$ Now we treat the \textbf{non-degenerated} case, and suppose that Assumption \ref{assumption H_technical} holds and that
\begin{equation}\label{crit technic}
\sum_{n\geq 1}\overline{\widehat{p}}^*_n<+\infty, \qquad \sup_{n\geq 1}\int_{\mathcal{X}^n}\overline{\widehat{K}}^*_n(\boldsymbol{y})\mathcal{M}_n(\text{d}\boldsymbol{y})<+\infty
\end{equation}
and
\begin{equation}\label{eq H-}
\sum_{n\geq 1} \overline{\widehat{p}}^*_n \int_{\mathcal{X}^n}\sup_{(x,\Z,t) \in \Zhatset\times\mathbb{R}_+}\left[\log\left(\sum_{i=1}^n\psi(y^i,\Z^+(x,\boldsymbol{y}),t) \right)\right] \overline{\widehat{K}}^*_n(\boldsymbol{y})\mathcal{M}_n(\text{d}\boldsymbol{y})<+\infty.
\end{equation}
We notice that
\begin{equation}\label{eq inf}
\mathring{W}_t(\psi) = \widehat{W}_t(\psi) - \psi\left(Y_t,\Zhat_t,t \right)e^{-\int_0^t\Lambda\left(Y_s,\Zhat_s,s\right)\text{d}s}.
\end{equation}
In the following we prove that $\limsup\limits_{t \to +\infty} \mathring{W}_t(\psi) <+\infty$ almost surely, and use Lemma \ref{lemma: H finite} to conclude the proof.

Let $\widehat{E}= \mathcal{U} \times \mathbb{R}_+ \times \mathbb{N} \times \mathcal{X}^{\mathbb{N}}$ and $\widehat{E}^*= \mathbb{R}_+ \times \mathbb{N} \times \mathcal{X}^{\mathbb{N}}$. We introduce $\widehat{Q}\left(\textrm{d}s,\textrm{d}u,\textrm{d}r,\textrm{d}n,\textrm{d}\boldsymbol{y} \right)$ and $\widehat{Q}^*\left(\textrm{d}s,\textrm{d}r,\textrm{d}n,\textrm{d}\boldsymbol{y} \right)$ two independent Poisson point measures on $\mathbb{R}_+ \times\widehat{E}$ and $\mathbb{R}_+ \times\widehat{E}^*$ with respective intensity $\textrm{d}s(\sum_{v \in\mathcal{U}}\delta_v(\textrm{d}u))\textrm{d}r\sum_{i\geq1}\delta_{i}(\text{d}n)\mathcal{M}_i(\text{d}\boldsymbol{y})$ outside the spine and $\textrm{d}s\textrm{d}r\sum_{i\geq1}\delta_{i}(\text{d}n)\mathcal{M}_i(\text{d}\boldsymbol{y})$ for the spine. We denote by $\big(\widehat{\mathcal{F}}^*_t, t\geq 0\big)$ the canonical filtration associated with $\widehat{Q}^*$. Recall that the set of labels of the living individuals in the population outside the spine is denoted by $\mathring{\mathbb{G}}(t)$ and the random index of the new spine, introduced in \eqref{spine proba}, is denoted by $J(x,\Z,t,\boldsymbol{y})$.

We localize the process $(\mathring{W}_t(\psi), t\geq 0)$ to avoid explosion and use Itô's formula to explicit $\mathring{W}_t(\psi)$. Let $\widehat{T}^m$ be the $m$-th branching event of the spinal process for $m\geq 1$.
 
For all $T\leq\widehat{T}^m$, 
\begin{align}\label{eq: ito w rond}
\mathring{W}_T(\psi)&= \mathring{\mathcal{W}}_0 - \int_0^TI_t\text{d}t + \int_0^T\int_{\widehat{E}}\widehat{I}_t\widehat{Q}\left(\text{d}t,\text{d}u,\text{d}r,\text{d}n,\text{d}\boldsymbol{y}\right) \nonumber\\
&+\int_0^T\int_{\widehat{E}^*}\left\{\sum_{i=1}^n\mathbbm{1}_{\left\{i \neq J\left(Y_{t\minus},\chi_{t\minus},t,\boldsymbol{y} \right) \right\}}\psi\left(y^i,\Zhat_{t}^+\left(Y_t,\boldsymbol{y}\right),t\right)\exp\left(-\int_0^t\Lambda\left(Y_s,\chi_s,s\right)\text{d}s\right) \right.\nonumber\\
& \hspace{1.1cm}  + \sum_{v\in\mathring{\mathbb{G}}_{t\minus}} \left[\psi\left(X^v_t,\Zhat_{t}^+\left(Y_t, \boldsymbol{y}\right),t\right)-\psi\left(X^v_t,\chi_{t\minus},t\right)\right]\exp\left(-\int_0^t\Lambda\left(X^v_s,\chi_s,s\right)\text{d}s\right) \Bigg\}\nonumber\\
&  \hspace{3.8cm} \times \mathbbm{1}_{\left\{r\leq \widehat{B}^*_n\left(Y_{t\minus},\chi_{t\minus},t\right)\widehat{K}^*_n\left(Y_{t\minus},\chi_{t\minus},t,\boldsymbol{y} \right)\right\}}  \widehat{Q}^*\left(\text{d}t,\text{d}r,\text{d}n,\text{d}\boldsymbol{y}  \right)
\end{align}
where 
$$
I_t := \sum_{u\in\mathring{\mathbb{G}}_t}\left[\mathcal{G}\psi-G\psi\right]\left(X^u_t,\Zhat_t,t\right)e^{-\int_0^t\Lambda\left(X^u_s,\Zhat_s,s\right)\text{d}s},
$$ 
and  
\begin{multline}\label{def I_t hat}
\widehat{I}_t := \Bigg\{\left[\sum_{i=1}^n\psi\left(y^i,\Zhat_{t}^+\left(X^u_t, \boldsymbol{y}\right),t\right)-\psi\left(X^u_t,\Zhat_{t}^+\left(X^u_t, \boldsymbol{y}\right),t\right)\right]\exp\left(-\int_0^t\Lambda\left(X^u_s,\chi_s,s\right)\text{d}s\right) \\
+ \sum_{v\in\mathring{\mathbb{G}}_{t\minus}} \left[\psi\left(X^v_t,\Zhat_{t}^+\left(X^u_t, \boldsymbol{y}\right),t\right)-\psi\left(X^v_t,\Zhat_{t\minus},t\right)\right]\exp\left(-\int_0^t\Lambda\left(X^v_s,\chi_s,s\right)\text{d}s\right) \Bigg\}\\
\times \mathbbm{1}_{\left\{u\in\mathring{\mathbb{G}}_t\right\}}\mathbbm{1}_{\left\{r\leq \widehat{B}_n\left(Y_t,X^u_t,\Zhat_{t\minus},t\right)\widehat{K}_n\left(Y_t,X^u_t,\Zhat_{t\minus},t,\boldsymbol{y}\right)\right\}}.
\end{multline}

$I_t$ describes the deterministic evolution of types between the time of jump, $\widehat{I}_t$ corresponds to the jumps outside the spine, and the last integral in \eqref{eq: ito w rond} is the contribution from the spinal individual.

Using the expression of the operator $\mathcal{G}$ and Assumption \ref{assumption H_technical} we have for all $t>0$, all $u\in\mathring{\mathbb{G}}_t$
\begin{multline*}
I_t = \sum_{u\in\mathring{\mathbb{G}}_t}\exp\left(-\int_0^t\Lambda\left(X^u_s,\chi_s,s\right)\text{d}s\right)\sum_{n\geq 0} B_n\left(X^u_t,\Zhat_t,t\right)\\
\times\int_{\mathcal{X}^n}\left[\sum_{i=1}^n\psi\left(y^i,\Zhat_{t}^+\left(X^u_t, \boldsymbol{y}\right),t\right)-\psi\left(X^u_t,\Zhat_{t}^+\left(X^u_t, \boldsymbol{y}\right),t\right)\right]K_n\left(X^u_t,\Zhat_t,t,\boldsymbol{y}\right) \mathcal{M}_n(\text{d}\boldsymbol{y}).
\end{multline*}
Notice that Assumption \ref{assumption H_technical} implies that, for all $(x,(x_e,\Z),t) \in \mathcal{X}\times\Zhatset\times\mathbb{R}_+$, 
$$
\widehat{B}_n(x_e,x,\Z,t)\widehat{K}_n\left(x_e,x,\Z,t,\boldsymbol{y}\right) = B_n(x,\Z,t)K_n\left(x,\Z,t,\boldsymbol{y}\right).
$$
Thus using Assumption \ref{assumption H_technical} in \eqref{def I_t hat} we get 
$$
\int_0^T\int_{\widehat{E}}\widehat{I}_t\ \textrm{d}t\textrm{d}r\sum_{i\geq1}\delta_{i}(\text{d}n)\mathcal{M}_i(\text{d}\boldsymbol{y})=\int_0^TI_t\text{d}t. 
$$ 
%Then, conditionally on $\widehat{\mathcal{F}}^*_{\infty}$, $(\mathring{\mathcal{W}}_{t\wedge T^m}, t\geq 0)$ is a submartingale. 
Thus, using Assumption \ref{assumption H_technical} we get, for all $T\leq \widehat{T}^m$
\begin{align*}
\mathbb{E}\left.\left[\mathring{W}_T(\psi) \right\vert\widehat{\mathcal{F}}^*_{T}\right]&= \mathbb{E}\left[\mathring{W}_0(\psi)\right] \\
&\hspace{-0.8cm}+\int_0^T\int_{\widehat{E}^*}\sum_{i=1}^n\mathbbm{1}_{\left\{i \neq J\left(Y_{t\minus},\chi_{t\minus},t,\boldsymbol{y} \right) \right\}}\psi\left(y^i,\Zhat_{t}^+\left(Y_t,\boldsymbol{y}\right),t\right)\exp\left(-\int_0^t\Lambda\left(Y_s,\chi_s,s\right)\text{d}s\right)\\
&  \hspace{3.2cm} \times \mathbbm{1}_{\left\{r\leq \widehat{B}^*_n\left(Y_{t\minus},\chi_{t\minus},t\right)\widehat{K}^*_n\left(Y_{t\minus},\chi_{t\minus},t,\boldsymbol{y} \right)\right\}}  \widehat{Q}^*\left(\text{d}t,\text{d}r,\text{d}n,\text{d}\boldsymbol{y}  \right).
\end{align*} 
Using the positivity of $\psi$ we get a majorant by adding the new spine in the sum.
We then take the supremum over $\Zhatset\times\mathbb{R}_+$ in the indicator function and notice that for all $n \geq 0$, all $(x,\Z,t) \in \Zhatset\times\mathbb{R}_+$,
$$\widehat{B}^*_n\left(x,\Z,t\right) = \widehat{p}^*_n\left(x,\Z,t\right)\sum_{k\geq 0}\widehat{B}^*_k(x,\Z,t) \leq \overline{\widehat{p}}^*_n \overline{\tau}$$ 
where we used Assumption \ref{assumption H} for the last inequality. Putting these steps together and using the lower bound for $\mathcal{G}\psi/\psi$ in Assumption \ref{assumption H} we get:
\begin{multline}\label{eq: maj w ring}
\mathbb{E}\left.\left[\mathring{W}_T(\psi) \right\vert\widehat{\mathcal{F}}^*_{T}\right]\leq \mathbb{E}\left[\mathring{W}_0(\psi)\right] +  \int_0^T\int_{\widehat{E}^*}\mathbbm{1}_{\left\{r\leq \overline{\tau}\overline{\widehat{p}}^*_n\overline{\widehat{K}}^*_n(\boldsymbol{y})\right\}}\\
\times \exp\left\{\underset{(x,\Z,t) \in \Zhatset\times\mathbb{R}_+}{\sup}\left(\log\left(\sum_{i=1}^k\psi\left(y^i,\nu_{+}\left(x, \boldsymbol{y}\right),s\right)\right)\right)-ct \right\}  \widehat{Q}^*\left(\textrm{d}t,\textrm{d}r,\textrm{d}n,\textrm{d}\boldsymbol{y}\right).
\end{multline}
Finally we use \eqref{crit technic} to get an upper bound of the term in the indicator 
$$
\overline{\tau}\overline{\widehat{p}}^*_n\overline{\widehat{K}}^*_n(\boldsymbol{y}) \leq \overline{\tau}\left(\sum_{i\geq 1}\overline{\widehat{p}}^*_i\right)\left(\sup_{i\geq 1}\int_{\mathcal{X}^{i}}\overline{\widehat{K}}^*_i(\boldsymbol{y})\mathcal{M}_k(\text{d}\boldsymbol{y})\right)\left(\frac{\overline{\widehat{p}}^*_n}{\sum_{i\geq 1}\overline{\widehat{p}}^*_i}\right) \left(\frac{\overline{\widehat{K}}^*_n(\boldsymbol{y})}{\int_{\mathcal{X}^{n}}\overline{\widehat{K}}^*_n(\boldsymbol{w})\mathcal{M}_n(\text{d}\boldsymbol{w})} \right)
$$
and introduce the sequences of independent random variables $(S_k,k\geq 1)$ of exponential law of parameter $\overline{\tau}\left(\sum_{i\geq 1}\overline{\widehat{p}}^*_i\right)\left(\sup_{i\geq 1}\int_{\mathcal{X}^{i}}\overline{\widehat{K}}^*_i(\boldsymbol{y})\mathcal{M}_i(\text{d}\boldsymbol{y})\right)$. We also introduce the independent sequence of couples of random variables $((N_k,\boldsymbol{Y}_k), k\geq 1)$ such that for all $k$, $N_k$ follows the law $(\overline{\widehat{p}}^*_{\cdot}/\sum_{i\geq 1}\overline{\widehat{p}}^*_i)$ and $\boldsymbol{Y}_k\vert N_k$ follows the law $\overline{\widehat{K}}^*_{N_k}(\boldsymbol{y})/\int_{\mathcal{X}^{\mathbb{N}}}\overline{\widehat{K}}^*_{N_k}(\boldsymbol{w})\mathcal{M}_{N_k}(\text{d}\boldsymbol{w})$.
Thus we can write \eqref{eq: maj w ring} with a compound Poisson process
\begin{multline*}
\mathbb{E}\left.\left[\mathring{W}_T(\psi) \right\vert\widehat{\mathcal{F}}^*_{T}\right]\leq \mathbb{E}\left[\mathring{W}_0(\psi)\right] \\+  \sum_{k\geq 1}\exp\left\{\underset{(x,\Z,t) \in \Zhatset\times\mathbb{R}_+}{\sup}\left(\log\left(\sum_{j=1}^{N_k}\psi\left(Y^j_k,\nu_{+}\left(x, \boldsymbol{Y}_k\right),s\right)\right)\right) - c\sum_{j=1}^kS_j\right\}.
\end{multline*} 
In order to show that the series is almost surely finite, we introduce the following sequence of events: for every $K>0$ and every $k \geq 0$
$$
B_k^K := \left\{\underset{(x,\Z,t) \in \Zhatset\times\mathbb{R}_+}{\sup}\left(\log\left(\sum_{j=1}^{N_k}\psi\left(Y^j_k,\nu_{+}\left(x, \boldsymbol{Y}_k\right),s\right)\right)\right)\geq kK \right\}.
$$
Using the law of the couple of random variables $((N_k,\boldsymbol{Y}_k), k\geq 1)$ we get that
\begin{multline*}
\sum_{k\geq 1}\mathbb{P}\left(B_k^K\right) = \sum_{k\geq 1} \sum_{n\geq 1}   \frac{\overline{\widehat{p}}^*_n}{\sum_{i\geq 1}\overline{\widehat{p}}^*_i} \\
\times \int_{\mathcal{X}^{n}}\mathbbm{1}_{\left\{\underset{(x,\Z,t) \in \Zhatset\times\mathbb{R}_+}{\sup}\left(\log\left(\sum_{j=1}^{n}\psi\left(y^j,\nu_{+}\left(x, \boldsymbol{y}\right),s\right)\right)\right)\geq kK \right\}}\frac{\overline{\widehat{K}}^*_n(\boldsymbol{y})\mathcal{M}_n(\text{d}\boldsymbol{y})}{\int_{\mathcal{X}^{n}}\overline{\widehat{K}}^*_n(\boldsymbol{w})\mathcal{M}_n(\text{d}\boldsymbol{w})}.
\end{multline*}
Using that $\int_{\mathcal{X}^{n}}\overline{\widehat{K}}^*_n(\boldsymbol{w})\mathcal{M}_n(\text{d}\boldsymbol{w}) \geq 1$, $\sum_{i\geq 1}\overline{\widehat{p}}^*_i\geq 1$ and $\sum_{k\geq 1}\mathbbm{1}_{A\geq kK} \leq 0 \vee (A/K)$, we get
\begin{multline*}
\sum_{k\geq 1}\mathbb{P}\left(B_k^K\right) \leq \\
0 \vee\left( \frac{1}{K}\sum_{k\geq 1} \overline{\widehat{p}}^*_k \int_{\mathcal{X}^{\mathbb{N}}}\underset{(x,\Z,t) \in \Zhatset\times\mathbb{R}_+}{\sup}\left(\log\left(\sum_{j=1}^{k}\psi\left(y^j,\nu_{+}\left(x, \boldsymbol{y}\right),s\right)\right)\right)\overline{\widehat{K}}^*_k(\boldsymbol{y})\mathcal{M}_n(\text{d}\boldsymbol{y})\right).
\end{multline*}
We now use the Borel Cantelli lemma -see Theorem 2.3.1 in \cite{Durrett}- with \eqref{eq H-} to ensure that 
$$\limsup_{n \rightarrow\infty}\frac{\underset{(x,\Z,t) \in \Zhatset\times\mathbb{R}_+}{\sup}\left(\log\left(\sum_{j=1}^{N_n}\psi\left(Y^j_n,\nu_{+}\left(x, \boldsymbol{Y}_n\right),s\right)\right)\right)}{n} = 0 \quad \text{a.s.}
$$
Notice that and \eqref{crit technic} ensures that, asymptotically, $\sum_{j=1}^kS_j$ grows linearly with $k$. Thus
$$\sum_{k\geq 1}\exp\left\{\underset{(x,\Z,t) \in \Zhatset\times\mathbb{R}_+}{\sup}\left(\log\left(\sum_{j=1}^{N_k}\psi\left(Y^j_k,\nu_{+}\left(x, \boldsymbol{Y}_k\right),s\right)\right)\right) - c\sum_{j=1}^kS_j\right\}< \infty \quad  \text{a.s.}
$$
The upper bound for the branching rate in Assumption \ref{assumption H} ensures that $\widehat{T}^m \rightarrow \infty$ almost surely as $m$ tends to infinity, and Fatou's lemma gives that $\sup_{t\geq0}\mathbb{E}_{\bar{z}}\left[\mathring{W}_t(\psi)\big\vert\widehat{\mathcal{F}}^*_{\infty}\right] < \infty$. Thus, the quenched submartingale $(\mathring{W}_{t \wedge \widehat{T}^m}(\psi), t\geq 0)$ converges almost surely to a finite random variable and
$$\limsup_{t \to +\infty}\mathring{W}_t(\psi) < +\infty \ \textrm{ a.s.}$$ 
We conclude the proof using \eqref{eq inf} and Lemma \ref{lemma: H finite}.
\end{proof}

\appendix
\section{Proof of Proposition \ref{prop: generator spine} }\label{appendix a}
The spine process $\left(\left(E_t,\Zbarhat_t\right), t\geq 0\right)$ defined by Dynamics \ref{spinal outside rates} and \ref{spine rates} can be rigorously expressed as the solution of a SDE driven by a multivariate point measure. 

We recall that the offspring traits at birth $\boldsymbol{y}=(y^1,\cdots,y^n)$ of an individual of trait $x$ in a population $\Z$ at time $t$ is given by the law $K_n(x,\Z,t,\boldsymbol{y})\mathcal{M}_n(\text{d}\boldsymbol{y})$. 

Let $\widehat{E}= \mathbb{R}_+ \times \mathcal{U} \times \mathbb{R}_+ \times \mathbb{N} \times \mathcal{X}^{\mathbb{N}}$ and $\widehat{E}^*= \mathbb{R}_+ \times \mathbb{R}_+ \times \mathbb{N} \times \mathcal{X}^{\mathbb{N}}$ and let $\widehat{Q}\left(\textrm{d}s,\textrm{d}u,\textrm{d}r,\textrm{d}n,\textrm{d}\boldsymbol{y} \right)$ and $\widehat{Q}^*\left(\textrm{d}s,\textrm{d}r,\textrm{d}n,\textrm{d}\boldsymbol{y}\right)$
be two independent Poisson point measures on $\widehat{E}$ and $\widehat{E}^*$ with respective intensity measures $\textrm{d}s(\sum_{v \in\mathcal{U}}\delta_v(\textrm{d}u))\textrm{d}r\sum_{i\geq1}\delta_{i}(\text{d}n)\mathcal{M}_i(\text{d}\boldsymbol{y})$ outside the spine and $\textrm{d}s\textrm{d}r\sum_{i\geq1}\delta_{i}(\text{d}n)\mathcal{M}_i(\text{d}\boldsymbol{y})$ for the spine. We denote by $\big(\widehat{\mathcal{F}}_t, t\geq 0\big)$ the canonical filtration associated with these Poisson point measures. The $\widehat{\mathcal{F}}_t$-adapted set of labels of the living individuals in the population outside the spine is denoted $\mathring{\mathbb{G}}(t)$. Finally we recall the notation $J(x,\Z,t,\boldsymbol{y})$, introduced in \eqref{spine proba}, for the random index of the new spine after the branching of a spine of trait $x$ in a population $\Z$ at time $t$ to an offspring $\boldsymbol{y}$.

Let $\left(e,\bar{z}\right) \in \Zbarhatset$. 
Under Assumptions \ref{assumption A} and \ref{assumption C}, the process $\left(\left(E_t,\Zbarhat_t\right), t\geq 0\right)$ is the unique $\widehat{\mathcal{F}}_t$-adapted solution, for every function $g \in \mathcal{C}^1\left(\mathcal{U}\times\mathcal{U}\times\mathcal{X},\mathbb{R} \right)$ and $t\geq0$, of the following equation
\begin{align}\label{PPM spinal}
\left\langle\Zbarhat_t,g\left(E_t,\cdot\right)\right\rangle :&= \langle\bar{z},g(e,\cdot)\rangle + \int_0^t \int_{\mathcal{U}\times\mathcal{X}}\frac{\partial g}{\partial x}(E_s,u,x)\cdot \mu\left(x,\Zhat_s,s\right)\Zbarhat_s(\textrm{d}u,\textrm{d}x)\textrm{d}s \nonumber \\
&\quad+\int_{\widehat{E}}
 \left[\sum_{i=1}^ng\left(E_{s\minus},ui,\boldsymbol{y}\right) - g\left(E_{s\minus},u,X^u_{s\minus}\right)\right]  \nonumber\\
& \qquad \quad \times \mathbbm{1}_{\left\{u\in \mathring{\mathbb{G}}_{s\minus}\right\}}  \mathbbm{1}_{\left\{r \leq \widehat{B}_n(Y_{s\minus},X^u_{s\minus},\Zhat_{s\minus},s)\widehat{K}_n(Y_{s\minus},X^u_{s\minus},\Zhat_{s\minus},s,\boldsymbol{y})\right\}}\widehat{Q}\left(\textrm{d}s,\textrm{d}u,\textrm{d}r,\textrm{d}n,\textrm{d}\boldsymbol{y} \right) \nonumber\\
&\quad+\int_{\widehat{E}^*}
 \left[\sum_{i=1}^ng\left(E_{s\minus}J(Y_{s\minus},\Zhat_{s\minus},s,\boldsymbol{y}),E_{s\minus}J(Y_{s\minus},\Zhat_{s\minus},s,\boldsymbol{y}),\boldsymbol{y}\right) - g\left(E_{s\minus},E_{s\minus},Y_{s\minus}\right)\right]  \nonumber\\
& \qquad \qquad \times \mathbbm{1}_{\left\{\ r \leq \widehat{B}^*_k(Y_{s\minus},\Zhat_{s\minus},s)\widehat{K}^*_k(Y_{s\minus},\Zhat_{s\minus},s,\boldsymbol{y})\right\}}\widehat{Q}^*\left(\textrm{d}s,\textrm{d}r,\textrm{d}n,\textrm{d}\boldsymbol{y}\right).
\end{align}
This assertion is shown following the same computations than for proof of Proposition \ref{prop: unique original} and \cite{Marguet19}, using the positivity of the $\psi$ function. Assumption \ref{assumption C} leads to the same bound as  Assumption \ref{assumption A}.3 in the case of a spine process.

Now we establish the expression of the generator of the marginal spine process. We recall the notation $\Z^+(x,\boldsymbol{y}) := \Z + \sum_{i=1}^n\delta_{y^i} - \delta_{x}$, introduced in \eqref{nu+}. 
Taking expectations of \eqref{PPM spinal} for the marginal process on $\Zhatset$, we derive the non-homogeneous infinitesimal operator of $((Y_t,\Zhat_t), t\geq 0)$, following steps in \cite{FM04}. It is given by the operator $\widehat{L}_{\psi}$, defined for every $F \in \mathcal{D}$, introduced in \eqref{def: D space}, and $(x_e,\Z,t) \in \Zhatset\times\mathbb{R}_+$, by
\begin{align*}
\widehat{L}_{\psi}F(x_e,\Z,t) &:= GF\left(x_e,\Z,t\right) \\
&\hspace{-1.9cm}+ \sum_{n\geq 0} \Bigg\{\widehat{B}^*_n(x_e,\Z,t)\int_{\mathcal{X}^n} \left[  \frac{\sum_{i=1}^nF\psi\left(y^i, \Z^+(x_e,\boldsymbol{y}),t \right)}{\sum_{j=1}^n\psi\left(y^j, \Z^+(x_e,\boldsymbol{y}),t \right)} \right. - F(x_e,\Z,t)\Bigg] \widehat{K}^*_n\left(x_e,\Z,t,\boldsymbol{y}\right)\mathcal{M}_n(\text{d}\boldsymbol{y}) \nonumber \\
& \hspace{-0.8cm}+ \int_{\mathcal{X}} \widehat{B}_n(x,\Z,t)\int_{\mathcal{X}^n} \Bigg[ F\left(x_e,\Z^+(x,\boldsymbol{y}),t \right) - F(x_e,\Z,t) \Bigg] \widehat{K}_n\left(x,\Z,t,\boldsymbol{y}\right)\mathcal{M}_n(\text{d}\boldsymbol{y})\Z(\textrm{d}x)\nonumber\\
& \hspace{0.4cm}- \widehat{B}_n(x_e,\Z,t)\int_{\mathcal{X}^n} \Bigg[ F\left(x_e, \Z^+(x_e,\boldsymbol{y})\right) - F(x_e,\Z) \Bigg] \widehat{K}_n\left(x_e,\Z,t,\boldsymbol{y}\right)\mathcal{M}_n(\text{d}\boldsymbol{y})\Bigg\}.
\end{align*}
The first line gives the dynamical evolution between branching events. The second is related to spinal branching events and the choice of a new spinal individual among the offspring population. The last two lines describe the branching events outside the spine, for all individuals but the spinal one.
Using that $G\left[\psi F\right] = FG\psi  + \psi GF$, we get
$$
\frac{\mathcal{G}\left[\psi F\right]}{\psi}\left(\cdot\right) - \frac{\mathcal{G}\psi}{\psi}F\left(\cdot\right)  = GF\left(\cdot\right)  + \sum_{n\geq 0} \frac{B\left(\cdot\right)}{\psi\left(\cdot\right)}\widehat{\mathcal{T}}_n\left(\cdot\right),
$$
where the jump part $\widehat{\mathcal{T}}_n$ is defined for all $\left(x_e,\Z,t\right) \in \Zhatset\times\mathbb{R}_+$ by
\begin{align}\label{eq: jump part}
\widehat{\mathcal{T}}_n&(\rho_e) := p_n(\rho_e)\int_{\mathcal{X}^n} \sum_{i=1}^n \psi F\left(y_i, \Z^+(x_e,\boldsymbol{y}),t \right)  - \psi F(x_e,\Z^+(x_e,\boldsymbol{y}),t) K_n\left(\rho_e,\boldsymbol{y}\right)\mathcal{M}_n(\text{d}\boldsymbol{y}) \nonumber \\
& + \int_{\mathcal{X}}p_n(x,\Z,t)\int_{\mathcal{X}^n} [\psi F\left(x_e,\Z^+(x,\boldsymbol{y}),t \right)- \psi F(\rho_e) ]K_n\left(x,\Z,t,\boldsymbol{y}\right)\mathcal{M}_n(\text{d}\boldsymbol{y})\Z(\textrm{d}x)\nonumber\\
&- p_n(\rho_e)\int_{\mathcal{X}^n} \left[\sum_{i=1}^n \psi\left(y_i, \Z^+(x_e,\boldsymbol{y}),t \right)  - \psi(x_e,\Z^+(x_e,\boldsymbol{y}),t)\right]F(\rho_e) K_n\left(\rho_e,\boldsymbol{y}\right)\mathcal{M}_n(\text{d}\boldsymbol{y}) \nonumber \\
& - \int_{\mathcal{X}}p_n(x,\Z,t)\int_{\mathcal{X}^n} [\psi\left(x_e,\Z^+(x,\boldsymbol{y}),t \right)- \psi(\rho_e) ]F(\rho_e)K_n\left(x,\Z,t,\boldsymbol{y}\right)\mathcal{M}_n(\text{d}\boldsymbol{y})\Z(\textrm{d}x)
\end{align}
where we used the notation $\rho_e := (x_e,\Z,t)$.
Rearranging the terms in \eqref{eq: jump part} we get
\begin{align*}
\widehat{\mathcal{T}}_n(\rho_e)& = p_n(\rho_e)\int_{\mathcal{X}^n} \sum_{i=1}^n \psi\left(y_i, \Z^+(x_e,\boldsymbol{y}) ,t\right) \left[F \left(y_i, \Z^+(x_e,\boldsymbol{y}),t \right) - F(\rho_e)\right] K_n\left(\rho_e,\boldsymbol{y}\right)\mathcal{M}_n(\text{d}\boldsymbol{y}) \nonumber \\
& + \int_{\mathcal{X}}p_n(x,\Z,t)\\
&\hspace{0.65cm}\times\int_{\mathcal{X}^n} [F\left(x_e,\Z^+(x,\boldsymbol{y}),t \right)- F(\rho_e) ]\psi\left(x_e, \Z^+(x,\boldsymbol{y}),t\right)K_n\left(x,\Z,t,\boldsymbol{y}\right)\mathcal{M}_n(\text{d}\boldsymbol{y})\Z(\textrm{d}x)\nonumber\\
&- p_n(\rho_e)\int_{\mathcal{X}^n} [ F\left(x_e, \Z^+(x_e,\boldsymbol{y}),t\right) - F(\rho_e) ]\psi\left(x_e, \Z^+(x_e,\boldsymbol{y}),t\right)K_n\left(\rho_e,\boldsymbol{y}\right)\mathcal{M}_n(\text{d}\boldsymbol{y}).
\end{align*}
We conclude the proof using the branching rates introduced in Dynamics \ref{spinal outside rates} and \ref{spine rates}.

\section{Proof of Lemma \ref{chain rule}}\label{appendix chain rule}
The recursive equality for $\xi_k$ derives from the fundamental theorem of calculus. 
Let $k \geq 1$,
\begin{equation*}
\frac{\xi_k\left(E_k,\left(\widehat{\mathcal{V}}_i, 0 \leq i \leq k\right) \right)}{\xi_{k-1}\left(E_{k-1},\left(\widehat{\mathcal{V}}_i, 0 \leq i \leq k-1\right) \right)} =\frac{\psi\left(Y_{\widehat{T}_{k-1}},\Zhat_{\widehat{T}_{k-1}}\right)}{\psi\left(Y_{\widehat{T}_k},\Zhat_{\widehat{T}_k}\right)} \exp\left(\int_{\widehat{T}_{k-1}}^{\widehat{T}_{k}}\frac{\mathcal{G}\psi\left(Y_s,\Zhat_s\right)}{\psi\left(Y_s,\Zhat_s\right)}\textrm{d}s\right).
\end{equation*}
We notice that $\mathcal{G}$ introduced in \eqref{def: mathcal G}, verify for all $(x_e,\Z,t) \in \Zhatset\times\mathbb{R}_+$ 
\begin{equation*}%\label{eq: Gpsi lambda}
\frac{\left[\mathcal{G}-G\right]\psi(x_e,\Z,t)}{\psi(x_e,\Z,t)}= 
\widehat{\tau}_{\textrm{tot}}(x_e,\Z,t) -  \int_{\mathcal{X}}B(x,\Z,t)\Z(\textrm{d}x),
\end{equation*}
where $\widehat{\tau}_{\textrm{tot}}$ is defined in \eqref{tau hat tot}. Then, using derivative chain rule we get that $$G\left(\ln\circ \psi\right)(x_e,\Z,t) = \frac{G\psi(x_e,\Z,t)}{\psi(x_e,\Z,t)}. $$ 
Between successive branching events, the evolution of the process is purely deterministic and we can apply the fundamental theorem of calculus:
$$
\int_{\widehat{T}_{k-1}}^{\widehat{T}_{k}}G\left(\ln\circ \psi\right)\left(Y_s,\Zhat_s,s\right)\textrm{d}s = \ln\left(\psi\left(Y_{\widehat{T}_{k}\minus},\Zhat_{\widehat{T}_{k}\minus},\widehat{T}_{k}\right)\right) -\ln\left(\psi\left(Y_{\widehat{T}_{k-1}},\Zhat_{\widehat{T}_{k-1}},\widehat{T}_{k-1}\right)\right).
$$
We thus have
$$
\exp\left(\int_{\widehat{T}_{k-1}}^{\widehat{T}_{k}}\frac{\mathcal{G}\psi\left(Y_s,\Zhat_s,s\right)}{\psi\left(Y_s,\Zhat_s,s\right)}\textrm{d}s\right) = \frac{\psi\left(Y_{\widehat{T}_{k}\minus},\Zhat_{\widehat{T}_{k}\minus},\widehat{T}_{k}\right)}{\psi\left(Y_{\widehat{T}_{k-1}},\Zhat_{\widehat{T}_{k-1}},\widehat{T}_{k-1}\right)}\exp\left(\int_{\widehat{T}_{k-1}}^{\widehat{T}_{k}}\lambda\left(Y_s,\Zhat_s,s\right)\textrm{d}s\right),
$$ 
that concludes the proof.

\section{Proof of Lemmas \ref{lemma: H finite} and \ref{lemma: H infty} }\label{appendix immigration}

\begin{proof}[Proof of Lemma \ref{lemma: H finite}]
We recall the notations $((\widehat{T}^*_k,\widehat{N}^*_k,(\widehat{X}^{i}_k, 1 \leq i \leq \widehat{N}^*_k)), k\geq0)$ for the sequence of jumps times, number of children and type corresponding to branching events of the spine. The trait of the new spine is denoted $Y_{\widehat{T}^*_k}$. We also recall the sequence of filtrations $\left(\mathcal{F}_k,k\geq 1\right)$, introduced in \eqref{lemma: filtration}.
Using these notations, with the lower bound for $\mathcal{G}\psi/\psi$ in Assumption \ref{assumption H} and the positivity of the function $\psi$, we have for all $k\geq 0$
\begin{equation}\label{eq: log - ct}
\log\left(\psi\left(Y_{\widehat{T}^*_k},\Zhat_{\widehat{T}^*_k},\widehat{T}^*_k \right)\right)-\int_0^{\widehat{T}^*_k}\Lambda\left(Y_s,\Zhat_s\right)\text{d}s \leq \log\left(\sum_{i=1}^{\widehat{N}^*_k}\psi\left(\widehat{X}^{i}_k,\Zhat_{\widehat{T}^*_k},\widehat{T}^*_k \right)\right) -c\widehat{T}^*_k
\end{equation}
as $Y_{T_k} \in \left\{X^i_k, 1\leq i\leq N_k\right\}$.
We introduce for all $K>0$ and all $k\geq 1$, the event
$$
B^K_k := \left\{\log\left(\sum_{i=1}^{\widehat{N}^*_k}\psi\left(\widehat{X}^{i}_k,\Zhat_{\widehat{T}^*_k},\widehat{T}^*_k \right)\right)\geq kK \right\} \in \mathcal{F}_{k}.
$$
First notice that 
\begin{multline*}
\sum_{k \geq 1}\mathbb{P}\left(B^K_k \Big\vert \mathcal{F}_{k-1}\right) =  \sum_{k \geq 1}\sum_{n\geq 1} \widehat{p}^*_n\left(Y_{\widehat{T}^*_k\minus},\Zhat_{\widehat{T}^*_k\minus},{\widehat{T}^*_k\minus}\right)\\
\times \int_{\mathcal{X}^n}\mathbbm{1}_{\left\{\log\left(\sum_{i=1}^{n}\psi\left(y^i,\Zhat_{\widehat{T}^*_k},{\widehat{T}^*_k}\right) \right) \geq kK\right\}}\widehat{K}^*_n\left(Y_{\widehat{T}^*_k\minus},\Zhat_{\widehat{T}^*_k\minus},{\widehat{T}^*_k\minus},\mathbf{y}\right)\mathcal{M}_n\left(\text{d}\mathbf{y}\right).
\end{multline*}
Taking the supremum over $\Zhatset\times\mathbb{R}_+$, we get that
\begin{multline*}
\sum_{k \geq 1}\mathbb{P}\left(B^K_k \Big\vert \mathcal{F}_{k-1}\right) \leq \\ \sum_{n\geq 1} \overline{\widehat{p}}^*_n\int_{\mathcal{X}^n}\sum_{k\geq 1}\mathbbm{1}_{\left\{\underset{(x,\Z,t) \in \Zhatset\times\mathbb{R}_+}{\sup}\left(\log\left(\sum_{i=1}^{n}\psi\left(y^i,\Z^+(x,\mathbf{y}),t\right) \right)\right)\geq kK\right\}}\overline{\widehat{K}}^*_n\left(\mathbf{y}\right)\mathcal{M}_n\left(\text{d}\mathbf{y}\right).
\end{multline*}
We recall the notation $\Z^+(x,\boldsymbol{y}) := \Z + \sum_{i=1}^n\delta_{y^i} - \delta_{x}$, introduced in \eqref{nu+}.
Finally, using that for all $A\in\mathbb{R}$, $\sum_{k\geq1}\mathbbm{1}_{A\geq kK} \leq 0 \vee (A/K)$ we get
\begin{multline*}
\sum_{k \geq 1}\mathbb{P}\left(B^K_k \Big\vert \mathcal{F}_{k-1}\right) \leq \\ 0 \vee \left(\frac{1}{K}\sum_{n\geq 1}\overline{\widehat{p}}^*_n \int_{\mathcal{X}^n} \sup_{(x,\Z,t) \in \Zhatset\times\mathbb{R}_+}\left[\log\left(\sum_{i=1}^n\psi(y^i,\Z^+(x,\mathbf{y}),t) \right)\right]\overline{\widehat{K}}^*_n\left(\mathbf{y}\right)\mathcal{M}_n\left(\text{d}\mathbf{y}\right)\right).
\end{multline*}
Thus \eqref{eq H-} ensures that 
\begin{equation*}
\sum_{k \geq 1}\mathbb{P}\left(B^K_k \Big\vert \mathcal{F}_{k-1}\right) < +\infty
\end{equation*}
and the second Borel-Cantelli Lemma- see Theorem 4.3.4 in \cite{Durrett}- gives that for all $K>0$, eventually
$$
\log\left(\psi\left(Y_{\widehat{T}^*_k},\Zhat_{\widehat{T}^*_k},{\widehat{T}^*_k}\right) \right) \leq kK \quad \text{a.s.}
$$
Furthermore, the bounds on the jump rate in Assumption \ref{assumption H} ensure that $\widehat{T}^*_k$ grows linearly almost surely to infinity as $k$ tends to infinity. Thus, using \eqref{eq: log - ct} we get that almost surely 
$$\limsup_{k\rightarrow\infty}\left[\log\left(\psi\left(Y_{\widehat{T}^*_k},\Zhat_{\widehat{T}^*_k},\widehat{T}^*_k \right)\right)-\int_0^{\widehat{T}^*_k}\Lambda\left(Y_s,\Zhat_s\right)\text{d}s\right] = -\infty.$$ 
The fact that $\lim\limits_{k\rightarrow\infty}\widehat{T}^*_k=+\infty$ almost surely concludes the proof.
\end{proof}

\begin{proof}[Proof of Lemma \ref{lemma: H infty}]
First, notice that, using a log-sum inequality along with the positivity of the function $\psi$ we get, for all $n\geq 2$, $(x,\nu,t) \in \Zhatset\times\mathbb{R}_+$, $\mathbf{y}\in\mathcal{X}^n$ and $j_n\in \{1,\cdots,n\}$
\begin{align*}
\log\left(\sum_{i=1}^n\psi(y^i,\Z^+(x,\mathbf{y}),t)\right) &= \log\left(\sum_{i=1}^n\mathbbm{1}_{i\neq j_n}\psi(y^i,\Z^+(x,\mathbf{y}),t) + \psi(y^{j_n},\Z^+(x,\mathbf{y}),t)\right)\\
&\leq \log(2) + \frac{\psi(y^{j_n},\Z^+(x,\mathbf{y}),t)}{\sum_{i=1}^n\psi(y^i,\Z^+(x,\mathbf{y}),t)}\log\left(\psi(y^{j_n},\Z^+(x,\mathbf{y}),t)\right)\\
+& \frac{\sum_{i=1}^n\mathbbm{1}_{i\neq j_n}\psi(y^i,\Z^+(x,\mathbf{y}),t)}{\sum_{i}\psi(y^i,\Z^+(x,\mathbf{y}),t)}\log\left(\sum_{i=1}^n\mathbbm{1}_{i\neq j_n}\psi(y^i,\Z^+(x,\mathbf{y}),t)\right).
\end{align*}
Thus, taking the infimum in the previous inequality and using \eqref{crit inf infinite}, we have for any arbitrary state $(x_0,\Z_0,t_0)\in \Zhatset\times\mathbb{R}_+$
\begin{multline}\label{eq: inf inter}
\sum_{n\geq 2} \widehat{p}^*_n(\rho_0) \int_{\mathcal{X}^n}\frac{\psi(y^{j_n},\Z_0^{+}(x_0,\mathbf{y}),t_0)}{\sum_{i=1}^n\psi(y^i,\Z_0^{+}(x_0,\mathbf{y}),t_0)}\log\left(\psi(y^{j_n},\Z_0^{+}(x_0,\mathbf{y}),t_0)\right)\widehat{K}^*_n(\rho_0,\mathbf{y})\mathcal{M}_n(\text{d}\mathbf{y}) \\ +\sum_{n\geq 2}\underline{\widehat{p}}^*_n \int_{\mathcal{X}^n}\inf_{(x,\Z,t) \in \Zhatset\times\mathbb{R}_+}\left[\log\left(\sum_{i\neq j_n}\psi(y^i,\Z^+(x,\mathbf{y}),t) \right)\right]\underline{\widehat{K}}^*_n(\mathbf{y})\mathcal{M}_n(\text{d}\mathbf{y}) = +\infty
\end{multline}
where we used the notation $\rho_0 := x_0,\Z_0,t_0$.
We conclude the proof using Assumption \ref{assumption H_inf_technical} to ensure that the first term in \eqref{eq: inf inter} is finite.
\end{proof}

\section{Algorithmic construction}\label{appendix algorithm}
We propose an algorithm that generates trajectories of the $\psi$-spine process introduced in \eqref{function psi}. We denote by $F^{-1}_{\textrm{div}},F^{-1}_{\textrm{loss}}$ and $F^{*-1}_{\textrm{loss}}$ the generalized inverse of the cumulative distribution functions of the random variables $\widehat{\Lambda}, \widehat{\Theta}$ and $\Theta$, defined in \eqref{q hat} and \eqref{p hat}. 
Using the deterministic evolution of the traits and knowing the branching events, it is easy to recover the traits at all time of the individuals. 

We first generate a realization of an homogeneous Poisson point process of intensity $1$ on the interval $[t_1,t_2]$, starting from $n$ roots following classical algorithms \cite{miles1970homogeneous}. It returns a list $T_{\textrm{div}}=[t_1,T_1,T_2, \cdots]$ of increasing times, a list $I_{\textrm{div}} = [i_1, i_2, \cdots]$ containing the numbering of the individuals that branched at these times, a list $L_{\textrm{div}} = [\lambda_1, \lambda_2, \cdots]$ of fractions of mass at birth, and the numbering $E$ of the spinal individual. We call numbering a labeling method of the individuals in the population that does not encodes the whole lineage of the individual.
The numbering choice is arbitrary and we chose to add every new individuals at the end of the hidden list, thus at each division event a parent is chosen uniformly in the population and its child is added at the end of the list. This computation is handled by the function Tree explained in Figure \ref{algo:tree}, where $i_0$ is the initial numbering of the spinal individual.
\begin{figure}[htbp]
\centering
\begin{algorithmic}
\Function{$(T_{\textrm{div}},I_{\textrm{div}},E,L_{\textrm{div}})$ = Tree}{$t_1,t_2,n,i_0,F^{-1}_{\textrm{div}}$}
\State $(T_{\textrm{div}},N) = ([t_1],n)$
\State {\color{gray}\% Step 1; generating the division times}
\While{$T_{\textrm{div}}[\textrm{end}] < t_2$}
	\State Generate $u \sim \textrm{uniform}(0,1)$
	\State append $T_{\textrm{div}}[\textrm{end}]-\ln(u)/N$ to $T_{\textrm{div}}$ \Comment{Branching time}
	\State $N \gets N+1$ \Comment{New population size}
\EndWhile
\State \textbf{if} {$T_{\textrm{div}}[\textrm{end}] > t_2$} \textbf{then} pop $T_{\textrm{div}}$
\State {\color{gray}\% Step 2; generating the fractions at birth and choosing the spinal individual}
\State $(I_{\textrm{div}},E) = ([0,\cdots,0],i_0)$ \Comment{$I_{\textrm{div}}$ of size $\textrm{length}(T_{\textrm{div}})$}
\For{$i\in\{1, \cdots, \textrm{length}(T_{\textrm{div}})-1\}$}
		\State Generate $I \sim \textrm{uniform}(\{1,\cdots,n+i-1\})$ \Comment{Branching individual}
		\State $I_{\textrm{div}}[i] \gets I$
		\State Generate $v,q \sim \textrm{uniform}(0,1)$ 
	\State append $F^{-1}_{\textrm{div}}(u)$ to $L_{\textrm{div}}$ \Comment{Fraction $\lambda$ at birth}
	\State \textbf{if} {$E = i$ and $p > F^{-1}_{\textrm{div}}(u)$} \textbf{then} $E \gets n+i$ \Comment{New spinal individual position}
	\EndFor
\EndFunction
\end{algorithmic}
\caption{Simulation algorithm of the jumps times until time $T$}
\label{algo:tree}
\end{figure}

From the list $I_{\textrm{div}}$, it is possible to retrieve the lineage of all individuals using any chosen labeling method. In our case we used the Ulam-Harris-Neveu notations and the algorithm generating the list $U$ of labels from the list $I$ previously obtained and the initial size of the population $n$ is detailled Figure \ref{algo:labeling}.
\begin{figure}[htbp]
\centering
\begin{algorithmic}
\Function{$U$ = Labels}{$I_{\textrm{div}},n$}
\State $N_{\textrm{div}} = \textrm{length}(I_{\textrm{div}})$
\State $U = [1,\cdots,n,0,\cdots,0]$ \Comment{$U$ is of size $n+N_{\textrm{div}}$}
\For{$i \in  \{1,\cdots,N_{\textrm{div}}\}$}
	\State $(U[I_{\textrm{div}}[i]],U[n+i]]) \gets (U[I_{\textrm{div}}[i]]1,U[I_{\textrm{div}}[i]]2)$ \Comment{$U[I_{\textrm{div}}[i]] \in \mathcal{U}$}
\EndFor
\State $U\gets\textrm{QuickSort}(U)$ \Comment{sorting algorithm in $O((N_{\textrm{div}}+n)\log (N_{\textrm{div}}+n))$}
\EndFunction
\end{algorithmic} 
\caption{Labeling function}
\label{algo:labeling}
\end{figure}
The last line of this algorithm lists the labels of individuals in increasing order, grouping siblings together. The label of the spinal individual is thus $U[E]$.

To compute a trajectory of the spinal process on the interval $[0,T]$, we need to establish the time of jumps and their outcomes. Using the deterministic evolution of the traits and knowing the branching events, it is easy to recover the traits at all time for the individuals. Furthermore, depending on the statistics that one want to evaluate on the system, it might not be necessary to compute the traits of all individuals. For that reason we propose an algorithmic construction of a trajectory of the spinal process, returning the list of jumps times, events and labels of the individuals living at time $T$. This algorithm, presented in Figure \ref{algo:main} also distinguishes the spinal individual in the population by returning the label of the spinal individual in the living ones at time $T$. To simplify we will denote $\textrm{PPP}_{[t_1,t_2]}(c(\cdot))$ the list of times given by a Poisson point process of intensity $c(\cdot)$ on the interval $[t_1,t_2]$, computed using Lewis' thinning algorithm \cite{Lewis79}.
\begin{figure}
\begin{algorithmic}
\Require model parameters: $d,F^{-1}_{\textrm{div}},F^{-1}_{\textrm{loss}},F^{*-1}_{\textrm{loss}},T$,  initial condition: $z=[x^1,\cdots,x^n]$.
\Ensure $(T_{\textrm{div}},I_{\textrm{div}},E,L_{\textrm{div}},T_{\textrm{loss}},T_{\textrm{loss}}^*, I_{\textrm{loss}}, L_{\textrm{loss}},L_{\textrm{loss}}^*)$
\State {\color{gray}\% Initializing the spine}
\State Generate $u \sim $uniform$(0,1)$
\State $i_0 = 1$
\While{$\sum_{i=1}^{i_0} x^i/(\sum_{i=1}^n x^i)<u$}	
\State $i_0 \gets i_0+1$
\EndWhile
\State {\color{gray}\% Generating binary spinal tree on division events}
\State $(T_{\textrm{div}},I_{\textrm{div}},E,L_{\textrm{div}})$ = Tree($0,T,n,i_0,F^{-1}_{\textrm{div}}$) 
%\State $U$ = Labels($I_{\textrm{div}},n$) \Comment{Computing labels of individuals}
\State {\color{gray}\% Generating loss events for the spine and individuals outside the spine}
\State $\left(T_{\textrm{loss}},T_{\textrm{loss}}^*, I_{\textrm{loss}}\right) = \left([\cdot],[\cdot],[\cdot]\right) $
\For{$i \in \{1,\cdots,\textrm{length}(T_{\textrm{div}})\}$}
\State {\color{gray}\% For the individuals outside the spine}
\State Generate $P \sim \textrm{PPP}_{\big[T_{\textrm{div}}[i-1],T_{\textrm{div}}[i]\big]}(K_{\textrm{loss}}(n+i-1)^2d(\cdot))$\Comment{Times of loss events}
	\State append $P$ to $T_{\textrm{loss}}$
	\State Generate $I_u \sim \left(\textrm{uniform}\left(\{1, \cdots, n+i-1\}\right)^{\textrm{length}(T_{\textrm{loss}}[i])}\right)$ \Comment{Distributing loss events}
	\State append $I_u$ to $I_{\textrm{loss}}$
	\State {\color{gray}\% For the spine}	
	\State Generate $P^* \sim \textrm{PPP}_{\big[T_{\textrm{div}}[i-1],T_{\textrm{div}}[i]\big]}((n+i-1)d(\cdot))$ \Comment{Times of loss events}
	\State append $P^*$ to $T^*_{\textrm{loss}}$ 
\EndFor
%\State {\color{gray}\% For the individuals outside the spine}
\State Generate $L_u \sim \left(\textrm{uniform}(0,1)^{\textrm{length}(T_{\textrm{loss}})}\right)$ \Comment{Generating fractions lost}
	\State $L_{\textrm{loss}} = F^{-1}_{\textrm{loss}}(L_u)$
%\State {\color{gray}\% For the spine}
\State Generate $L_u^* \sim \left(\textrm{uniform}(0,1)^{\textrm{length}(T_{\textrm{loss}}^*)}\right)$ \Comment{Generating fractions lost}
\State $L_{\textrm{loss}}^* = F^{*-1}_{\textrm{loss}}(L_u^*)$
\end{algorithmic}
\caption{Simulation algorithm of the jumps times until time $T$}\label{algo:main}
\end{figure}

The output tuple $(T_{\textrm{div}},I_{\textrm{div}},E,L_{\textrm{div}},T_{\textrm{loss}},T_{\textrm{loss}}^*, I_{\textrm{loss}}, L_{\textrm{loss}},L_{\textrm{loss}}^*)$ is the minimal information needed to construct the spinal process introduced in Section \ref{section: Yule}. The list $U$ of labels of the individuals can be computed with the function Labels($I_{\textrm{div}},n$). Notice that a lot of operations can be parallelized in this algorithm, unlike in the classical Lewis' algorithm. Moreover, depending on the statistic that one wants to compute on this process, the algorithm can be further simplified. For example, if one takes interest in the total biomass of the population, the indexes of the individuals that branched are not necessary. 

We recall that our statistical value of interest here is the mean size of an individual picked uniformly at random ion the population. To right change of measure from the spinal process is a function of the number of individuals in the population at every time $s \in [0, T]$ that can be computed from the time of branching events and the initial condition only, as established in Proposition \ref{prop size}. To compute this change of measure we also need the total biomass in the population at every time $s \in [0, T]$, that is performed using the Proposition ref{prop biomass}
\begin{proposition}\label{prop size}
Let $T >0$ and $T_{\text{div}} = \left(T^i_{\text{div}}, 0\leq i \leq N_{\text{div}}+1\right)$ be a sequence of increasing branching times with $T^0_{\text{div}}:=0$ and $T^{N_{\text{div}}+1}_{\text{div}} := T$. The size $\widehat{N}$  of the spinal population starting from $N_0$ individuals is such that 
$$
\int_0^T\widehat{N}_s\text{d}s = (N_0+N_{\text{div}})T-\sum_{i=0}^{N_{\text{div}}}T^i_{\text{div}}
$$
and
$$
\int_0^T\widehat{N}^2_s\text{d}s = \left(N_0+N_{\text{div}}\right)^2T-\left(2N_0-1\right)\sum_{i=1}^{N_{\text{div}}}T^i_{\text{div}}-2\sum_{i=1}^{N_{\text{div}}}iT^i_{\text{div}}.
$$
\end{proposition}
\begin{proof}
The first integral term is directly computed using that 
\begin{equation}\label{sequence 1}
\sum_{i=0}^{N_{\text{div}}}i(T_{\text{div}}^{i+1}-T_{\text{div}}^i) =N_{\text{div}}T- \sum_{i=0}^{N_{\text{div}}} T_{\text{div}}^i.
\end{equation}
The second integral term is computed by noticing that
$$
\int_0^T\widehat{N}^2_s\text{d}s = N_0^2T + 2N_0\sum_{i=0}^{N_{\text{div}}}i\left(T^{i+1}_{\text{div}}-T^i_{\text{div}}\right) + \sum_{i=0}^{N_{\text{div}}}i^2\left(T^{i+1}_{\text{div}}-T^i_{\text{div}}\right)
$$
and that 
\begin{equation}\label{sequence 2}
\sum_{i=0}^{N_{\text{div}}}i^2(T_{\text{div}}^{i+1}-T_{\text{div}}^i) =N_{\text{div}}^2T- \sum_{i=1}^{N{\text{div}}} (2i-1)T_{\text{div}}^i.
\end{equation}
Using \eqref{sequence 1} and \eqref{sequence 2} and rearranging the terms concludes the proof. 
\end{proof}

\begin{proposition}\label{prop biomass} For all $0\leq i\leq N_{\text{div}}$, let $\left(T_{\text{loss}}^{i,j}, 1\leq j\leq N^i_{\text{loss}}\right)$ be the sequence of loss events times in $[T_{\text{div}}^i, T_{\text{div}}^{i+1})$. If there is no loss event in this period of time, $N_{\text{loss}}^i = 0$ and we use the formalism $T_{\text{loss}}^{i,N_{\text{loss}}^i} = T_{\text{loss}}^{i,1} = T_{\text{div}}^{i-1}$. The fraction of lost masses are denoted by $\theta_{i,j}$ and the label of the individual that suffered the loss is denoted $(i,j)$. Then
\begin{multline*}
\int_0^TB_s\text{d}s = \left.\sum_{i=0}^{N_{\text{div}}}\right[\frac{B_{T_{\text{div}}^{i}}}{\mu}\left(e^{\mu \left(T_{\text{div}}^{i+1}-T_{\text{div}}^{i}\right)}-1\right) \\
-\left.\sum_{k=1}^{N^i_{\text{loss}}}\frac{1-\theta_{i,k}}{\mu}X^{(i,k)}_{T_{\text{div}}^{i}} \left(e^{\mu \left(T_{\text{div}}^{i+1}-T_{\text{div}}^{i}\right)}-e^{\mu \left(T_{\text{loss}}^{i,k}-T_{\text{div}}^{i}\right)}\right)\right].
\end{multline*}
\end{proposition}

\begin{proof}
We recall that $T^{N_{\text{div}}+1}_{\text{div}} = T$, and then
$$
\int_0^TB_s\text{d}s = \sum_{i=0}^{N_{\text{div}}}\left[\int_{T_{\text{div}}^{i}}^{T_{\text{loss}}^{i,1}}B_s\text{d}s + \sum_{j=2}^{N_{\text{loss}}^i}\int_{T_{\text{loss}}^{i,j-1}}^{T_{\text{loss}}^{i,j}}B_s\text{d}s + \int_{T_{\text{loss}}^{i,N^i_{\text{loss}}}}^{T_{\text{div}}^{i+1}}B_s\text{d}s\right].
$$
Furthermore, for all $0\leq i\leq N_{\text{div}}$, all $1\leq j\leq N_{\text{loss}}^i-1$ and all time $t\in \left(T_{\text{loss}}^{i,j},T_{\text{loss}}^{i,j+1}\right)$, the total biomass verifies
$$
B_{t}=\left(B_{T_{\text{div}}^{i}}-\sum_{k=1}^j(1-\theta_{i,k})X^{(i,k)}_{T_{\text{div}}^{i}}\right)\exp\left(\mu(t-T_{\text{div}}^{i}) \right).
$$
Thus we integrate on those intervals to get
\begin{align*}
\int_0^TB_s\text{d}s &= \left.\sum_{i=0}^{N_{\text{div}}}\right[\frac{B_{T_{\text{div}}^{i}}}{\mu}e^{-\mu  T_{\text{div}}^{i}} \left(e^{\mu T_{\text{loss}}^{i,1}}-e^{-\mu T_{\text{div}}^{i}}\right) \\
&  \qquad \qquad +\sum_{j=2}^{N_{\text{loss}}^i}
\left(\frac{B_{T_{\text{div}}^{i}}}{\mu} - \sum_{k=1}^{j-1}\frac{1-\theta_{i,k}}{\mu}X^{(i,k)}_{T_{\text{div}}^{i}} \right)e^{-\mu  T_{\text{div}}^{i}} \left(e^{\mu T_{\text{loss}}^{i,j}}-e^{\mu T_{\text{loss}}^{i,j-1}}\right)
\\
&\qquad \qquad +\left.\left(\frac{B_{T_{\text{div}}^{i}}}{\mu} - \sum_{k=1}^{N^i_{\text{loss}}}\frac{1-\theta_{i,k}}{\mu}X^{(i,k)}_{T_{\text{div}}^{i}} \right)e^{-\mu  T_{\text{div}}^{i}} \left(e^{\mu T_{\text{div}}^{i+1}}-e^{\mu T_{\text{loss}}^{i,N^i_{\text{loss}}}}\right)\right].
\end{align*}
Notice that for all $0\leq i\leq N_{\text{div}}$ the sum over $j$ is telescoping for the term in $B_{T_{\text{div}}^{i}}$ and thus
\begin{align*}
\int_0^TB_s\text{d}s &= \left.\sum_{i=0}^{N_{\text{div}}}\right[\frac{B_{T_{\text{div}}^{i}}}{\mu}e^{-\mu  T_{\text{div}}^{i}} \left(e^{\mu T_{\text{div}}^{i+1}}-e^{\mu T_{\text{div}}^{i}}\right) \\
&  \qquad \qquad -\sum_{j=2}^{N_{\text{loss}}^i}
\sum_{k=1}^{j-1}\frac{1-\theta_{i,k}}{\mu}X^{(i,k)}_{T_{\text{div}}^{i}} e^{-\mu  T_{\text{div}}^{i}} \left(e^{\mu T_{\text{loss}}^{i,j}}-e^{\mu T_{\text{loss}}^{i,j-1}}\right)
\\
&\qquad \qquad -\left.\sum_{k=1}^{N^i_{\text{loss}}}\frac{1-\theta_{i,k}}{\mu}X^{(i,k)}_{T_{\text{div}}^{i}}e^{-\mu  T_{\text{div}}^{i}} \left(e^{\mu T_{\text{div}}^{i+1}}-e^{-\mu T_{\text{loss}}^{i,N^i_{\text{loss}}}}\right)\right].
\end{align*}
Inverting the sums over $j$ and $k$ in the second line and using the fact that the sum over $j$ becomes telescoping we get
\begin{align*}
\int_0^TB_s\text{d}s &= \left.\sum_{i=0}^{N_{\text{div}}}\right[\frac{B_{T_{\text{div}}^{i}}}{\mu}e^{-\mu  T_{\text{div}}^{i}} \left(e^{\mu T_{\text{div}}^{i+1}}-e^{\mu T_{\text{div}}^{i}}\right) \\
&  \qquad \quad -\sum_{k=1}^{N_{\text{loss}}^i-1}\frac{1-\theta_{i,k}}{\mu}X^{(i,k)}_{T_{\text{div}}^{i}} e^{-\mu  T_{\text{div}}^{i}} 
\left(e^{\mu T_{\text{loss}}^{i,N^i_{\text{loss}}}}-e^{\mu T_{\text{loss}}^{i,k}}\right)
\\
&\qquad \quad -\left.\sum_{k=1}^{N^i_{\text{loss}}}\frac{1-\theta_{i,k}}{\mu}X^{(i,k)}_{T_{\text{div}}^{i}}e^{-\mu  T_{\text{div}}^{i}} \left(e^{\mu T_{\text{div}}^{i+1}}-e^{\mu T_{\text{loss}}^{i,N^i_{\text{loss}}}}\right)\right].
\end{align*}
We conclude the proof using the distributive property of the sum and rearranging the terms.
\end{proof}
Notice that in order to compute $B_{T_{\text{div}}^{i}}$, we need to know the past loss events of each individuals. Thus two approaches are possible: forward or backward. The backward method avoid the storage of the traits of each individuals but is computationally heavy as many calculations are performed multiple times. In fact at every branching event, the past events of the common ancestor are integrated twice. The forward method, on the other hand, avoid these multiplicities but need to store at every branching event the traits of every individual. According to our available RAM, we chose the forward method. This method is also more convenient as the computation of $\Pi_T$ is then straightforward.

%%%%%%%%%%%%%%%%%%%%%%%%%%%%%%%%%%%%%%%%%%%%%%%%%%%%%%%%%%%%%%%%%%%
%%                                                               %%
%% Supplementary Material, if any, should be provided in         %%
%% {supplement} environment  with title and short description.   %%
%%                                                               %%
%%%%%%%%%%%%%%%%%%%%%%%%%%%%%%%%%%%%%%%%%%%%%%%%%%%%%%%%%%%%%%%%%%%

%%%%%%%%%%%%%%%%%%%%%%%%%%%%%%%%%%%%%%%%%%%%%%%%%%%%%%%%%%%%%%%%%%%
%%                                                               %%
%% Use the two commands below for producing your bibliography    %%
%% with bibtex, then comment again the commands and include the  %%
%% content of the .bbl file in this file below the commands.     %%
%%                                                               %%
%%%%%%%%%%%%%%%%%%%%%%%%%%%%%%%%%%%%%%%%%%%%%%%%%%%%%%%%%%%%%%%%%%%

%\bibliographystyle{amsplain}
%\bibliography{sample_ejp}

\begin{thebibliography}{10}

\bibitem{Aldous01}
David~J. Aldous.
\newblock Stochastic models and descriptive statistics for phylogenetic trees,
  from {Y}ule to today.
\newblock {\em Statistical Science}, 16(1):23--34, 2001.

\bibitem{Athreya00}
Krishna~B. Athreya.
\newblock Change of measures for {M}arkov chains and the {LlogL} theorem for
  branching processes.
\newblock {\em Bernoulli}, 6(2):323--338, 2000.

\bibitem{B21}
Vincent Bansaye.
\newblock Spine for interacting populations and sampling.
\newblock {\em Bernoulli}, 30(2):1555 -- 1585, 2024.

\bibitem{BDMT2011}
Vincent Bansaye, Jean-François Delmas, Laurence Marsalle, and Viet~Chi Tran.
\newblock Limit theorems for {M}arkov processes indexed by continuous time
  {G}alton-{W}atson trees.
\newblock {\em The Annals of Applied Probability}, 21(6):2263--2314, 2011.

\bibitem{bansaye2015stochastic}
Vincent Bansaye and Sylvie M{\'e}l{\'e}ard.
\newblock {\em Stochastic models for structured populations}, volume~16.
\newblock Springer, 2015.

\bibitem{bertoin2018biggins}
Jean Bertoin and Bastien Mallein.
\newblock {Biggins’ martingale convergence for branching {L}{\'e}vy
  processes}.
\newblock {\em Electronic Communications in Probability}, 23:1 -- 12, 2018.

\bibitem{biggins2004measure}
J.~D. Biggins and A.~E. Kyprianou.
\newblock Measure change in multitype branching.
\newblock {\em Advances in Applied Probability}, 36(2):544--581, 2004.

\bibitem{Biggins77}
John~D Biggins.
\newblock Martingale convergence in the branching random walk.
\newblock {\em Journal of Applied Probability}, 14(1):25--37, 1977.

\bibitem{Brown80}
K.J Brown and S.S Lin.
\newblock On the existence of positive eigenfunctions for an eigenvalue problem
  with indefinite weight function.
\newblock {\em Journal of Mathematical Analysis and Applications},
  75(1):112--120, 1980.

\bibitem{CM07}
Nicolas Champagnat and Sylvie M{\'e}l{\'e}ard.
\newblock Invasion and adaptive evolution for individual-based spatially
  structured populations.
\newblock {\em Journal of Mathematical Biology}, 55:147--188, 2007.

\bibitem{Chauvin88}
Brigitte Chauvin and Alain Rouault.
\newblock {K-P-P} equation and supercritical branching brownian motion in the
  subcritical speed area. application to spatial trees.
\newblock {\em Probability theory and related fields}, 80(2):299--314, 1988.

\bibitem{Chauvin91}
Brigitte Chauvin, Alain Rouault, and Anton Wakolbinger.
\newblock Growing conditioned trees.
\newblock {\em Stochastic Processes and their Applications}, 39(1):117--130,
  1991.

\bibitem{cheek2023ancestral}
David Cheek and Samuel~GG Johnston.
\newblock Ancestral reproductive bias in branching processes.
\newblock {\em Journal of Mathematical Biology}, 86(5):70, 2023.

\bibitem{chetrite2015nonequilibrium}
Rapha{\"e}l Chetrite and Hugo Touchette.
\newblock Nonequilibrium {M}arkov processes conditioned on large deviations.
\newblock {\em Annales Henri Poincar{\'e}}, 16:2005--2057, 2015.

\bibitem{Cloez17}
Bertrand Cloez.
\newblock Limit theorems for some branching measure-valued processes.
\newblock {\em Advances in Applied Probability}, 49(2):549--580, 2017.

\bibitem{cohen1995population}
Joel~E Cohen.
\newblock Population growth and earth's human carrying capacity.
\newblock {\em Science}, 269(5222):341--346, 1995.

\bibitem{Coville10}
Jérôme Coville.
\newblock On a simple criterion for the existence of a principal eigenfunction
  of some nonlocal operators.
\newblock {\em Journal of Differential Equations}, 249(11):2921--2953, 2010.

\bibitem{Delmoral04}
Pierre Del~Moral.
\newblock {\em Feynman-Kac Formulae: Genealogical and Interacting Particle
  Systems with Applications}.
\newblock Springer New York, New York, NY, 2004.

\bibitem{Devroye86}
Luc Devroye.
\newblock {\em General Principles in Random Variate Generation}, pages 27--82.
\newblock Springer New York, New York, NY, 1986.

\bibitem{Durrett}
Rick Durrett.
\newblock {\em Probability: theory and examples}, volume~49.
\newblock Cambridge university press, 2019.

\bibitem{eckhoff2015spines}
Maren Eckhoff, Andreas~E. Kyprianou, and Matthias Winkel.
\newblock Spines, skeletons and the strong law of large numbers for
  superdiffusions.
\newblock {\em The Annals of Probability}, 43(5):2545--2610, 2015.

\bibitem{englander2007branching}
J{\'a}nos Engl{\"a}nder.
\newblock {Branching diffusions, superdiffusions and random media}.
\newblock {\em Probability Surveys}, 4:303 -- 364, 2007.

\bibitem{englander2010strong}
J{\'a}nos Engl{\"a}nder, Simon~C. Harris, and Andreas~E. Kyprianou.
\newblock {Strong Law of Large Numbers for branching diffusions}.
\newblock {\em Annales de l'Institut Henri Poincaré, Probabilités et
  Statistiques}, 46(1):279 -- 298, 2010.

\bibitem{FM04}
Nicolas Fournier and Sylvie M{\'e}l{\'e}ard.
\newblock {A microscopic probabilistic description of a locally regulated
  population and macroscopic approximations}.
\newblock {\em The Annals of Applied Probability}, 14(4):1880 -- 1919, 2004.

\bibitem{fritsch15}
Coralie Fritsch, Jérôme Harmand, and Fabien Campillo.
\newblock A modeling approach of the chemostat.
\newblock {\em Ecological Modelling}, 299:1--13, 2015.

\bibitem{Georgii03}
Hans-Otto Georgii and Ellen Baake.
\newblock Supercritical multitype branching processes: The ancestral types of
  typical individuals.
\newblock {\em Advances in Applied Probability}, 35(4):1090--1110, 2003.

\bibitem{git2007exponential}
Y.~Git, J.~W. Harris, and S.~C. Harris.
\newblock Exponential growth rates in a typed branching diffusion.
\newblock {\em The Annals of Applied Probability}, 17(2):609--653, 2007.

\bibitem{Hardy09}
Robert Hardy and Simon~C. Harris.
\newblock A spine approach to branching diffusions with applications to
  {L}p-convergence of martingales.
\newblock {\em S{\'e}minaire de Probabilit{\'e}s XLII}, 1979:281--330, 2009.

\bibitem{harris2020coalescent}
Simon~C. Harris, Samuel G.~G. Johnston, and Matthew~I. Roberts.
\newblock The coalescent structure of continuous-time {G}alton-{W}atson trees.
\newblock {\em arXiv preprint}, 1703.00299, 2019.

\bibitem{harris2017many}
Simon~C. Harris and Matthew~I. Roberts.
\newblock {The many-to-few lemma and multiple spines}.
\newblock {\em Annales de l'Institut Henri Poincaré, Probabilités et
  Statistiques}, 53(1):226 -- 242, 2017.

\bibitem{harris1996large}
Simon~C. Harris and David Williams.
\newblock Large-deviations and martingales for a typed branching diffusion, 1.
\newblock {\em Asterique}, 236:133--154, 1996.

\bibitem{harris1963theory}
Theodore~Edward Harris et~al.
\newblock {\em The theory of branching processes}, volume~6.
\newblock Springer Berlin, 1963.

\bibitem{henry2023time}
Beno{\^i}t Henry, Sylvie M{\'e}l{\'e}ard, and Viet~Chi Tran.
\newblock {Time reversal of spinal processes for linear and non-linear
  branching processes near stationarity}.
\newblock {\em Electronic Journal of Probability}, 28:1 -- 27, 2023.

\bibitem{johnston2019coalescent}
Samuel G.~G. Johnston and Amaury Lambert.
\newblock The coalescent structure of uniform and {P}oisson samples from
  multitype branching processes.
\newblock {\em arXiv preprint}, 1912.00198, 2021.

\bibitem{Kallenberg77}
Olav Kallenberg.
\newblock Stability of critical cluster fields.
\newblock {\em Mathematische Nachrichten}, 77(1):7--43, 1977.

\bibitem{KS66}
H.~Kesten and B.~P. Stigum.
\newblock A limit theorem for multidimensional {Galton-Watson} processes.
\newblock {\em The Annals of Mathematical Statistics}, 37(5):1211--1223, 1966.

\bibitem{KLPP97}
Thomas Kurtz, Russell Lyons, Robin Pemantle, and Yuval Peres.
\newblock A conceptual proof of the {Kesten-Stigum} theorem for multi-type
  branching processes.
\newblock {\em Classical and Modern Branching Processes}, 84:181--185, 1997.

\bibitem{kyprianou2004travelling}
A.E. Kyprianou.
\newblock Travelling wave solutions to the {K}-{P}-{P} equation: alternatives
  to {S}imon {H}arris' probabilistic analysis.
\newblock {\em Annales de l'Institut Henri Poincare (B) Probability and
  Statistics}, 40(1):53--72, 2004.

\bibitem{Lewis79}
P.~A.~W Lewis and G.~S. Shedler.
\newblock Simulation of nonhomogeneous {P}oisson processes by thinning.
\newblock {\em Naval Research Logistics Quarterly}, 26(3):403--413, 1979.

\bibitem{Liu09}
Rong-Li Liu, Yan-Xia Ren, and Renming Song.
\newblock Llog{L} criterion for a class of superdiffusions.
\newblock {\em Journal of Applied Probability}, 46(2):479--496, 2009.

\bibitem{Liu11}
Rong-Li Liu, Yan-Xia Ren, and Renming Song.
\newblock Llog{L} condition for supercritical branching {H}unt processes.
\newblock {\em Journal of Theoretical Probability}, 24(1):170--193, 2011.

\bibitem{Liu13}
Rong-Li Liu, Yan-Xia Ren, and Renming Song.
\newblock Strong {Law of Large Numbers} for a class of superdiffusions.
\newblock {\em Acta applicandae mathematicae}, 123(1):73--97, 2013.

\bibitem{Lyons97}
Russell Lyons.
\newblock A simple path to {B}iggins' martingale convergence for branching
  random walk.
\newblock {\em Classical and Modern Branching Processes}, 84:217--221, 1997.

\bibitem{LPP95}
Russell Lyons, Robin Pemantle, and Yuval Peres.
\newblock Conceptual proofs of {LLogL} criteria for mean behavior of branching
  processes.
\newblock {\em The Annals of Probability}, 23(3):1125--1138, 1995.

\bibitem{Marguet19}
Aline Marguet.
\newblock {Uniform sampling in a structured branching population}.
\newblock {\em Bernoulli}, 25(4A):2649 -- 2695, 2019.

\bibitem{miles1970homogeneous}
R.E. Miles.
\newblock On the homogeneous planar {P}oisson point process.
\newblock {\em Mathematical Biosciences}, 6:85--127, 1970.

\bibitem{Neveu86}
J.~Neveu.
\newblock Arbres et processus de {Galton-Watson}.
\newblock {\em Annales de l'I.H.P. Probabilit\'es et statistiques},
  22(2):199--207, 1986.

\bibitem{Ogata81}
Y.~Ogata.
\newblock On {L}ewis' simulation method for point processes.
\newblock {\em IEEE Transactions on Information Theory}, 27(1):23--31, 1981.

\bibitem{REES2018121}
Paul~A. Rees.
\newblock {\em Chapter 5 - Population Growth}, pages 121--137.
\newblock Academic Press, 2018.

\bibitem{ren2016spine}
Yan-Xia Ren, Renming Song, and Ting Yang.
\newblock Spine decomposition and {L}log{L} criterion for superprocesses with non-local branching mechanisms.
\newblock {\em arXiv preprint}, 1609.02257, 2020.

\bibitem{github}
\url{https://github.com/charles-medous/Spinal-method-for-Yule-model}.
\end{thebibliography}

% add below the content of your .bbl file produced by bibtex.

%%%%%%%%%%%%%%%%%%%%%%%%%%%%%%%%%%%%%%%%%%%%%%%%%%%%%%%%%%%%%%%%%%%
\textbf{Acknowledgments.}\\
I would like to thank L. Coquille, A. Marguet and C. Smadi for their useful advises and comments during this work. I thank V. Bansaye for stimulating discussions on the subject of this paper and S. Billiard for fruitful discussions on the Yule model. \\
This work is supported by the French National Research Agency in the framework of the "France 2030" program (ANR-15-IDEX-0002) and by the LabEx PERSYVAL-Lab (ANR-11-LABX-0025-01). I acknowledge partial support by the Chair "Modélisation Mathématique et Biodiversité" of VEOLIA-Ecole Polytechnique-MNHN-F.X.

\end{document}